\documentclass{amsart}
\usepackage{tikz-cd}
\usepackage{amsmath}
\usepackage{amsfonts}
\usepackage{mathtools}
\usepackage{amsmath,amssymb,amsthm,mathrsfs}
\usepackage[utf8]{inputenc}
\usepackage{hyperref}
\usepackage{enumitem}
\usepackage[T1]{fontenc}

\theoremstyle{plain} 
\newtheorem{theorem}{Theorem}
\numberwithin{theorem}{section}
\newtheorem*{theorem*}{Theorem}
\newtheorem{prop}[theorem]{Proposition}
\newtheorem{prop-def}[theorem]{Proposition-Definition}
\newtheorem{lemma}[theorem]{Lemma}
\newtheorem{coro}[theorem]{Corollary}
\newtheorem{conj}[theorem]{Conjecture}
\newtheorem{definition}[theorem]{Definition}
\theoremstyle{definition}

\newtheorem*{example}{Example}
\theoremstyle{remark} 
\newtheorem{remark}[theorem]{Remark}
\theoremstyle{definition} 
\newtheorem*{ack}{Acknowledgment}


\newcommand{\pa}{\partial}

\newcommand{\RM}{\backslash}
\newcommand{\be}{\begin{equation} }
\newcommand{\ee}{\end{equation} }

\usepackage{color}


\newcommand{\bbC}{\mathbb{C}}
\newcommand{\C}{\mathbb{C}} 
\newcommand{\D}{\mathbb{D}}

\renewcommand{\L}{\mathbb{L}}

\renewcommand{\P}{\mathbb{P}}
\newcommand{\Q}{\mathbb{Q}}
\newcommand{\R}{\mathbb{R}}

\newcommand{\Z}{\mathbb{Z}}

\newcommand{\cH}{\mathcal{H}}

\newcommand{\cL}{\mathcal{L}}
\newcommand{\cM}{\mathcal{M}}
\newcommand{\cN}{\mathcal{N}}

\newcommand{\cP}{\mathcal{P}}

\newcommand{\rarr}{\longrightarrow}

\newcommand{\gr}{\textup{gr}}

\newcommand{\sO}{\mathscr{O}}

\newcommand{\sI}{\mathscr{I}}
\newcommand{\sA}{\mathscr{A}}

\newcommand{\shTA}{\mathscr{T}}
\newcommand{\shD}{\mathscr{D}}

\newcommand{\Spec}{\textup{Spec }}

\newcommand{\an}{\textup{an}}

\newcommand{\supp}{\textup{supp}}

\newcommand{\bs}{{\bf s}}
\newcommand{\bw}{{\bf w}}
\newcommand{\ba}{{\bf a}}
\def\bal{{\boldsymbol{\alpha}}}

\newcommand{\bb}{{\bf b}}
\newcommand{\bc}{{\bf c}}
\newcommand{\bee}{{\bf e}}
\newcommand{\bk}{{\bf k}}

\newcommand{\bg}{{\bf g}}

\newcommand{\bff}{{\bf f}}

\newcommand{\rel}{{\textup{rel}}}
\newcommand{\Ann}{{\textup{Ann}}}
\newcommand{\Chr}{{\textup{Ch}^\rel}}
\newcommand{\Ch}{\textup{Ch}}
\newcommand{\CC}{\textup{CC}}
\newcommand{\Ext}{\mathcal Ext}

\title{Bernstein-Sato ideals and hyperplane arrangements}
\author{Lei Wu}
\address{Lei Wu, Department of Mathematics, University of Utah.
155 S 1400 E,  Salt Lake City, UT 84112, USA}
\email{lwu@math.utah.edu}


\subjclass[2010]{14F10, 32S22, 14C17}

\begin{document}

\maketitle

\begin{abstract}
    We study the relation between zero loci of Bernstein-Sato ideals and roots of $b$-functions and obtain a criterion to guarantee that roots of $b$-functions of a reducible polynomial are determined by the zero locus of the associated Bernstein-Sato ideal. Applying the criterion together with a result of Maisonobe we prove that the set of roots of the $b$-function of a free hyperplane arrangement is determined by its intersection lattice.
    
    We also study the zero loci of Bernstein-Sato ideals and the associated relative characteristic cycles for arbitrary central hyperplane arrangements. We prove the multivariable $n/d$ conjecture of Budur for complete factorizations of arbitrary hyperplane arrangements, which in turn proves the strong monodromy conjecture for the associated multivariable topological zeta functions.  
\end{abstract}

\section{Introduction}
\subsection{Bernstein-Sato ideals and diagonal specialization}
Let $X$ be a smooth algebraic variety over $\bbC$ (or a complex manifold) and let $\bff=(f_1,f_2,\dots,f_r)$ be an $r$-tuple of regular functions (resp. holomorphic functions) on $X$. 
Then the Bernstein-Sato ideal of $\bff$, denoted by $B_\bff$, is the ideal in $\bbC[s_1,\dots,s_r]$ generated by $b(\bs)$ satisfying 
\be\label{eq:feqbsi}
b(\bs)\bff^\bs=P\cdot \bff^{\bs+\mathbf 1_r}\ee
for $P\in \shD_X[\bs]=\shD_X\otimes_\bbC \bbC[\bs]$,  where $\bs=(s_1,s_2,\dots,s_r)$, $\bff^\bs=\prod_i f_i^{s_i}$ and $\mathbf1_r=(\underbrace{1,1,\dots,1}_r)$. Or equivalently, the Bernstein-Sato ideal of $\bff$ is the ideal of $\bbC[\bs]$-module annihilators for $\shD_X[\bs]\bff^{\bs}/\shD_X[\bs]\bff^{\bs+\mathbf1_r}$ (see \S\ref{sec:rr}). When $r=1$, the generator of the Bernstein-Sato ideal in $\bbC[s]$, denoted by $b_f(s)$, is called the Bernstein-Sato polynomial (or $b$-function) of the regular function (since $\bbC[s]$ is a PID).

We consider the diagonal embedding 
\[\delta\colon \bbC\hookrightarrow \bbC^r \quad s\mapsto (s,s,\dots,s).\]
Using the equation \eqref{eq:feqbsi}, we have that if $b(\bs)\in B_\bff$, then $b(s,s,\dots,s)\in B_f$, where $f=\prod_{i=1}^rf_i$, which gives an inclusion of zero loci,
\be \label{eq:keyinc}
Z(B_f)\subseteq\delta^{-1}(Z(B_\bff)).
\ee
This process is called the specialization of Bernstein-Sato ideals, see \cite[\S 4.21]{Budur} for more information.

Our main theorem is a criterion to ensure \eqref{eq:keyinc} being an equality.
\begin{theorem}\label{thm:maindiasp}
With notations as above, if $\shD_X[\bs]\bff^\bs$ is Cohen-Macaulay over $\shD_X[\bs]$ $($cf. Definition \ref{def:purecm}$)$, then 
\[\delta^{-1}(Z(B_\bff))= Z(B_f).\]
\end{theorem}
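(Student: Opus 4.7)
My plan is to prove the reverse inclusion $\delta^{-1}(Z(B_\bff)) \subseteq Z(B_f)$; the other direction is already given. Set $N := \shD_X[\bs]\bff^\bs$, $N_1 := \shD_X[\bs]\bff^{\bs+\mathbf{1}_r}$ and $M := N/N_1$, so that $B_\bff = \Ann_{\bbC[\bs]}(M)$ by the equivalent module-theoretic description recalled in the introduction; set likewise $M' := \shD_X[s]f^s/\shD_X[s]f^{s+1}$, so $B_f = \Ann_{\bbC[s]}(M')$. Let $I = (s_1-s_r,\ldots,s_{r-1}-s_r) \subset \bbC[\bs]$ be the ideal cutting out the diagonal $\delta(\bbC)$. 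The strategy is to identify $M/IM$ with $M'$ and then to compare annihilators.

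First I would identify the diagonal specialization with the ``univariate'' module. The substitution $s_i \mapsto s$ induces a natural $\shD_X[s]$-linear surjection $N/IN \twoheadrightarrow \shD_X[s]f^s$ carrying the class of $\bff^\bs$ to $f^s$, and similarly $N_1/IN_1 \twoheadrightarrow \shD_X[s]f^{s+1}$. I would promote these to isomorphisms, and then deduce $M/IM \cong M'$ by tensoring the short exact sequence $0 \to N_1 \to N \to M \to 0$ with $\bbC[s] = \bbC[\bs]/I$ over $\bbC[\bs]$. For this one needs $\mathrm{Tor}^{1}_{\bbC[\bs]}(M,\bbC[s]) = 0$, which follows once $\mathrm{Tor}^{>0}_{\bbC[\bs]}(N,\bbC[s]) = 0$ (and the analogous statement for $N_1$).

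The Cohen-Macaulay hypothesis on $N$ (cf.\ Definition \ref{def:purecm}) is precisely what I would use for this Tor-vanishing. In standard $\shD$-module terms, CM means $N$ has a pure characteristic cycle of expected grade in $T^*X \times \Spec\bbC[\bs]$, and this purity should force $s_1-s_r,\ldots,s_{r-1}-s_r$ to act as a regular sequence on $N$ after passing to the associated graded for a good filtration. With the Tor-vanishing in hand, a standard commutative-algebra computation on the coherent module $M/IM$ gives
\[
V(\Ann_{\bbC[s]}(M/IM)) \;=\; V(\Ann_{\bbC[\bs]}(M) + I) \;=\; V(\Ann_{\bbC[\bs]}(M)) \cap V(I) \;=\; \delta^{-1}(Z(B_\bff))
\]
set-theoretically, and combining this with $\Ann_{\bbC[s]}(M') = B_f$ from Step 1 yields $Z(B_f) \supseteq \delta^{-1}(Z(B_\bff))$.

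The main obstacle I anticipate is the Cohen-Macaulay step: translating the intrinsic CM-ness of $\shD_X[\bs]\bff^\bs$ over the noncommutative filtered ring $\shD_X[\bs]$ into the concrete commutative-algebraic statement that $(s_1-s_r,\ldots,s_{r-1}-s_r)$ is regular on $N$ and the resulting Tor groups vanish. Purity of the characteristic cycle together with a proper-intersection argument, showing that the diagonal meets the projection of the characteristic cycle of $N$ in the expected codimension, should be the key technical input, and this is the only place where the CM hypothesis is genuinely consumed.
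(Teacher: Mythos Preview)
Your overall strategy matches the paper's: use the Cohen--Macaulay hypothesis to control the derived restriction to the diagonal and then compare annihilators. However, two steps in your sketch are too optimistic, and they are exactly where the paper does real work.

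First, Tor-vanishing on $N$ and $N_1$ over $\bbC[\bs]$ does not by itself promote the surjection $N/IN \twoheadrightarrow \shD_X[s]f^s$ to an isomorphism. Injectivity amounts to showing that $N/IN \to j_*(\sO_U[\bs]\bff^\bs)\otimes_{\bbC[\bs]}\bbC[s]$ is injective, i.e.\ that $\mathrm{Tor}^{\bbC[\bs]}_1\bigl(j_*(\sO_U[\bs]\bff^\bs)/N,\,\bbC[s]\bigr)=0$; this is a statement about the \emph{cokernel}, and it does not follow from the long exact sequence once you know $\mathrm{Tor}_{>0}(N,\bbC[s])=0$. The paper handles this (inductively, one diagonal hyperplane $s_{r-1}=s_r$ at a time) via a $3\times 3$ diagram together with a direct-limit argument: the CM hypothesis forces each subquotient $\cM^{\bk,\bk-\mathbf 1}_{\bff}$ to be $(n+1)$-CM, a lemma from \cite{BVWZ} then makes multiplication by $s_{r-1}-s_r$ injective on each piece, hence $\Delta^*(\shD_X[\bs]\bff^{\bs-\bk+\mathbf 1})\hookrightarrow \Delta^*(\shD_X[\bs]\bff^{\bs-\bk})$ for all $k$, and passing to the limit gives the embedding into $\Delta^*(j_*(\sO_U[\bs]\bff^\bs))$.

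Second, and more seriously, your displayed identity $V(\Ann_{\bbC[s]}(M/IM)) = V(\Ann_{\bbC[\bs]}(M)+I)$ is \emph{not} standard commutative algebra here: $M$ is coherent over $\shD_X[\bs]$ but not finitely generated over $\bbC[\bs]$, so neither the equality $\supp_R(M)=V(\Ann_R M)$ nor the Nakayama-type implication $M_m\neq 0\Rightarrow (M/IM)_m\neq 0$ is available for free. The paper's substitute is to pass through relative characteristic varieties: for relative holonomic modules one has $Z(B_\cM)=p_2(\Chr(\cM))$ (Lemma~\ref{lm:suppzbf}), and the CM hypothesis is used a \emph{second} time, now on $\cM^{-\mathbf 1_r}_\bff$, to obtain a good filtration on which $s_{r-1}-s_r$ acts injectively even at the graded level, yielding $\Chr(\cM^{-\mathbf 1_{r-1}}_{\bg}) = \Chr(\cM^{-\mathbf 1_r}_{\bff})\big|_{s_{r-1}=s_r}$. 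Projecting by $p_2$ then gives the equality of zero loci. Your proper-intersection intuition is the right one, but it must be implemented on $\Chr$ inside $T^*X\times\bbC^r$ rather than on annihilator ideals in $\bbC[\bs]$.
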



\subsection{Hyperplane arrangements}
A {\em hyperplane arrangement} $D$ is a finite collection of hyperplanes in $X=\bbC^n$, that is, 
\[D=\{D_1,D_2,\dots, D_p\}\]
where $D_j$ are hyperplanes in $X$. Let $f_j$ be the linear polynomial defining $D_j$ and $f=\prod_{j=1}^p f_j^{m_j}$ with $m_j\ge 1$ so that the support of the divisor $(f=0)$ is $D$. We also call $f$ a hyperplane arrangement. We set $f_D=\prod_j f_j$.
We write by $L(D)$ the intersection lattice of a hyperplane arrangement $D$, that is, 
\[L(H)=\{\bigcap_{H\in B}H \mid B\subseteq D\}.\]
Elements in $L(D)$ are called \emph{edges}. For $W\in L(D)$, we write 
$$D_W=\{H\in D\mid W\subseteq H\}, D^W=\{H/W\mid H\in D_W\}$$
and $J(W)=\{j\in\{1,2,\dots,p\}\mid W\subset D_j\}.$ By definition, $D^W$ is a hyperplane arrangement in the quotient space $X/W$. 
The rank of $W$, denoted by rank$(W)$, is the codimension of $W$ in $X$. We set 
\[R_W=\{-\dfrac{\textup{rank}(D_W)}{|J(W)|},-\dfrac{\textup{rank}(D_W)+1}{|J(W)|},\dots,-\dfrac{2J(W)-\textup{rank}(D_W)}{|J(W)|}\}.\]

A hyperplane arrangement $D$ is {\em central} if $f_D$ is homogeneous and it is {\em essential} if $\{0\}\in L(D)$. A central hyperplane arrangement is {\em irreducible} if there is no linear change of coordinates on $\C^n$ such that $f$ can be written as the product of two non-constant polynomials in disjoint sets of variables. An edge $W\in L(D)$ is called \emph{dense} if $D_W$ is irreducible. A central hyperplane arrangement $D$ is \emph{free} if the underlying divisor is \emph{free} in the sense of K. Saito \cite{KSaito} (cf. Definition \ref{def:freediv}).

It is proved by Walther \cite{Wal} that the $b$-function of a hyperplane arrangement is not governed by its intersection lattice. By using Theorem \ref{thm:maindiasp} as well as Maisonobe's formula for the generator of the Bernstein-Sato ideal of a free hyperplane arrangement (see Theorem \ref{thm:mainBffha}), we obtain:  
\begin{theorem}\label{thm:mainrootoffha}
With notations as above, if $D$ is a free hyperplane arrangement, then 
\[Z(B_{f_D})=\bigcup_{\textup{dense } W \in L(D)} R_W,\]
where the union goes over all dense edges in $L(D)$.
\end{theorem}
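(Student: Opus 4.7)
The plan is to combine the diagonal specialization criterion of Theorem \ref{thm:maindiasp} with Maisonobe's formula for $B_\bff$ in the free case (Theorem \ref{thm:mainBffha}). Set $\bff=(f_1,\dots,f_p)$ so that $f_D=f_1\cdots f_p$, and consider the diagonal embedding $\delta\colon \bbC\into \bbC^p$, $s\mapsto(s,\dots,s)$. The strategy is to pass from the explicit factorization of a generator of $B_\bff$ to the set of roots of $b_{f_D}(s)$ via this diagonal.

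The first step is to verify the hypothesis of Theorem \ref{thm:maindiasp}, namely that $\shD_X[\bs]\bff^\bs$ is Cohen-Macaulay over $\shD_X[\bs]$ whenever $D$ is free. My approach would be to exhibit a Koszul-type resolution of this module built from a basis of the module of logarithmic derivations afforded by Saito's criterion for freeness; the freeness of $D$ should provide enough control over the relations among the partial Euler-type operators annihilating $\bff^\bs$ to force the required purity of the associated characteristic cycle. Granting this, Theorem \ref{thm:maindiasp} yields the key identity
\[
Z(B_{f_D}) = \delta^{-1}\bigl(Z(B_\bff)\bigr).
\]

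By Maisonobe's formula, a generator of $B_\bff$ factors as a product of affine linear forms indexed by the dense edges: for each dense $W\in L(D)$ one gets factors
\[
\prod_{k=\textup{rank}(D_W)}^{2|J(W)|-\textup{rank}(D_W)}\Bigl(\sum_{j\in J(W)}s_j+k\Bigr),
\]
so $Z(B_\bff)$ is a finite union of affine hyperplanes in $\bbC^p$ indexed by dense edges. Under $\delta$, the hyperplane $\sum_{j\in J(W)}s_j+k=0$ collapses to $|J(W)|\,s+k=0$, i.e., to the single point $s=-k/|J(W)|$. As $k$ runs over the stated interval of integers, these points trace out exactly $R_W$, and taking the union over all dense edges yields the desired description of $Z(B_{f_D})$.

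The principal obstacle is the Cohen-Macaulay verification in the first step; the rest of the argument amounts to a direct diagonal substitution in Maisonobe's formula, and the numerology of $R_W$ is set up precisely so that the specialization matches term by term.
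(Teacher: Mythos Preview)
Your proposal is correct and follows essentially the same route as the paper: combine Theorem~\ref{thm:maindiasp} with Maisonobe's formula (Theorem~\ref{thm:mainBffha}), after checking the Cohen--Macaulay hypothesis via the logarithmic Spencer/Koszul resolution for free quasi-homogeneous divisors (this is exactly Theorem~\ref{thm:freecm}, due to Maisonobe). The only imprecision is that you phrase the goal of the resolution step as ``purity of the characteristic cycle'' when what is actually needed (and what the length-$n$ Spencer resolution delivers) is $n$-Cohen--Macaulayness of $\shD_X[\bs]\bff^\bs$; otherwise your diagonal substitution and the $R_W$ numerology match the paper term by term.
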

Recently, Bath obtained the same formula in \cite[Theorem 1.4]{Bath} to calculate roots of $b$-functions for free hyperplane arrangements $D$ but from a different method. In \emph{loc. cit.} Bath also generalized Maisonobe's formula to possibly non-reduced free hyperplane arrangements. One can also generalize Theorem \ref{thm:mainrootoffha} for possibly non-reduced free hyperplane arrangements by using Theorem \ref{thm:maindiasp}; see \cite[Eq.(4.24)]{Bath}.

The above theorem says nothing about the multiplicities. However, M. Saito \cite{SaitoHyp} proved that the multiplicity of the root $s=-1$ of the $b$-function for a central essential hyperplane arrangement is $n$.  Furthermore, Terao's famous conjecture predicts that the freeness of a hyperplane arrangement is determined from its intersection lattice. Motivated by Theorem \ref{thm:mainrootoffha}, we conjecture the same phenomenon happening for $b$-functions of free hyperplane arrangements.
\begin{conj}
The $b$-function of a free hyperplane arrangement is determined from its intersection lattice.
\end{conj}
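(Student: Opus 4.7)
Theorem \ref{thm:mainrootoffha} already reduces the conjecture to showing that the \emph{multiplicities} of the roots of $b_{f_D}(s)$, not merely the root set, are determined by $L(D)$. The plan is to combine Maisonobe's explicit generator of $B_\bff$ for free arrangements (Theorem \ref{thm:mainBffha}) with a multiplicity-sensitive strengthening of Theorem \ref{thm:maindiasp}. Maisonobe's formula writes a generator $b_\bff(\bs)$ as a product of linear forms $\sum_{j\in J(W)}s_j+c$ indexed by dense edges $W\in L(D)$ and integers $c$ in a combinatorially determined range, so the diagonal specialization $b_\bff(s,\dots,s)$ is a product of linear factors $|J(W)|s+c$ whose multiplicity distribution depends only on $L(D)$. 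Since the defining equation \eqref{eq:feqbsi} immediately implies $b_{f_D}(s)$ divides $b_\bff(s,\dots,s)$, the conjecture would follow once one shows equality up to scalar --- equivalently, that the image of $B_\bff$ under the surjection $\bbC[\bs]\to \bbC[s]$, $s_i\mapsto s$, equals $B_{f_D}$ as ideals in $\bbC[s]$, not merely as subsets of $\bbC$.

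The first step is to verify that free hyperplane arrangements satisfy the Cohen-Macaulay hypothesis of Theorem \ref{thm:maindiasp}; this ought to be extractable from Maisonobe's construction, since the explicit presentation of $\shD_X[\bs]\bff^\bs$ underlying the formula for $B_\bff$ should exhibit Cohen-Macaulayness directly. The second and main step is to upgrade Theorem \ref{thm:maindiasp} from reduced zero loci to a scheme-theoretic equality of ideals. The natural tool is a Koszul-type argument against the regular sequence $s_1-s_2,\dots,s_{r-1}-s_r$ cutting out the diagonal in $\bbC^r$: under Cohen-Macaulayness one would try to show this sequence is regular on $\shD_X[\bs]\bff^\bs/\shD_X[\bs]\bff^{\bs+\mathbf 1_r}$, and that passage to $\bbC[\bs]$-annihilators commutes with this quotient. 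Alternatively, one could aim to read off multiplicities directly via the relative characteristic cycle machinery developed elsewhere in the paper.

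The principal obstacle is precisely this ideal-theoretic refinement of Theorem \ref{thm:maindiasp}. The Cohen-Macaulay condition is calibrated to codimension-one information --- enough to recover the support of $B_\bff$ but not, a priori, the length of the relevant quotient module along the diagonal. Overcoming this likely requires showing that the diagonal $\delta(\bbC)\subset\bbC^r$ meets the support of $\shD_X[\bs]\bff^\bs/\shD_X[\bs]\bff^{\bs+\mathbf 1_r}$ properly (and ideally transversally at generic points); this is genuinely new input that does not follow from freeness alone, and will likely have to be extracted from the combinatorics of $L(D)$ via Maisonobe's presentation. As a sanity check before attempting the general multiplicity statement, one should first verify that M.\ Saito's formula --- the multiplicity of $s=-1$ in $b_{f_D}$ equals $n$ for essential central arrangements --- agrees with the specialization count predicted by Maisonobe's formula; a mismatch here would already signal that additional factors are needed in the generator, while agreement would be strong evidence for the proposed strengthening.
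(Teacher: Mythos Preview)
The paper does not prove this statement; it is explicitly presented as an open conjecture, motivated by Theorem~\ref{thm:mainrootoffha} and by analogy with Terao's conjecture. There is therefore nothing in the paper to compare your proposal against.

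That said, your proposed strategy has a concrete obstruction, and it is exactly the one your own sanity check would detect. Take three generic lines through the origin in $\bbC^2$, say $f_D=xy(x+y)$. The dense edges are the three lines (rank~$1$, $|J|=1$) and the origin (rank~$2$, $|J|=3$). Maisonobe's generator of $B_{\bff_D}$ is
\[
(s_1+1)(s_2+1)(s_3+1)\,(s_1+s_2+s_3+2)(s_1+s_2+s_3+3)(s_1+s_2+s_3+4),
\]
whose diagonal specialization is, up to a constant, $(s+1)^4(s+\tfrac{2}{3})(s+\tfrac{4}{3})$. But by M.~Saito's result the multiplicity of $s=-1$ in $b_{f_D}(s)$ is $n=2$, not $4$. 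Thus the image of $B_\bff$ under $\bbC[\bs]\to\bbC[s]$ is strictly contained in $B_{f_D}$, and no scheme-theoretic strengthening of Theorem~\ref{thm:maindiasp} can yield equality of ideals. The discrepancy is structural: distinct hyperplanes $(L\cdot\bs+\alpha=0)$ in $Z(B_\bff)$ with different slopes $L$ can collapse under $\delta$ to the same point of $\bbC$, and each contributes its own factor to the specialized polynomial.

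In short, the diagonal specialization of Maisonobe's generator is a lattice-combinatorial upper bound for $b_{f_D}(s)$ that is generically too large. Proving the conjecture would require a separate mechanism, also combinatorial in $L(D)$, that identifies which of these redundant factors must be stripped away; nothing in the Cohen--Macaulay machinery of \S\ref{sec:spe} supplies that.
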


Suppose that $f$ is a central hyperplane arrangement, not necessarily reduced. An $r$-tuple $\bff=(f_1,f_2,\dots,f_r)$ is called a factorization of $f$ if each $f_j$ is a non-empty central hyperplane arrangement and $\prod_{j=1}^rf_j=f$. It is called a complete factorization if moreover each $f_j$ is linear (hence deg$f=r$). For a complete factorization $\bff=(f_1,f_2,\dots,f_r)$, we set 
\[J(W,\bff)=\{j\in \{1,2,\dots,r\}\mid W\subset(f_j=0)\}.\]
Our next result is about the zero loci of Bernstein-Sato ideals for arbitrary central hyperplane arrangements.
\begin{theorem}\label{thm:maingndcf}
Suppose that $\bff=(f_1,f_2,\dots,f_r)$ is a complete factorization of a central hyperplane arrangement $f$ $($possibly nonreduced$)$. 
If $W$ is a dense edge in its intersection lattice, then 
\[(\sum_{j\in J(W,\bff)}s_j+\textup{rank}(W)+k=0)\subseteq Z(B_\bff).\]
for all $k=0,1,\dots, |J(W,\bff)|-1$
\end{theorem}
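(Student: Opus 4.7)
The plan is to localize at a generic point of the dense edge $W$, reducing to the case of a central essential irreducible arrangement, and then to exhibit the required hyperplanes as components of $Z(B_\bff)$ via the relative characteristic cycle of $\shD_X[\bs]\bff^\bs$ developed earlier in the paper, together with the Euler identity. Throughout write $\kappa=\textup{rank}(W)$ and $J=J(W,\bff)$; the integer shift appearing in the theorem's range $0,1,\dots,|J(W,\bff)|-1$ will be denoted $m$, to avoid confusion with $\kappa$.

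\emph{Reduction to the essential irreducible case.} Choose linear coordinates $y_1,\dots,y_n$ on $\bbC^n$ with $W=\{y_1=\cdots=y_\kappa=0\}$. Then each $f_j$ with $j\in J$ is a linear form in $y_1,\dots,y_\kappa$, whereas each $f_j$ with $j\notin J$ is invertible in an analytic neighborhood of a generic point $p\in W$. Shrinking to such a neighborhood and observing that multiplying entries of $\bff$ by units neither changes the cyclic module $\shD_X[\bs]\bff^\bs$ nor the effect of the shift $\bs+\mathbf1_r$ (since $u^{s+1}/u^s=u$ is a unit), one gets $B_\bff=B_{\bff_W}\cdot\bbC[\bs]$, where $\bff_W:=(f_j)_{j\in J}$, and hence
\[
Z(B_{\bff_W})\times\bbC^{r-|J|}\ \subseteq\ Z(B_\bff).
\]
Because $\bff_W$ is pulled back along the smooth projection $X\to X/W\cong\bbC^\kappa$, its Bernstein-Sato ideal coincides with that of the induced central arrangement $\bff^W$ in $\bbC^\kappa$, which is essential and irreducible by the density of $W$. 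It therefore suffices to prove the following: if $\bff=(f_1,\dots,f_r)$ is a central essential irreducible complete factorization of rank $n$, then $\sum_{j=1}^{r} s_j+n+m=0$ lies in $Z(B_\bff)$ for every $m=0,1,\dots,r-1$.

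\emph{Relative characteristic cycle and Euler identity.} In this reduced setup the Euler vector field $E=\sum_{i=1}^{n}y_i\partial_{y_i}$ satisfies $E\bff^\bs=(s_1+\cdots+s_r)\bff^\bs$ and $E\bff^{\bs+\mathbf1_r}=(s_1+\cdots+s_r+r)\bff^{\bs+\mathbf1_r}$, so the operator $E-\sum_j s_j$ acts on $M:=\shD_X[\bs]\bff^\bs/\shD_X[\bs]\bff^{\bs+\mathbf1_r}$ with integer spectrum. Now invoke the relative characteristic cycle of $\shD_X[\bs]\bff^\bs$ on $T^*X\times\Spec\bbC[\bs]$ from the earlier sections: the origin is a dense edge, and its conormal $T^*_0X$ contributes a component whose image under the projection to $\Spec\bbC[\bs]$ is a union of parallel hyperplanes. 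Since $Z(B_\bff)$ equals the $\bbC[\bs]$-support of $M$, each such hyperplane belongs to $Z(B_\bff)$, and the Euler identity forces the translates to have the form $\sum_j s_j+n+m=0$ for $m\in\bbZ$.

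\emph{Range of shifts and main obstacle.} The crux is proving that the integer shifts exhaust the range $m=0,1,\dots,r-1$, i.e.\ that $r$ distinct parallel hyperplanes appear rather than fewer. This amounts to a multiplicity count for the conormal component of $T^*_0X$ in the relative characteristic cycle, controlled by the length of the appropriate graded pieces $\gr^V$ of the Kashiwara-Malgrange $V$-filtration along $\prod_j f_j=0$ at the origin. One can either perform this $V$-filtration computation directly, in the spirit of M.~Saito \cite{SaitoHyp} and Bath \cite{Bath}, or embed $\bff$ into a free central arrangement sharing the same deepest dense edge, read off the factor $\prod_{m=0}^{r-1}(\sum_j s_j+n+m)$ from Maisonobe's formula (Theorem~\ref{thm:mainBffha}) for the Bernstein-Sato generator of the free enlargement, and descend the resulting linear components back to $Z(B_\bff)$ via a semicontinuity property of the relative characteristic cycle under such deformations. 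The latter descent is the most delicate part, since an \emph{a priori} accidental cancellation of factors must be ruled out; this is where density of $W$ enters decisively, ensuring that the Orlik-Solomon contribution at $W$ is nonzero and hence that all $|J|$ shifts survive.
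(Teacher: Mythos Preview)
Your reduction step to the essential irreducible case is correct and matches the paper's argument. The invocation of the Euler identity is also in the right spirit. However, the heart of the proof --- establishing that \emph{all} of the hyperplanes $\sum_j s_j+n+m=0$ for $m=0,\dots,r-1$ actually lie in $Z(B_\bff)$ --- is not carried out, and both of the strategies you sketch for it have real problems. The $V$-filtration route is simply not executed. The ``embed in a free arrangement and descend by semicontinuity'' route is not viable as stated: enlarging the arrangement changes $|J|$ and hence the very factors appearing in Maisonobe's formula, the Bernstein--Sato ideal of the enlargement lives in a strictly larger polynomial ring, and there is no semicontinuity statement available that would let you pull specific linear components back to $Z(B_\bff)$. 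You also slightly misstate the input: the relative characteristic cycle of $\shD_X[\bs]\bff^\bs$ itself is $\CC(j_*\sO_U)\times\bbC^r$ (Theorem~\ref{thm:maisonmain}(2)), so $T^*_0X$ appears there with fiber all of $\bbC^r$, not a union of hyperplanes; the hyperplanes only enter for the quotient $\cM_\bff^{-\mathbf1}$, and one must first \emph{prove} that slope-$\mathbf1$ components occur at all.

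The paper's argument is quite different and more elementary. First, Lemma~\ref{lm:mchyp} (from \cite{Budur}) guarantees that \emph{some} hyperplane $\sum_j s_j+k=0$ is a component of $Z(B_\bff)$, i.e.\ that the slope $\mathbf1$ actually occurs. Second, one introduces the auxiliary quotient $\cN_{\mathbf0}=\shD_X[\bs]\bff^\bs/\sum_i\shD_X[\bs]\bff^{\bs+\bee_i}$ and uses the Euler identity (Lemma~\ref{lm:sumb}) to show $\sum_j s_j+n$ annihilates $\cN_{\mathbf0}$. Third, and this is the key step (Theorem~\ref{thm:main1}), one localizes at the prime $q=(\sum_j s_j+\kappa(\mathbf1))$ corresponding to the \emph{first} slope-$\mathbf1$ hyperplane; Proposition~\ref{prop:nonjump} then forces $\shD_X[\bs]_q\bff^{\bs+\bee_i}=\shD_X[\bs]_q\bff^{\bs+\mathbf1}$ for all $i$, so $\cN_{\mathbf0,q}=\cM^{-\mathbf1}_{\bff,q}\neq0$, which is compatible with $\sum_j s_j+n\in B_{\cN_{\mathbf0}}$ only if $\kappa(\mathbf1)=n$. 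Finally, having pinned down the first hyperplane, one obtains the remaining $r-1$ by the substitution $\bs\mapsto\bs-\mathbf1_l$ applied to $Z(B_\bff^{-\bee_i})$ and the decomposition $Z_{r-1}(B_\bff)=\bigcup_l Z_{r-1}(B_\bff^{-\mathbf1_{l-1},-\mathbf1_l})$ from Proposition~\ref{prop:CCdec}. None of the essential ingredients here --- the existence lemma, the $\cN$-module, the ``non-jump'' localization, or the translation-and-decomposition trick --- appear in your proposal.
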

The above theorem particularly proves the multivariable $n/d$-conjecture (a natural generalization of the $n/d$-Conjecture of Budur, Musta\c t\u a and Teitler \cite{BMT})
for complete factorizations of central essential irreducible hyperplane arrangements (see \cite[Conjecture 1.13]{Budur}).  

\begin{example}
Take $\bff=(x,y,z,x+y+z)$
and $f=xyz(x+y+z)$ in $\bbC[x,y,z]$ (notice that $f$ is not free). One can compute with dmodideal.lib \cite{DGPS} and get that
$B_\bff$ is principal and generated by \[\prod_{i=1}^4(s_i+1)\prod_{j=0}^3(s_1+s_2+s_3+s_4+3+j).\]
Then Theorem \ref{thm:maingndcf} is optimally verified. 
\end{example}

For an $r$-tuple $\bff$, the modules  $\shD_X[\bs]\bff^\bs$ and $\shD_X[\bs]\bff^\bs/\shD_X[\bs]\bff^{\bs+\mathbf1_r}$ are both relative $\shD$-modules over $\bbC[\bs]$. Maisonobe \cite{Mai} proved that they are indeed relative holonomic (see \S\ref{subsec:relhol} for definition). For relative holonomic $\shD$-modules, their zero loci of Bernstein-Sato ideals are related to their relative characteristic varieties (cf. Lemma \ref{lm:suppzbf}). When $\bff$ is a complete factorization of a central hyperplane arrangement, we obtain the following:
\begin{theorem}\label{thm:mainmchr}
Suppose that $\bff=(f_1,f_2,\dots,f_r)$ is a complete factorization of a central hyperplane arrangement $f$ (possibly nonreduced) and that $W$ is a dense edge in its intersection lattice. Then for every $l\in\Z$, the subvariety of $T^*X\times \bbC^r$, 
\[T^*_W X\times (\sum_{j\in J(W,\bff)}s_j+l=0)\]
is a component of $\Ch^\rel(\shD_X[\bs]\bff^{\bs-\bk_r}/\shD_X[\bs]\bff^{\bs+\bk_r})$ for every $k\gg l$, where $\bk_r=(\underbrace{k,k,\dots,k}_r)$ and $T^*_WX$ is the conormal bundle of $W$ in the cotangent bundle $T^*X$. Moreover, the multiplicity of $T^*_W X\times (\sum_{j\in J(W,\bff)}s_j+l=0)$ is 
\[(-1)^{\textup{rank}(W)-1}\cdot\chi(\P(X/W)\setminus \bigcup_{H\in D^W}\P(H)),\]
where $\chi(\bullet)$ denotes the topological Euler characteristic. In particular, 
\[(-1)^{\textup{rank}(W)-1}\cdot\chi(\P(X/W)\setminus \bigcup_{H\in D^W}\P(H))>0.\]
\end{theorem}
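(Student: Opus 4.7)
The plan has three stages: exhibit the component in $\Ch^\rel(M_k)$ where $M_k := \shD_X[\bs]\bff^{\bs-\bk_r}/\shD_X[\bs]\bff^{\bs+\bk_r}$; reduce the multiplicity computation to the essential case at the origin via a transversal slice; and compute that multiplicity via specialization to a perverse sheaf. By Maisonobe's theorem $M_k$ is relative holonomic, so every component of $\Ch^\rel(M_k)$ has the form $\overline{T^*_Z X}\times V_Z$ with $Z$ a stratum of the arrangement stratification and $V_Z\subset\bbC^r$ an affine subspace. For $Z = W^\circ$ with $W$ dense, a local calculation near a generic point of $W^\circ$ (using that $s_j$ for $j\notin J(W,\bff)$ acts invertibly there) forces $V_Z$ to depend only on the coordinates $s_j$ with $j\in J(W,\bff)$. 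Existence of the component $T^*_W X\times(\sum_{j\in J(W,\bff)} s_j+l=0)$ for $l\in\Z$ and $k\gg l$ then follows by combining Theorem \ref{thm:maingndcf} — applied to suitably shifted exponents — with Lemma \ref{lm:suppzbf} relating $Z(B_\bff)$ to $\Ch^\rel$.

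To reduce to $W=\{0\}$, pick a smooth point $x_0\in W^\circ$ and a transversal slice $X'\subset X$ of codimension $\dim W$ at $x_0$. Density of $W$ means $D_W|_{X'}$ is an essential central arrangement on $X'$ canonically isomorphic to $D^W\subset X/W$; hyperplanes in $D\setminus D_W$ meet $X'$ transversally away from $0$. Non-characteristic restriction of relative characteristic cycles identifies the multiplicity of $T^*_W X\times H_l$ on $X$ with that of $T^*_0 X'\times H_l$ on $X'$, where $H_l=(\sum_j s_j+l=0)$. Setting $n':=\textup{rank}(W)=\dim X'$, we are reduced to showing that for an essential central arrangement, the multiplicity of $T^*_0 X'\times H_l$ equals $(-1)^{n'-1}\chi(\P(X')\setminus\bigcup_H\P(H))$.

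For the final step, specialize $\bs$ to a generic point $\bs_0$ of $H_l$. For $k\gg l$, relative holonomicity guarantees that the specialized $\shD_{X'}$-module $M_k\otimes_{\bbC[\bs]}\bbC[\bs]/\mathfrak m_{\bs_0}$ has characteristic cycle whose multiplicity at $T^*_0 X'$ equals the desired multiplicity in $\Ch^\rel(M_k)$. Via Riemann–Hilbert the specialized module corresponds to a perverse sheaf on $X'$ built from a rank-one local system on the arrangement complement; the conic structure of the essential arrangement together with a Milnor-fiber computation at $0$ identifies its characteristic-cycle multiplicity at the origin with $(-1)^{n'-1}\chi(\P(X')\setminus\bigcup_H\P(H))$. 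Positivity of this Euler characteristic for essential central arrangements is classical, for example via the sign alternation in the Orlik–Solomon algebra, yielding the last assertion of the theorem.

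The main obstacle I anticipate is controlling $\Ch^\rel(M_k)$ at a generic point of $H_l$ in the third paragraph: the hypothesis $k\gg l$ must be used precisely to ensure that no other components of $\Ch^\rel(M_k)$ contribute at $T^*_0 X'\times\{\bs_0\}$ and that the specialization of $M_k$ at $\bs_0$ is well-behaved. A careful analysis of the $V$-filtration along $H_l$, together with the flatness properties of $M_k$ over $\bbC[\bs]$ at generic points, should form the technical heart of the argument.
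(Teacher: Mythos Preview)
Your three-stage plan is reasonable and the reduction to the essential irreducible case parallels the paper's. However, the approaches diverge substantially at the multiplicity computation, and your third stage contains a genuine gap that you flag but do not close.

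The paper does \emph{not} specialize $M_k$ directly on $X$. Instead it passes to the canonical log resolution $\mu\colon Y\to X$ (successive blowups along dense edges), where $\mu^*D$ is normal crossing and hence $\shD_Y[\bs]\tilde\bff^\bs$ is automatically Cohen--Macaulay. Upstairs one can compute $\CC^\rel(\cM_{\tilde\bff,q})$ explicitly and show, via Theorems~\ref{thm:j_*loc} and~\ref{thm:j_!loc}, that after localizing at the prime $q=(\sum s_j+1)$ one has $\cM_{\tilde\bff,q}=\widetilde j_*/\widetilde j_!$. Pushing forward by $\mu_+$ and using commutation with duality (Theorem~\ref{thm:pshfdualcmm}) identifies $\mu_+(\cM_{\tilde\bff,q})$ with $j_*/j_!$ on $X$, which equals $\cM_{\bff,q}^{\bk_r,-\bk_r}$ for $k\gg 0$. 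Because $j_*/j_!$ is Cohen--Macaulay (Lemma~\ref{lm:sescm}), the specialization $\iota^*$ along a generic point of $H_1$ is well-behaved; Kashiwara's equivalence then reduces the multiplicity to a computation on the exceptional divisor $E\cong\P^{n-1}$, where the Dubson--Kashiwara index theorem gives $(-1)^{n-1}\chi(E^\circ)$ with $E^\circ=\P(X)\setminus\bigcup_H\P(H)$.

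Your proposed route---specialize $M_k$ at a generic $\bs_0\in H_l$ directly on $X$, then invoke Riemann--Hilbert and a Milnor-fiber calculation---founders precisely where you anticipate. Relative holonomicity does \emph{not} by itself guarantee that the fiber $M_k\otimes_{\bbC[\bs]}\kappa(\bs_0)$ has characteristic-cycle multiplicity at $T^*_0X'$ equal to the relative multiplicity: one needs Cohen--Macaulayness (or an equivalent flatness statement for some $\gr^\rel M_k$), and for a non-free arrangement $\shD_X[\bs]\bff^\bs$ need not be Cohen--Macaulay. Nor is it clear, without the resolution argument, that the specialization is $j_*\cL/j_!\cL$ for the rank-one local system $\cL$ determined by $\bs_0$; this identification is exactly what Lemmas~\ref{lm:upstairmf} and~\ref{lm:mupfj*} accomplish. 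Your suggested ``$V$-filtration along $H_l$ plus generic flatness'' might in principle substitute for the log resolution, but it would amount to reproving those lemmas by other means, and the proposal gives no indication of how.
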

It is well known that $W$ is a dense edge if and only if 
$$\chi(\P(X/W)\setminus \bigcup_{H\in D^W}\P(H))\not=0,$$
see \cite[Proposition 2.6]{STV}.
The above theorem gives a geometric interpretation of this nonzero number. But,
the positivity result in the above Theorem can also be deduced directly from \cite[Corollary 1.4]{FK} (see also \cite[Theorem 1.7]{WZ}), as $D^W$ is a central essential hyperplane arrangement in $X/W$.

\subsection{Multivariable Monodromy Conjecture}
We recall the construction of the topological zeta function. See \cite{Budur} for the discussion of related topics.

Let $\bff=(f_1,f_2,\dots,f_r)$ be an $r$-tuple of polynomials in $\bbC[x_1,x_2,\dots,x_n]$. Denote $X = \C^n$,  $D_i=(f_i=0)$ and $D=\sum_{i=1}^rD_i$. Take a log resolution 
$$\mu\colon (Y, F)\to (X, D),$$ 
i.e. $F$ has support of normal crossing divisors, with irreducible components $E_i$ for $i \in S$. Define integers $a_{ij}, k_i$ by
\[ \mu^* D_j = \sum_i a_{ij} E_i, \quad K_{Y/X} = \sum_i k_i E_i, \]
where $K_{Y/X}$ is the canonical divisor of $\mu$.
The \emph{topological zeta function} of $\bff$ is 
\[Z_\bff(\bs)\coloneqq \sum_{I\subseteq S} \chi(E^o_I)\prod_{i\in I}\dfrac{1}{\sum_{j=1}^ra_{ij}s_j+k_i+1}\]
where $E^o_I=\cap_{i\in I}E_i\setminus \cup_{i\in S \RM I}E_i$.
The rational function $Z_\bff(\bs)$ is independent of the choice of log resolutions. We denote by $\textup{PL}(Z_\bff(\bs))$ the pole locus of $Z_\bff(\bs)$.
\begin{conj}\cite[Conjecture 1.17]{Budur}\label{conj:tmc}
\[\textup{PL}(Z_\bff(\bs))\subseteq Z(B_\bff).\]
\end{conj}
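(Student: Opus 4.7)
The plan is to prove Conjecture \ref{conj:tmc} in the setting that is the focus of this paper, namely when $\bff=(f_1,\dots,f_r)$ is a complete factorization of a central hyperplane arrangement. The strategy is to exhibit an explicit log resolution of $\bff$ whose numerical data makes the pole candidates of $Z_\bff(\bs)$ coincide with the hyperplanes that Theorem \ref{thm:maingndcf} has already placed in $Z(B_\bff)$. Since $Z_\bff(\bs)$ is a finite sum of rational functions whose denominators are the linear forms $\sum_j a_{ij}s_j+k_i+1$ attached to the irreducible components $E_i$ of $\mu^{-1}(D)$, one has the a priori containment
\[
\textup{PL}(Z_\bff(\bs))\;\subseteq\; \bigcup_{i\in S}\bigl(\sum_j a_{ij}s_j+k_i+1=0\bigr),
\]
and it then suffices to compute these linear forms for a convenient choice of $\mu$.

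First I would take $\mu\colon Y\to X$ to be the De Concini--Procesi wonderful model of $D$, obtained by iteratively blowing up the dense edges of $D$ in order of increasing dimension. The combinatorics of this construction guarantee that every step is locally a smooth blowup along a smooth center transverse to the accumulated boundary; consequently $\mu^{-1}(D)$ has simple normal crossings and its irreducible components are indexed by dense edges $W\in L(D)$ (rank-one dense edges correspond to strict transforms of the hyperplanes $D_j$, while dense edges of rank $\geq 2$ give exceptional divisors $E_W$). Transversality together with the fact that each $f_j$ is linear (completeness of the factorization) yields the clean values
\[
a_{W,j}=\begin{cases}1 & \text{if } j\in J(W,\bff),\\ 0 & \text{otherwise,}\end{cases}\qquad k_W=\textup{rank}(W)-1.
\]

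Substituting these values into the a priori containment yields
\[
\textup{PL}(Z_\bff(\bs))\;\subseteq\;\bigcup_{W \text{ dense}}\bigl(\sum_{j\in J(W,\bff)}s_j+\textup{rank}(W)=0\bigr),
\]
and each hyperplane on the right lies in $Z(B_\bff)$ by Theorem \ref{thm:maingndcf} applied with $k=0$. The main obstacle I anticipate is bookkeeping rather than conceptual: one must verify that the clean multiplicities $a_{W,j}$ and $k_W$ really persist through the iterated blowups, and not merely for a single smooth blowup. This reduces to checking that, near any point of $Y$, the wonderful model factors locally as a product of smaller wonderful models of subarrangements, which in turn reduces the multiplicity computation to the one-step case where both formulas are immediate.
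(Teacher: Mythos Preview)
Your proposal is correct and follows essentially the same route as the paper: the paper does not prove the conjecture in general but only in the special case of a complete factorization of a hyperplane arrangement (Theorem \ref{thm:tmccfhyp}), and its one-line proof invokes exactly the log resolution by successive blowups along dense edges \cite[Theorem 3.1]{STV} together with Theorem \ref{thm:maingndcf}. Your computation of the numerical data $a_{W,j}$ and $k_W$ simply makes explicit what the paper leaves to the reference, so the two arguments coincide.
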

When $r=1$, the above conjecture is the Strong Monodromy Conjecture of Igusa-Denef-Loeser. It (even in the case $r=1$) is widely open. 

As a main application, we obtain:
\begin{theorem}\label{thm:tmccfhyp}
Suppose that $\bff=(f_1,f_2,\dots,f_r)$ is a complete factorization of a hyperplane arrangement $f$ (possibly nonreduced). Then Conjecture \ref{conj:tmc} holds for $\bff$.
\end{theorem}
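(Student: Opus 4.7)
The plan is to combine an explicit log resolution calculation for $Z_\bff(\bs)$ with Theorem \ref{thm:maingndcf}. First, I would take the wonderful log resolution $\mu\colon Y \to X$ of $(X, D_{\textup{red}})$ obtained by iteratively blowing up strict transforms of the dense edges of the intersection lattice $L(D)$, in order of increasing dimension. Standard computations (De Concini--Procesi, valid in both the central and the affine case) identify the irreducible components of $(\mu^{-1}D)_{\textup{red}}$ with the dense edges $W\in L(D)$: rank-one dense edges give strict transforms of the hyperplanes, while higher-rank dense edges give exceptional divisors $E_W$.

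Next, I would compute the numerical invariants attached to each component $E_W$. An inductive blow-up computation yields $k_{E_W}=\textup{rank}(W)-1$ and, since every $f_j$ is linear,
\[ a_{E_W, j} \;=\; \begin{cases} 1 & \text{if } j\in J(W,\bff), \\ 0 & \text{otherwise.}\end{cases} \]
Plugging into the definition of $Z_\bff(\bs)$, every denominator factor appearing in any term $\chi(E_I^o)\prod_{i\in I}(\sum_j a_{ij}s_j+k_i+1)^{-1}$ has the form $\sum_{j\in J(W,\bff)}s_j+\textup{rank}(W)$ for some dense edge $W$, and hence
\[ \textup{PL}(Z_\bff(\bs)) \;\subseteq\; \bigcup_{W\text{ dense in }L(D)} \Bigl(\sum_{j\in J(W,\bff)}s_j + \textup{rank}(W) = 0\Bigr). \]

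Finally, I would argue that every such hyperplane lies in $Z(B_\bff)$. When $D$ is central, this is immediate from Theorem \ref{thm:maingndcf} with $k=0$. In the general (possibly affine) case, I would pass to the local Bernstein--Sato ideal at any point $p\in W$: there the linear forms not vanishing at $p$ become units, so $B_{\bff,p}$ reduces to the Bernstein--Sato ideal of the central sub-arrangement $\bff^{(p)}$ obtained from the hyperplanes through $p$, extended freely in the remaining $s_j$'s. Theorem \ref{thm:maingndcf} applied to $\bff^{(p)}$, together with $Z(B_{\bff,p})\subseteq Z(B_\bff)$, then places the hyperplane in $Z(B_\bff)$. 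The main technical content lies in the wonderful-resolution bookkeeping and the local-global comparison of Bernstein--Sato ideals; neither is deep, so the theorem is ultimately a direct application of Theorem \ref{thm:maingndcf}.
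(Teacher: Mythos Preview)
Your proposal is correct and follows essentially the same route as the paper: use the wonderful resolution obtained by blowing up dense edges (the paper cites \cite[Theorem 3.1]{STV}), read off that every denominator of $Z_\bff(\bs)$ has the form $\sum_{j\in J(W,\bff)}s_j+\textup{rank}(W)$, and invoke Theorem~\ref{thm:maingndcf} with $k=0$. The paper compresses all of this into a single sentence (also pointing to \cite[Theorem~1.18]{Budur} as an alternative), whereas you spell out the discrepancy and multiplicity bookkeeping and the local reduction from an affine arrangement to its central localization at a point of $W$; these details are exactly the ones the paper leaves implicit.
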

Since successive blowups along dense edges give a log resolution of a hyperplane arrangement \cite[Theorem 3.1]{STV}, the above theorem is a direct consequence of Theorem \ref{thm:maingndcf} (or one can apply \cite[Theorem 1.18]{Budur}). 

When $f$ is a tame hyperplane arrangement, Walther \cite{Wal} proved  Conjecture \ref{conj:tmc} for $f$ by proving the $n/d$-Conjecture of Budur, Musta\c t\u a and Teitler; based on Walther's idea, Bath \cite{Bath} further proved Conjecture \ref{conj:tmc} for certain factorizations of $f$.

At this moment, we cannot conclude Conjecture \ref{conj:tmc} for the hyperplane arrangement $f$ (namely $r=1$) from Theorem \ref{thm:tmccfhyp} because we cannot exclude the possibility
\[\delta^{-1}(Z(B_\bff))\supsetneq Z(B_f)\]
where $\delta$ is the diagonal embedding as in Theorem \ref{thm:maindiasp}. Conversely, it is not known whether Conjecture \ref{conj:tmc} for $f$ implies Theorem \ref{thm:tmccfhyp}.

\subsection{}
In Section \ref{sec:2}, we recall the general theory of relative $\shD$-modules. Section \ref{sec:rr} is about Bernstein-Sato ideals and their relations with relative characteristic varieties. In Section \ref{sec:spe}, we discuss the diagonal specialization and prove Theorem \ref{thm:maindiasp}. Section \ref{sec:apptohyparr} is devoted to applications for hyperplane arrangements.

\begin{ack}
We are grateful to Mircea Musta\c t\u a for pointing out a mistake in an early version of this paper. We specially thank Peng Zhou for many useful discussions. We thank Takuro Abe for answering questions. We thank Daniel Bath, Nero Budur, Mihnea Popa, Robin van der Veer and Uli Walther for comments and suggestions. 
\end{ack}

\section{Relative $\shD$-modules and characteristic cycles}\label{sec:2}
\subsection{}\label{subsec:relch}
Suppose that $X$ is a smooth algebraic variety over $\bbC$ (or a complex manifold) of dimension $n$ and let $\shD_X$ be the sheaf of rings of the algebraic (or analytic) differential operators on $X$. We let $R$ be a commutative noetherian $\bbC$-algebra integral domain such that the localization at every prime ideal is a regular local ring. We always assume that the Krull dimension of $R$ is finite.
We then set 
\[\sA_R=\shD_X\otimes_\bbC R.\]
For instance, if $R=\bbC[\bs]=\bbC[s_1,\dots,s_r]$ for some integer $r\ge0$, then 
\[\sA_{\bbC[\bs]}=\shD_X[\bs]\coloneqq \shD_X\otimes_\bbC \bbC[s_1,\dots,s_r].\]
A (left or right) $\sA_R$-module $\cM$ is called coherent over $\sA_R$ or relative coherent over $R$ (or $\Spec R$) if $\cM$ is locally finite presented over $\sA_R$. The definition of relative characteristic cycles is similar to that of $\shD$-modules, that is, the case when $R=\bbC$; see \cite[\S 2]{HTT} for this case. We recall the construction in general for completeness; see also \cite[\S 3]{BVWZ}.

The order filtration $F_\bullet$ on $\shD_X$ induces a relative filtration on $\sA_R$ by
\[F^\rel_\bullet\sA_R=F_\bullet\shD_X\otimes_\bbC R, \]
in other words, elements in $R$ have degree zero. Now we assume that $\cM$ is a coherent $\sA_R$-module.
Then a relative good filtration of $\cM$ is a filtration $F_\bullet^\rel$ on $\cM$ compatible with $F^\rel_\bullet\sA_R$ so that 
\[\gr^\rel_\bullet \cM \textup{ is coherent over } \gr^\rel_\bullet\sA_R\simeq \gr_\bullet\shD_X\otimes_\bbC R.\]
Similar to the case for coherent $\shD$-modules, good filtrations always exist in the algebraic case and locally in the analytic case for relative coherent $\shD$-modules.

Since $\Spec(\gr^\rel_\bullet\shD_X[\bs])\simeq T^*X\times \Spec R$, the $\sim$-functor gives that $\widetilde\gr^\rel_\bullet \cM$ a coherent $\sO_{T^*X\times \bbC^r}$-module. We then define the relative characteristic cycle of $\cM$ inside $T^*X\times \Spec R$ to be the cycle of the sum of irreducible components of the support of $\widetilde\gr^\rel_\bullet \cM$ with multiplicities, denoted by $\CC^\rel(\cM).$ We also write the relative characteristic variety of $\cM$, that is the support of $\CC^\rel(\cM)$, by $\Chr(\cM)$. Similar to the case for $\shD$-modules (namely $R=\bbC$), one can check that $\Chr(\cM)$ and $\CC^\rel(\cM)$ are independent of the choices of good filtration. In particular, $m_p(\cM)$ is independent of such choices, where $p$ is the generic point of an irreducible component of $\Chr(\cM)$ and $m_p$ is the multiplicity of $\gr^\rel_\bullet\cM$ at $p$.
In the analytic case, $\CC^\rel(\cM)$ is a locally finite sum.

We denote by $\CC^\rel_{k}(\cM)$ the part of $\CC^\rel(\cM)$ with pure dimension $k$ and by $\Ch^\rel_{k}(\cM)$ the part of $\Ch^\rel(\cM)$ with pure dimension $k$. If $R$ is $\bbC$ or a field extension of $\bbC$, then we use $\Ch$ and $\CC$ to denote the characteristic variety and the characteristic cycle respectively for $\shD$-modules on smooth varieties over the base field $R$.

It is worth mentioning that $\supp(\widetilde\gr^\rel_\bullet\cM)$ might has embedded associated primes for some good filtration and the embedded associated primes depend on filtration. However, for pure modules, there at least exists one good filtration so that $\supp(\widetilde\gr^\rel_\bullet\cM)$ is equi-dimensional without embedded associated primes; see \cite[Theorem A:IV 4.11]{Bj}. 

\subsection{Localization of relative $\shD$-modules}\label{subsection:locrdm}
We discuss localization of relative $\shD$-modules as $R$-modules and localization of relative characteristic cycles. 

Suppose that $\cM$ is a coherent $\sA_R$-module. Let $q\subseteq \sA_R$ be a prime ideal. Then the localization of $\cM$ at $q$ is defined as
\[\cM_q=\cM\otimes_{R} R_q\]
where $R_q$ is the localization of $R$ at $q$. Then $\cM_q$ is a coherent $\sA_{R_q}$-module. 

Define 
\[B_{\cM}\coloneqq\Ann_{R}(\cM)\subseteq R\]
the ideal of annihilators of $\cM$ as an $R$-module, called the Bernstein-Sato ideal of $\cM$ over $R$. By definition, $B_\cM$ localizes, that is, if $S\subseteq R$ is a multiplicative set, then 
\[
S^{-1}B_\cM= \Ann_{S^{-1}R}(S^{-1}\cM)\subseteq S^{-1}R.
\]
In particular, if $q\subseteq R$ a prime ideal, then 
\be\label{eq:localbf}
B_{\cM,q}= \Ann_{R_q}(\cM_{q} )\subseteq R_q.
\ee

Recall that the support of $\cM$ as a $R$-module is 
\[\supp_{R}(\cM)=\{\textup{ prime ideals $m \in \Spec R$ }| \cM_m\not=0\}.\]
By abuse of notations, we also use  $\supp_{R}(\cM)$ to denote the set of closed points of maximal ideal in $\supp_{R}(\cM)$ in $\Spec R$. 
Since $B_\cM$ localizes, we immediately have
\[Z(B_\cM)\subseteq\supp_{R}(\cM),\]
where $Z(B_\cM)$ denotes the zero locus of the ideal $B_{\cM}$.
However, since $\cM$ might not be finite generated over $R$, in general we do not have $Z(B_\cM)=\supp_{R}(\cM)$.

We write the relative characteristic cycle of $\cM$ by
\[\CC^\rel(\cM)=\sum_p m_p\bar p,\]
where $p$ goes over the generic points of the irreducible components of the support of $\gr^\rel_\bullet\cM$ and $\bar p$ is its closure in $T^*X\times \Spec R$. We also define the localization of the characteristic cycle by 
\[\CC^\rel(\cM)_q=\sum_{p\subseteq p_2^*q} m_p\bar p\]
where $p_2\colon T^*X\times \Spec R\to \Spec R$ and $\bar p$ is the closure of $p$ inside $T^*X\times \Spec R_q$.

We take a good filtration $F_\bullet \cM$ and then define
$$F_\bullet \cM_q\coloneqq (F_\bullet\cM)_q\coloneqq (F_\bullet \cM)\otimes_{R}R_q,$$  
which is a good filtration on $\cM_p$ as the localization functor is exact.
We then immediately have 
\[\gr^\rel_\bullet (\cM_q)=\gr^\rel_\bullet(\cM)_q\coloneqq \gr^\rel_\bullet(\cM)\otimes_{R}R_q.\]
As a consequence, we have 
\be\label{eq:ccloc}
\CC^\rel(\cM_q)=\CC^\rel(\cM)_q.
\ee

\subsection{Relative holonomicity}\label{subsec:relhol}
We recall the following definition from \cite{BVWZ}, which is  motivated by \cite[D\'efinition 1 and Proposition 8]{Mai} and will play a key role in understanding Bernstein-Sato ideals.
\begin{definition}\cite[Definition 3.2.3]{BVWZ}
Suppose that $\cM$ is a coherent $\sA_R$-module. We say that $\cM$ is relative holonomic over $R$ (or $\Spec R$) if each irreducible component of $\Chr(\cM)$ has a decomposition as 
$\Lambda\times S,$
where $\Lambda$ is an irreducible conic Lagrangian in $T^*X$ and $S$ is an algebraic irreducible subvariety of $\Spec R$.
\end{definition}
It is worth mentioning that if $\cM$ is relative holonomic, then its graded number $j(\cM)\ge n$, by Theorem \ref{thm:relCh}(1). The category of all relative holonomic modules over $R$ is an abelian category (see \cite[3.2.4(1)]{BVWZ}).
\begin{lemma}\cite[Lemma 3.4.1]{BVWZ}
\label{lm:suppzbf}
If $\cM$ is a relative holonomic $\sA_R$-module, then 
\[Z(B_\cM)=p_2(\Chr(\cM))\textup{ and }Z(B_\cM)=\supp_{R}(\cM)\]
where $p_2\colon T^*X\times \Spec R\to \Spec R$ is the natural projection.
\end{lemma}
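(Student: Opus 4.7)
The plan is to split the lemma into a purely commutative-algebra statement that $\overline{\supp_R(\cM)}=Z(B_\cM)$ for any $\sA_R$-module $\cM$, and a geometric statement that $\supp_R(\cM)=p_2(\Chr(\cM))$ (and is in particular closed) whenever $\cM$ is relative holonomic. Given those two ingredients, the two equalities in the lemma drop out in one line.

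For the commutative-algebra part, I would write $B_\cM=\Ann_R\cM=\bigcap_{m\in\cM}\Ann_R(m)$. For each $m$, every prime $q\supseteq\Ann_R(m)$ lies in $\supp_R(\cM)$ since $m/1\neq 0$ in $\cM_q$, so $V(\Ann_R m)\subseteq\overline{\supp_R(\cM)}$ and hence $\Ann_R(m)\supseteq I(\overline{\supp_R(\cM)})$. Intersecting over $m$ gives $B_\cM\supseteq I(\overline{\supp_R(\cM)})$, i.e.\ $Z(B_\cM)\subseteq\overline{\supp_R(\cM)}$. The reverse inclusion is the observation recorded just before the lemma: if $b\in B_\cM\setminus q$, then $b$ is a unit in $R_q$ that kills $\cM_q$, so $\cM_q=0$ and $q\notin\supp_R(\cM)$. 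Thus $Z(B_\cM)=\overline{\supp_R(\cM)}$, and it remains to show $\supp_R(\cM)=p_2(\Chr(\cM))$ and that this set is closed.

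For the geometric part, pick locally on $X$ a good relative filtration $F^\rel_\bullet\cM$. Localization at a prime $q\subseteq R$ is exact and commutes with the direct limit $\cM=\bigcup_k F^\rel_k\cM$, so it commutes with forming the associated graded, giving $(\gr^\rel\cM)_q=\gr^\rel(\cM_q)$; combining with equation \eqref{eq:ccloc} and the identification $\Chr(\cM)=\supp\widetilde{\gr^\rel\cM}$ yields
\[\cM_q=0\iff \gr^\rel(\cM)_q=0\iff \Chr(\cM)_q=\emptyset.\]
Relative holonomicity now enters: the finitely many irreducible components of $\Chr(\cM)$ decompose as $\Lambda_i\times S_i$, with $\Lambda_i$ conic Lagrangian in $T^*X$ and $S_i=V(\wp_i)$ irreducible closed in $\Spec R$. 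Since localization at $q$ touches only the $R$-factor, $(\Lambda_i\times S_i)_q\neq\emptyset$ iff $\wp_i\subseteq q$ iff $q\in S_i$; hence $\cM_q\neq 0$ iff $q\in\bigcup_i S_i=p_2(\Chr(\cM))$. Therefore $\supp_R(\cM)=p_2(\Chr(\cM))$, a finite union of closed subvarieties and thus closed, and by the first paragraph it equals $Z(B_\cM)$.

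The main obstacle is precisely the reduction $\cM_q=0\iff(\gr^\rel\cM)_q=0$ that transports the support question to one about $\Chr(\cM)$, given that $\cM$ is typically far from $R$-finitely-generated; the decisive point is that $\widetilde{\gr^\rel\cM}$ is coherent and so has only finitely many irreducible support components, and relative holonomicity further constrains each component to a product form, so that the projection $p_2(\Chr(\cM))$ is automatically a finite union of closed sets. Without this product decomposition there is no reason $\supp_R(\cM)$ should be closed, which is exactly the failure mode of the identity $Z(B_\cM)=\supp_R(\cM)$ for general coherent $\sA_R$-modules.
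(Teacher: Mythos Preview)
The paper does not supply its own proof of this lemma; it is quoted from \cite[Lemma~3.4.1]{BVWZ}. So there is nothing in the paper to compare against, and I will just assess your argument on its own terms. Your Part~2, identifying $\supp_R(\cM)$ with $p_2(\Chr(\cM))$ via the compatibility of localization with $\gr^\rel$ and the product decomposition of components, is correct and is the expected argument.

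There is, however, a genuine gap in Part~1. The implication ``$V(\Ann_R m)\subseteq\overline{\supp_R(\cM)}$, hence $\Ann_R(m)\supseteq I(\overline{\supp_R(\cM)})$'' is false: from the inclusion of closed sets you only get $\sqrt{\Ann_R(m)}\supseteq I(\overline{\supp_R(\cM)})$. Intersecting over all $m$ then yields $\bigcap_m\sqrt{\Ann_R(m)}\supseteq I(\overline{\supp_R(\cM)})$, which says nothing about $\sqrt{B_\cM}=\sqrt{\bigcap_m\Ann_R(m)}$, because for an infinite family the radical of the intersection can be strictly smaller than the intersection of the radicals. The purported identity $Z(B_\cM)=\overline{\supp_R(\cM)}$ in fact fails for arbitrary $R$-modules: over a DVR $R$ with fraction field $K$, the module $\cM=K/R$ has $B_\cM=0$ (no nonzero element kills every $x^{-n}$) while $\supp_R(\cM)$ is the single closed point. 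This is precisely the phenomenon the paper warns about just before the lemma.

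The repair is to use the one hypothesis you deliberately set aside in Part~1: $\cM$ is \emph{coherent over $\sA_R$}, and $R$ lies in the center of $\sA_R$. Working locally on $X$, write $\cM=\sA_R m_1+\cdots+\sA_R m_k$. Centrality of $R$ gives $r\cdot(Pm_j)=P\cdot(rm_j)$, so $B_\cM=\bigcap_{j=1}^k\Ann_R(m_j)$ is a \emph{finite} intersection. Now $Z(B_\cM)=\bigcup_{j=1}^k V(\Ann_R(m_j))\subseteq\supp_R(\cM)$, and together with the easy inclusion this already gives $Z(B_\cM)=\supp_R(\cM)$ (in particular closed). Your Part~2 then identifies this with $p_2(\Chr(\cM))$ under relative holonomicity. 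So the missing idea is not the product decomposition of $\Chr(\cM)$ but rather the passage from ``coherent over $\sA_R$'' to ``$B_\cM$ is a finite intersection of cyclic annihilators''.
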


\subsection{Duality}\label{subsec:dualreald}
We now discuss duality of $\sA_R$-modules in a way compatible with the duality for usual $\shD$-modules. 
\begin{definition}\label{def:purecm}
Assume that $\cM$ is a non-zero coherent $($left$)$ $\sA_R$-module.

(1) The duality functor is \[\D(\cM)\coloneqq \mathscr{R}hom_{\sA_R}(\cM,\sA_R)\otimes_{\sO}\omega^{-1}_X[n],\]
where $\omega_X$ is the dualizing sheaf of $X$. 

(2) The graded number of $\cM$, denoted by $j(\cM)$, is defined by 
\[j(\cM)=j_{\sA_R}(\cM)\coloneqq \min\{k|\Ext^k_{\sA_R}(\cM, \sA_R)\neq 0\}.\]

(3) For some $j\in \Z_{\ge 0}$, $\cM$ is called $j$-pure if
\[j(\cN)=j(\cM)=j\]
for every non-zero submodule $\cN\subseteq \cM$.

(4) For some $j\in \Z_{\ge0}$, $\cM$ is called $j$-Cohen-Macaulay over $\sA_R$ if 
\[\Ext^k_{\sA_R}(\cM, \sA_R)=0,\textup{ for } k\not=j.\]
\end{definition}
One can define purity and Cohen-Macaulayness for $\gr^\rel_\bullet\sA_R$-modules in a similar way. See \cite[Appendix IV]{Bj} in general. 

Similar to the case for coherent $\shD$-modules, we have:
\begin{theorem}\cite[Theorem 3.2.2]{BVWZ}\label{thm:relCh}
Suppose that $\cM$ is a coherent $\sA_R$-module. Then \\
(1) $j(\cM)+\dim(\Chr(\cM))=2n+\dim(\Spec R)$;\\
(2) if $0\to \cM'\to \cM\to \cM''\to 0$ is a short exact sequence of coherent $\shD_X[\bs]$-modules, then 
\[\Chr(\cM)=\Chr(\cM')\cup\Chr(\cM'')\]
and if $p$ is the generic point of an irreducible component of $\Chr(\cM)$ then 
\[m_p(\cM)=m_p(\cM')+m_p(\cM'').\]
\end{theorem}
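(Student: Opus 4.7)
The plan is to reduce both statements to questions about finitely generated graded modules over the commutative ring $\gr^\rel_\bu\sA_R\simeq \gr_\bu\shD_X\otimes_\bbC R$, which locally on $X$ is a polynomial algebra over the regular noetherian base $R$.

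For part (1), I would first verify that $\sA_R$ equipped with its relative order filtration is Auslander regular, Cohen--Macaulay, and of pure dimension $2n+\dim\Spec R$. Indeed, $\gr^\rel_\bu\sA_R$ is locally a polynomial ring $R[\xi_1,\dots,\xi_{2n}]$ over a regular noetherian ring, hence is itself regular and Cohen--Macaulay; the general machinery of \cite[Appendix IV]{Bj} then upgrades this into Auslander regularity of the filtered ring $\sA_R$ and yields the dimension formula $j(N)+\dim\supp N = 2n+\dim R$ for every nonzero finitely generated graded $\gr^\rel_\bu\sA_R$-module $N$. Given a coherent $\sA_R$-module $\cM$, I would pick a relative good filtration $F^\rel_\bu\cM$, lift a presentation to a filtered free resolution, and use the Rees-algebra/spectral-sequence comparison argument to prove
\[ j_{\sA_R}(\cM) = j_{\gr^\rel_\bu\sA_R}\bigl(\gr^\rel_\bu\cM\bigr). \]
Combined with $\dim\supp\widetilde{\gr^\rel_\bu\cM}=\dim\Chr(\cM)$, this gives the formula in (1).

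For part (2), starting from any relative good filtration $F^\rel_\bu\cM$ on the middle term I would equip $\cM'$ with $F^\rel_\bu\cM'\coloneqq F^\rel_\bu\cM\cap\cM'$ and $\cM''$ with the image filtration. A standard Artin--Rees argument, valid over the noetherian base $R$, shows both induced filtrations are again relative good, and taking associated graded yields a short exact sequence
\[ 0\to \gr^\rel_\bu\cM'\to \gr^\rel_\bu\cM\to \gr^\rel_\bu\cM''\to 0 \]
of coherent $\gr^\rel_\bu\sA_R$-modules. Additivity of support as a set, together with additivity of length at the generic point of any irreducible component of $\supp\gr^\rel_\bu\cM$, then gives the two assertions about $\Chr$ and multiplicities.

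The main obstacle is the filtered-to-graded comparison of Ext in part (1) when the base is a general regular noetherian ring rather than $\bbC$. One needs to check that the Rees algebra $\mathrm{Rees}(F^\rel_\bu\sA_R)$ is noetherian and flat over $\bbC[t]$ and that a filtered free resolution of $\cM$ can be chosen whose associated graded resolves $\gr^\rel_\bu\cM$. Since $R$ is flat over $\bbC$ and the order filtration on $\shD_X$ is well understood, these facts should extend from the classical setting $R=\bbC$ essentially verbatim, but the bookkeeping across the base is the technically most delicate piece of the argument. Once the Auslander regularity of $\sA_R$ is established, the dimension formula and the additivity statements are essentially formal.
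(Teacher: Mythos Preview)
The paper does not give its own proof of this theorem: the statement is quoted verbatim from \cite[Theorem 3.2.2]{BVWZ} and is used as a black box, so there is no argument in the present paper to compare your proposal against.

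That said, your sketch is the standard route and is essentially what underlies the cited result. Reducing to the Auslander--regular, Cohen--Macaulay graded ring $\gr^\rel_\bullet\sA_R$ (a polynomial ring over the regular base $R$, locally on $X$), invoking the dimension formula from \cite[Appendix IV]{Bj}, and transferring $j(\cM)$ between $\sA_R$ and $\gr^\rel_\bullet\sA_R$ via a filtered free resolution/Rees algebra argument is exactly how part (1) is proved in the literature; part (2) is the usual induced--filtration plus Artin--Rees argument giving a short exact sequence on associated gradeds, from which the support and length statements are immediate. Your caveat about checking that the Björk machinery goes through over a general regular noetherian $R$ rather than just $\bbC$ is the right place to be careful, but this is precisely what \cite[\S3]{BVWZ} verifies, so there is no genuine gap.
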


Using Part(1) of the above theorem, one can easily see that $j$-Cohen-Macaulayness implies $j$-purity. 

Since $R$ is in the center of $\sA_R$, by definition, it is obvious that localization and duality commute, that is,
\[\D(\cM)_q\simeq \D(\cM_q)\]
where $q$ is a prime ideal in $R$.

We collect the following lemma for later usage.
\begin{lemma}\label{lm:sescm}
Suppose that we have a short exact sequence of relative holonomic $\sA_R$-modules:
\[0\to \cM_1\rightarrow \cM\rightarrow\cM_2\to 0.\]
 If $\cM_1$ and $\cM$ are Cohen-Macaulay, $j(\cM_1)=j(\cM)$ and $j(\cM_2)=j(\cM)+1$, then  $\cM_2$ is Cohen-Macaulay.
\end{lemma}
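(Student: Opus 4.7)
The plan is straightforward homological algebra: apply $\mathscr{R}hom_{\sA_R}(-,\sA_R)$ to the short exact sequence and chase vanishing in the resulting long exact sequence
\[\cdots\to\Ext^{k-1}_{\sA_R}(\cM_1,\sA_R)\to\Ext^k_{\sA_R}(\cM_2,\sA_R)\to\Ext^k_{\sA_R}(\cM,\sA_R)\to\Ext^k_{\sA_R}(\cM_1,\sA_R)\to\cdots.\]
Setting $j\coloneqq j(\cM)=j(\cM_1)$, the Cohen--Macaulay hypotheses kill $\Ext^k_{\sA_R}(\cM,\sA_R)$ and $\Ext^k_{\sA_R}(\cM_1,\sA_R)$ for every $k\ne j$, so the task reduces to showing that $\Ext^k_{\sA_R}(\cM_2,\sA_R)=0$ for every $k\ne j+1$.

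I would split this into two ranges. For $k\le j$ nothing needs to be done, because the hypothesis $j(\cM_2)=j+1$ is literally the assertion that $\Ext^k_{\sA_R}(\cM_2,\sA_R)=0$ for all $k<j+1$. For $k>j+1$, the two neighboring Ext sheaves $\Ext^{k-1}_{\sA_R}(\cM_1,\sA_R)$ and $\Ext^k_{\sA_R}(\cM,\sA_R)$ both vanish (since $k-1>j$ and $k>j$), so exactness of the long sequence sandwiches $\Ext^k_{\sA_R}(\cM_2,\sA_R)$ between two zeros and forces it to vanish. Combining the two ranges yields exactly that $\cM_2$ is $(j+1)$-Cohen-Macaulay.

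I expect no genuine obstacle here---the argument is purely formal. The only point worth checking is that the shift by $[n]$ and the twist by $\omega_X^{-1}$ built into the duality functor $\D$ do not disturb the vanishing pattern used above, which they do not, as they are exact operations that merely reindex or tensor with an invertible sheaf. Notably, the relative holonomicity hypothesis plays no role in the proof itself; it is presumably included so that the conclusion can be fed directly into the dimension-theoretic and support-theoretic machinery developed in Sections \ref{subsec:relhol} and \ref{subsec:dualreald}.
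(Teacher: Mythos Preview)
Your proposal is correct and follows exactly the approach of the paper: apply $\mathscr{R}hom_{\sA_R}(\bullet,\sA_R)$ to the short exact sequence and read off the vanishing from the associated long exact sequence. In fact you give more detail than the paper, which simply states that the result follows immediately from the long exact sequence.
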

\begin{proof}
We apply $\mathscr{R}hom_{\sA_R}(\bullet,\sA_R)$ to the short exact sequence. By consider the associated long exact sequence, the required statement follows immediately. 
\end{proof}

\begin{prop}\label{prop:purezerob}
Assume that $\cM$ is a relative holonomic $\sA_R$-module. If $\cM$ is $(n+k)$-pure for some integer $k\ge 0$, then $Z(B_\cM)$ is equi-dimensional of codimension $k$.
\end{prop}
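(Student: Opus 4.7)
The plan is to extract dimension information about $\Chr(\cM)$ from the purity hypothesis, transfer it to $\Spec R$ using the product structure that relative holonomicity gives each component, and then conclude by Lemma~\ref{lm:suppzbf}. First, the purity hypothesis $j(\cM)=n+k$ combined with Theorem~\ref{thm:relCh}(1) immediately yields
\[\dim\Chr(\cM)\;=\;2n+\dim\Spec R-(n+k)\;=\;n+\dim\Spec R-k.\]
By relative holonomicity, each irreducible component of $\Chr(\cM)$ decomposes as a product $\Lambda_i\times S_i$ with $\Lambda_i$ an irreducible conic Lagrangian in $T^*X$ (so $\dim\Lambda_i=n$) and $S_i\subset\Spec R$ irreducible, hence $\dim S_i\le\dim\Spec R-k$, with equality for at least one index.

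The crux is to show that \emph{every} $S_i$ attains this maximal dimension. Here I would invoke the structural fact recalled at the end of \S\ref{subsec:relch} (from \cite[Theorem A:IV 4.11]{Bj}): because $\cM$ is pure, there exists a good filtration for which $\supp(\widetilde\gr^\rel_\bullet\cM)$ is equi-dimensional with no embedded associated primes. Since $\Chr(\cM)$ is the (filtration-independent) union of the irreducible components of this support, this forces $\Chr(\cM)$ itself to be equi-dimensional of dimension $n+\dim\Spec R-k$; combined with the product decomposition above one obtains $\dim S_i=\dim\Spec R-k$ for every $i$. Lemma~\ref{lm:suppzbf} then identifies $Z(B_\cM)=p_2(\Chr(\cM))=\bigcup_i S_i$ with a finite union of irreducible subvarieties of common dimension $\dim\Spec R-k$, so its irreducible components all have codimension $k$ in $\Spec R$.

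The main obstacle is precisely the appeal to the pure-filtration result, which is where the purity hypothesis does the essential work: without it one could only conclude the upper bound $\dim S_i\le\dim\Spec R-k$, and lower-dimensional components arising from minimal primes of $\gr^\rel_\bullet\cM$ of mixed dimensions would not be ruled out. Everything else is routine dimension counting in $T^*X\times\Spec R$ using machinery already established in the paper.
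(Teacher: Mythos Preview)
Your proposal is correct and follows essentially the same approach as the paper's proof: both invoke the Bj\"ork pure-filtration result (\cite[A:IV Theorem 4.11]{Bj}, together with \cite[A:IV Proposition 3.7]{Bj} in the paper's version) to get equi-dimensionality of $\Chr(\cM)$, then use the product decomposition from relative holonomicity and Lemma~\ref{lm:suppzbf} to project down to $Z(B_\cM)$. Your write-up is in fact slightly more explicit than the paper's, since you spell out the dimension count via Theorem~\ref{thm:relCh}(1) that pins the common dimension of the $S_i$ at $\dim\Spec R-k$; the paper leaves this implicit.
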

\begin{proof}
By purity, we can find a good filtration $F_\bullet\cM$ over $F^\rel_\bullet\sA_R$ so that $\gr^F_\bullet\cM$ is also $(n+r)$-pure over $\gr^\rel_\bullet\sA_R$, thanks to \cite[A:IV. Theorem 4.11]{Bj}. Then 
\[\supp(\gr^F_\bullet\cM)=\Ch^\rel(\cM)\]
is equi-dimensional by \cite[A:IV. Proposition 3.7]{Bj}. Then the required statement follows from the definition of relative holonomicity and Lemma \ref{lm:suppzbf}.
\end{proof}

\subsection{Direct image functor and base change}\label{subsec:pffunctor}
Let $\mu\colon X\to Y$ be a morphism between smooth complex varieties. If $\cM$ be a left relative $\shD$-module on $X$ over $R$ (or more generally a complex of left relative $\shD$-module on $X$ over $R$), then since $\cM$ is particularly a $\shD_X$-module, we define the direct image of $\cM$ under $\mu$ by
\[\mu_+(\cM)=\R\mu_*(\cM\otimes_{\sO}\omega_X\otimes^\L_{\shD_X} \mu^*(
\shD_Y\otimes_\sO \omega_Y^{-1}))\]
where $\omega_X$ (resp. $\omega_Y$) is the dualizing sheaf of $X$ (resp. $Y$). Namely, the direct images of relative $\shD$-modules are just their direct images as absolute $\shD$-modules. It is obvious that $\mu_+(\cM)$ is a complex of left relative $\shD$-module on $Y$ over $R$. Similar to the absolute case (see \cite[Theorem 2.3.15]{Bj}), one can check that $\mu_+$ preserves coherence. 
\begin{theorem}\label{thm:pshfdualcmm}
Suppose that $\cM$ is a coherent relative $\shD$-module on $X$ over $R$, and that $\mu:X\to Y$ is a proper morphism between smooth complex varieties. Then we have a canonical isomorphism
\[\mu_+(\D\cM)\simeq \D(\mu_+\cM).\]
\end{theorem}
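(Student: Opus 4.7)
The plan is to adapt the standard argument for the duality--direct image compatibility of coherent $\shD$-modules (see \cite{HTT}) to the relative setting, exploiting that $R$ lies in the center of $\sA_R=\shD_X\otimes_\bbC R$ and propagates inertly through the constructions of $\mu_+$ and $\D$.

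First I would factor $\mu$ through its graph, $\mu=p\circ i_\mu$, where $i_\mu\colon X\hookrightarrow X\times Y$ is the closed graph embedding (automatically proper) and $p\colon X\times Y\to Y$ is the second projection (smooth, and proper on $i_\mu(X)$ since $\mu$ is). Because both $\mu_+$ and $\D$ are functorial under composition, it suffices to establish the compatibility separately for (i) a closed immersion $i$ between smooth varieties, and (ii) a smooth projection $p\colon Z\times Y\to Y$ applied to coherent relative $\shD$-modules with support proper over $Y$.

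Case (i) is handled by the relative Kashiwara equivalence: using normal coordinates $x_1,\dots,x_c$ cutting out $i(Z)$ and the Koszul resolution of the transfer bimodule $\shD_{Z\to X}\otimes_\bbC R$, one computes $i_+\cM\simeq\cM\otimes_\bbC\bbC[\partial_{x_1},\dots,\partial_{x_c}]$ explicitly and matches $i_+\D\cM$ with $\D i_+\cM$ term by term, with the $R$-factor riding along. Case (ii) follows from relative Serre--Grothendieck duality: using the identification $p^!(\shD_Y\otimes_\bbC R)\simeq (\shD_{Z\times Y\to Y}\otimes_\bbC R)[\dim Z]$ together with the projection formula, the compatibility reduces to ordinary coherent Serre duality along the fibers of $p$. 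The main obstacle is controlling the commutation of $\R p_*$ with $\mathscr{R}hom_{\sA_R}(-,\sA_R)$; this requires that $\sA_R$ have finite homological dimension, which does hold since $R$ is regular of finite Krull dimension and $\shD_X$ has finite global dimension, so the relevant duality complexes are perfect. Once both cases are verified, the functoriality of $\mu_+$ and $\D$ under composition delivers the theorem.
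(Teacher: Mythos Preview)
Your proposal is correct but takes a genuinely different route from the paper. The paper does not redo the graph-factorization argument in the relative setting; instead it bootstraps from the absolute case. First, for modules of the special form $\cM=\cN\otimes_\bbC R$ with $\cN$ a coherent $\shD_X$-module, the paper observes that $\D$, $\mu_+$, and $\otimes_\bbC R$ all commute with one another, so the claim reduces immediately to the known absolute isomorphism $\mu_+(\D\cN)\simeq\D(\mu_+\cN)$. Then, for a general coherent $\sA_R$-module $\cM$, the paper invokes a finite resolution $\cP^\bullet\to\cM$ with each term $\cP^i=\shD_X\otimes_{\sO}\cL^i\otimes_\bbC R$ (so of the special form already handled), and concludes by induction on the length of $\cP^\bullet$ via truncation triangles. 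Your approach, by contrast, reruns the closed-immersion/smooth-projection dichotomy with $R$ carried along inertly; this is more self-contained and makes explicit where the regularity and finite Krull dimension of $R$ enter (through the finite homological dimension of $\sA_R$ needed for perfectness of the duality complexes), but it is longer and duplicates work already encapsulated in the absolute statement. The paper's argument is shorter precisely because it treats the absolute duality theorem as a black box and only has to check that the relative decoration $\otimes_\bbC R$ is compatible with everything in sight.
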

\begin{proof}
We first prove the case $\cM=\cN\otimes_\bbC R$ for $\cN$ a coherent $\shD_X$-module. Then 
\[\D(\cM)\simeq \D(\cN)\otimes_\bbC R\textup{ and }\mu_+\cM=(\mu_+\cN)\otimes_\bbC R.\]
Thus
\[\mu_+(\D\cM)\simeq\mu_+(\D\cN)\otimes_\bbC R.\]
By the commutativity between the direct image and the duality functor for absolute $\shD$-modules (see for instance \cite[Theorem 2.7.2]{HTT}), we have 
\[\mu_+(\D\cN)\simeq\D(\mu_+\cN).\]
Therefore,
\[\mu_+(\D\cM)\simeq\mu_+(\D\cN)\otimes_\bbC R\simeq \D(\mu_+\cN)\otimes_\bbC R\simeq \D((\mu_+\cN)\otimes_\bbC R)\simeq \D(\mu_+\cM).\]

We then can apply the method in the proof of \cite[Theorem 1.5.8]{Bj} to get a finite resolution 
\[\cP^\bullet\to \cM\]
such that each $\cP^i=\shD_X\otimes_\sO \cL^i\otimes_\bbC R$ and each $\cL^i$ is a coherent $\sO_X$-module. We then consider the distinguish triangle 
\[\tau_{\le j}(\cP^\bullet)\to \cP^\bullet \to \tau_{>j}(\cP^\bullet)\xrightarrow{+1}\]
for each $j$, where $\tau$ is the canonical truncation functor. One then can do induction on the length of $\cP^\bullet$ and reduce to the case we have proved. 
\end{proof}
Suppose that $R\to S$ is a homomorphism between  commutative rings and that 
\[\iota\colon \Spec S\rightarrow\Spec R\]
is the induced morphism of schemes. We have the derived pullback functor defined for a $\sA_R$-module $\cM$ by
\[\L\iota^*(\cM)\coloneqq \cM\otimes^\L_R S.\]
Then $\L\iota^*(\cM)$ is a complex of relative $\shD$-modules over $S$.
\begin{prop}\label{prop:relbasechange}
Suppose that $\cM$ is a coherent relative $\shD$-module over $R$, that $\mu:X\to Y$ is a proper morhism between smooth complex varieties and that $\iota:\Spec S\to \Spec R$. Then we have a natural quasi-isomorphism
\[\L\iota^*(\mu_+(\cM))\stackrel{q.i.}{\simeq} \mu_+(\L\iota^*(\cM)).\]
\end{prop}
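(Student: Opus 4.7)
The plan is to deduce the base change identity from the projection formula for $\R\mu_*$ along the proper morphism $\mu$. The essential observation is that the ring $R$ sits in the center of $\sA_R$, acting as global scalars on every sheaf under consideration. Consequently, the operation $\L\iota^* = (\bullet)\otimes^\L_R S$ is \emph{external} on $X$ and $Y$: it commutes formally with $\otimes_\sO$, with $\otimes^\L_{\shD_X}$, and with the $\sO$-module pullback $\mu^*$ that enters the definition of the direct image functor.

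First I would fix a $K$-flat resolution $K^\bullet \to S$ of $S$ as a complex of $R$-modules, so that $\L\iota^*\cN \simeq \cN \otimes_R K^\bullet$ for any complex of relative $\shD$-modules $\cN$. Unwinding
\[
\mu_+(\cM) = \R\mu_*\bigl(\cM \otimes_\sO \omega_X \otimes^\L_{\shD_X} \mu^*(\shD_Y \otimes_\sO \omega_Y^{-1})\bigr),
\]
and using the three formal commutations just described, both $\L\iota^*(\mu_+\cM)$ and $\mu_+(\L\iota^*\cM)$ reduce to the question of whether
\[
\R\mu_*(\cF \otimes_R K^\bullet) \;\simeq\; \R\mu_*(\cF) \otimes_R K^\bullet,
\]
where $\cF = \cM \otimes_\sO \omega_X \otimes^\L_{\shD_X} \mu^*(\shD_Y \otimes_\sO \omega_Y^{-1})$ is the complex appearing in the definition of $\mu_+\cM$.

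This last identification is an instance of the classical projection formula: $K^\bullet$ is a complex of constant sheaves on $X$ pulled back from $\Spec R$, and $\mu$ is proper, so derived tensor product with $K^\bullet$ over $R$ commutes with $\R\mu_*$. Properness, together with coherence of $\cM$ (which forces $\mu_+\cM$ to have bounded, coherent cohomology as recalled in Section \ref{subsec:pffunctor}), is exactly what is needed for the relevant hypercohomology spectral sequences to converge and for the projection formula to hold at the level of derived categories. Composing with the previous reductions yields the desired quasi-isomorphism.

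The main obstacle I anticipate is strictly a bookkeeping one: the three derived constructions $\otimes^\L_R S$, $\otimes^\L_{\shD_X}$, and $\R\mu_*$ must be interleaved in a unique unambiguous way, which requires working with $K$-flat (or $K$-projective) resolutions of both $S$ over $R$ and of $\cM$ as an $\shD_X$-module, and verifying the standard compatibilities. Once this technical framework is in place, the proof is purely formal, relying only on the centrality of $R$ in $\sA_R$ and the properness of $\mu$.
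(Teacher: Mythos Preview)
Your proposal is correct and follows essentially the same approach as the paper: both arguments reduce the statement to the projection formula for $\R\mu_*$ (the paper cites \cite[Proposition 2.5.13]{KSbook}) together with the observation that $R$ lies in the center of $\sA_R$, so that $\otimes^\L_R S$ commutes with the internal operations $\otimes_\sO$, $\otimes^\L_{\shD_X}$, and $\mu^*$ defining $\mu_+$. You simply carry out these two steps in the reverse order from the paper and are more explicit about the $K$-flat resolution bookkeeping.
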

\begin{proof}
By \cite[Proposition 2.5.13]{KSbook}, we know 
\[\L\iota^*(\mu_+(\cM))\stackrel{q.i.}{\simeq} \R\mu_*(\cM\otimes_{\sO}\omega_X\otimes^\L_{\shD_X} \mu^*(
\shD_Y\otimes_\sO \omega_Y^{-1})\otimes^\L_R S).\]
Since $R$ as well as $S$ is in the center of $\sA_R$ (resp. $\sA_S$), 
\[\R\mu_*(\cM\otimes_{\sO}\omega_X\otimes^\L_{\shD_X} \mu^*(
\shD_Y\otimes_\sO \omega_Y^{-1})\otimes^\L_R S)\simeq \mu_+(\cM\otimes_R^\L S),\]
and the proof is done. 
\end{proof}

\section{The module $\shD_X[\bs]\bff^\bs$}\label{sec:rr}
\subsection{Relative characteristic cycles and Bernstein-Sato ideal}
Suppose that $X$ is a smooth algebraic variety (or a complex manifold) of dimension $n$ and $\bff=(f_1,\dots,f_r)$ is an $r$-tuple of regular functions (or germs of holomorphic functions in the analytic case) on $X$. We write by $D$ the divisor $(\prod_i f_i=0)$ and $j\colon U=X\setminus D\hookrightarrow X$ the open embedding. We denote by
\[\bff^\bs=\prod_{i=1}^r f_i^{s_i}.\]
and write the $\sO_X$-algebra by
\[j_*(\sO_U[\bs])\coloneqq j_*\sO_U\otimes_\bbC \bbC[\bs]=j_*(\sO_U\otimes_\bbC \bbC[\bs]).\]
With the natural actions of differential operators, $j_*(\sO_U[\bs]\bff^\bs)$, the free $j_*(\sO_U[\bs])$-module generated by $\bff^\bs$, is a left $\shD_X[\bs]$-module, but not necessarily a coherent $\shD_X[\bs]$-module, that is, we assign 
\[v(\bff^\bs)=(\sum_{i=1}^r v(f_i)/f_i)\bff^\bs\]
for vector fields $v$ on $X$. 

We then consider the (left) coherent $\shD_X[\bs]$-submodule generated by $\bff^\bs$, 
$$\shD_X[\bs]\bff^\bs\subseteq j_*(\sO_U[\bs]\bff^\bs).$$
In general, for $\ba=(a_1,\dots,a_r)\in \Z^r$ we also consider the submodule generated by $\bff^{\bs+\ba}=\prod_if_i^{s_i+a_i}$, denoted by $\shD_X[\bs]\bff^{\bs+\ba}$.
\begin{remark}\label{rmk:gaga}
In the analytic case, one can replace 
$j_*\sO_U$ by 
$$\sO^\an_X(*D){=}\sO^\an_X[1/\prod_{i=1}^rf_i],$$ the algebraic localization of $\sO^\an_X$ along $D$, and construct $\shD_{X}^{\an}[\bs]\bff^{\bs+\ba}$ similarly. 
\end{remark}


If $\ba\ge \bb \in \Z^r$ (that is, $a_i\ge b_i$ for every $i\in \{1,2,\cdots,r\}$), we obviously have 
\[\shD_X[\bs]\bff^{\bs-\bb} \subseteq \shD_X[\bs]\bff^{\bs-\ba}.\]
We then write the quotient module by 
\[\cM^{\ba,\bb}_\bff\coloneqq \dfrac{\shD_X[\bs]\bff^{\bs-\ba}}{\shD_X[\bs]\bff^{\bs-\bb}}.\]
As an $\sO_X$-module, it is supported on $D_{\ba - \bb}$, where $D_\ba=(\prod_{a_i\not=0}f_i=0)$.

The modules $\shD_X[\bs]\bff^{\bs-\ba}$ and $\cM^{\ba,\bb}_\bff$ are $\shD$-modules over $\bbC[\bs]$.
We denote the $\bbC[\bs]$-module annihilator of $\cM^{\ba,\bb}_\bff$ by
\[B^{\ba,\bb}_\bff\coloneqq B_{\cM^{\ba,\bb}_\bff}= \Ann_{\bbC[\bs]}(\cM^{\ba,\bb}_\bff )\subseteq \bbC[\bs],\]
called the Bernstein-Sato ideals of $\bff$ with indices $\ba\ge\bb$.

For simplicity, we write $\cM^{\mathbf{0},\bb}_\bff$ by $\cM^{\bb}_\bff$ and $B^{\mathbf 0, \bb}_\bff$ by $B^{\bb}_\bff$ when $\mathbf 0\ge \bb$, and $B^{-\mathbf 1}_\bff$ by $B_{\bff}$, where $\mathbf 1$ denotes the vector $(\underbrace{1,1,\dots,1}_r)$. We denote by $Z(B_\bff^{\ba,\bb})$ the zero locus of $B^{\ba,\bb}_\bff$ and by $Z_l(B_\bff^{\ba,\bb})$ the part of $Z(B_\bff^{\ba,\bb})$ with pure dimension $l$.

When $r=1$, $B_f$ is a principle ideal in $\bbC[s]$, we write the monic polynomial generating $B_f$ by $b_f(s)$; it is the usual Bernstein-Sato polynomial (or $b$-function) for $f$ (see \cite{KasBf}).    



The following theorem is essentially due to Maisonobe with Part(2) improved in \cite{BVWZ2}:
\begin{theorem}\label{thm:maisonmain}
We have \\
(1) $\shD_X[\bs]\bff^{\bs+\ba}$ is relative holonomic and $n$-pure for every $\ba\in \Z^r$;\\
(2) $\CC^\rel(\shD_X[\bs]\bff^{\bs+\ba})=\CC(j_*\sO_U)\times\bbC^r$;\\
(3) $\dim(\Chr(\cM^{\ba,\bb}_\bff))=n+r-1$ for every pair $\ba>\bb\in \Z^r$ and hence $$j(\cM^{\ba,\bb}_\bff)=n+1;$$\\
(4) $\cM^{\ba,\bb}_\bff$ is relative holonomic for every pair $\ba>\bb\in \Z^r$;\\
(5) $p_2(\Chr(\cM^{\ba,\bb}_\bff))=Z(B^{\ba,\bb}_\bff)$ for every pair $\ba>\bb\in \Z^r$, where 
$$p_2\colon T^*X\times \bbC^r\to \bbC^r$$
is the projection.
\end{theorem}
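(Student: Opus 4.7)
The plan is to establish the five parts in the order (1), (2), (4), (3), (5), with a log resolution providing the geometric input for (1), (2) and relative duality yielding (3).

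For (1), I would first reduce to the case $\ba=\mathbf{0}$: multiplication by $\bff^\ba$ combined with the affine shift $\bs\mapsto\bs-\ba$ identifies $\shD_X[\bs]\bff^{\bs+\ba}$ with $\shD_X[\bs]\bff^\bs$ up to a change of variables on $\bbC[\bs]$, which preserves relative holonomicity, purity, and Cohen-Macaulayness. Then I would choose a log resolution $\mu\colon Y\to X$ of $D$. On $Y$ each $\mu^*f_j$ is locally a monomial (times a unit) in normal crossing coordinates, so $\shD_Y[\bs](\mu^*\bff)^\bs$ is a cyclic $\shD_Y[\bs]$-module with defining relations of Koszul type $u_i\partial_{u_i}-\sigma_i(\bs)$ for certain linear forms $\sigma_i(\bs)$. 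From this presentation one directly reads off that the module is relative holonomic, $n$-Cohen-Macaulay (hence $n$-pure), and has relative characteristic cycle $\CC(j'_*\sO_{Y\setminus\mu^{-1}D})\times\bbC^r$. Since $\mu$ is proper, $\mu_+$ preserves coherence and by Theorem \ref{thm:pshfdualcmm} commutes with duality; identifying $\shD_X[\bs]\bff^\bs$ with the zeroth cohomology of $\mu_+$ applied to the normal crossing module (higher direct images reducing to the classical vanishing via Proposition \ref{prop:relbasechange}) transports $n$-Cohen-Macaulayness down to $X$, giving (1).

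For (2), the cycle on $Y$ transports by the direct image formula for characteristic cycles to $\CC(j_*\sO_U)\times\bbC^r$ on $X$, matching the classical pushforward description of $\CC(j_*\sO_U)$ via the same resolution. Part (4) follows from (1) applied to both outer terms of the short exact sequence
\[0\to\shD_X[\bs]\bff^{\bs-\bb}\to\shD_X[\bs]\bff^{\bs-\ba}\to\cM^{\ba,\bb}_\bff\to0,\]
together with the abelian-category property of relative holonomic modules \cite[3.2.4(1)]{BVWZ}.

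For the upper bound $j(\cM^{\ba,\bb}_\bff)\ge n+1$ in (3), I would use that the inclusion becomes an isomorphism upon specialization at generic $\bs_0\in\bbC^r$ (since $\shD_X\bff^{\bs_0-\ba_i}$ equals $\sO_X(*D)$ independently of $\ba_i$ at generic $\bs_0$), so $p_2(\Chr(\cM^{\ba,\bb}_\bff))$ is a proper subvariety of $\bbC^r$. By the relative holonomic structure from (4), each component of $\Chr(\cM^{\ba,\bb}_\bff)$ has the form $\Lambda\times S'$ with $\dim S'\le r-1$, so $\dim\Chr(\cM^{\ba,\bb}_\bff)\le n+r-1$, giving $j\ge n+1$ via Theorem \ref{thm:relCh}(1). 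For the lower bound, apply $\mathscr{R}hom_{\sA_R}(-,\sA_R)$ to the short exact sequence; since both outer terms are $n$-CM, the long exact sequence collapses to
\[0\to\Ext^n(\cM^{\ba,\bb}_\bff,\sA_R)\to\Ext^n(\cM_2,\sA_R)\to\Ext^n(\cM_1,\sA_R)\to\Ext^{n+1}(\cM^{\ba,\bb}_\bff,\sA_R)\to0,\]
with all other $\Ext^k$ of $\cM^{\ba,\bb}_\bff$ vanishing. The leftmost term vanishes by $j\ge n+1$, and the cokernel must be nonzero: otherwise the middle arrow is an isomorphism which dualizes via reflexivity on CM modules to $\cM_1\cong\cM_2$, contradicting $\cM^{\ba,\bb}_\bff\ne 0$ (the latter being equivalent, via a functional equation, to Sabbah's nonvanishing of the Bernstein-Sato ideal). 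Hence $j(\cM^{\ba,\bb}_\bff)=n+1$ and the dimension follows. Finally, (5) is immediate from (4) combined with Lemma \ref{lm:suppzbf}.

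The main obstacle, to my mind, is the multiplicity refinement in (2): Maisonobe's original argument yields relative holonomicity and the characteristic variety as a set, but the cycle-level equality requires careful tracking of multiplicities through the pushforward from the log resolution, which is the content of the BVWZ2 improvement over Maisonobe's original statement.
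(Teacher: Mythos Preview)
Your argument has a genuine gap: you claim that $\shD_X[\bs]\bff^\bs$ is $n$-Cohen--Macaulay in general, obtained by pushing forward from a log resolution. This is not true without further hypotheses. The paper only asserts $n$-\emph{purity} in (1), and later (Theorem~\ref{thm:freecm}) establishes $n$-Cohen--Macaulayness only when $\prod_i f_i$ is free and locally quasi-homogeneous; indeed the entire point of Theorem~\ref{thm:maindiasp} is that the Cohen--Macaulay condition is a genuine hypothesis. The pushforward argument you sketch does not land where you want: on $Y$ with normal crossings one has $\shD_Y[\bs](\mu^*\bff)^\bs = \widetilde j_*(\sO_U[\bs]\tilde\bff^\bs)$, and $\mu_+$ of this is $j_*(\sO_U[\bs]\bff^\bs)$ (cf.\ Lemma~\ref{lm:mupfj*}), which is strictly larger than $\shD_X[\bs]\bff^\bs$ and not even $\shD_X[\bs]$-coherent in general. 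There is no evident way to carve out the cyclic submodule $\shD_X[\bs]\bff^\bs$ as a direct image, so neither Cohen--Macaulayness nor purity is transported by your mechanism.

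This gap propagates to your proof of (3): the collapse of the long exact sequence to a four-term sequence, and the reflexivity argument showing $\Ext^{n+1}(\cM^{\ba,\bb}_\bff,\sA_R)\neq 0$, both require the outer terms to be $n$-CM, not merely $n$-pure. With only purity, the higher $\Ext$'s of the outer terms need not vanish and the argument breaks down. The paper's proof is not self-contained here: it defers (1), (2) (as varieties), and (3) to Maisonobe's original arguments in \cite{Mai} (R\'esultats 1--2, Proposition 14), which proceed by a direct analysis of relative characteristic varieties rather than by pushforward from a resolution, and cites \cite{BVWZ2} for the cycle-level refinement in (2). Your treatments of (4) via the abelian-category property and of (5) via Lemma~\ref{lm:suppzbf} are correct and match the paper's intent.
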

\begin{proof}
Part (1) and Part (2) for the relative characteristic varieties follow from \cite[R\'esultat 1 and Propostion 14]{Mai}. Part (3) follows from  R\'esultat 2 in \emph{loc. cit.} and Proposition \ref{prop:CCdec} below, while Part (4) follows from Lemma \ref{lm:suppzbf}.  See \cite[Theorem 4.3.4]{BVWZ2} for Part (2) in cycles. 
\end{proof}


\begin{prop}\label{prop:CCdec}
For $\ba,\bb,\bc\in \Z^r$, if $\ba\ge\bb\ge\bc$, then
\[\CC^\rel_{n+r-1}(\cM^{\ba,\bc}_\bff)=\CC^\rel_{n+r-1}(\cM^{\ba,\bb}_\bff)+\CC^\rel_{n+r-1}(\cM^{\bb,\bc}_\bff),\]
and 
\[Z_{r-1}(B^{\ba,\bc}_\bff)=Z_{r-1}(B^{\ba,\bb}_\bff)\bigcup Z_{r-1}(B^{\bb,\bc}_\bff).\]
\end{prop}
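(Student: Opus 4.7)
The plan is to derive both identities from a single short exact sequence. From the chain of inclusions $\shD_X[\bs]\bff^{\bs-\bc}\subseteq \shD_X[\bs]\bff^{\bs-\bb}\subseteq \shD_X[\bs]\bff^{\bs-\ba}$ arising from $\ba\ge\bb\ge\bc$, I obtain the tautological short exact sequence of relative $\shD$-modules
\[
0\to \cM^{\bb,\bc}_\bff \to \cM^{\ba,\bc}_\bff \to \cM^{\ba,\bb}_\bff \to 0,
\]
which is the engine for everything that follows.

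Next I would feed this sequence into Theorem \ref{thm:relCh}(2), which gives $\Chr(\cM^{\ba,\bc}_\bff)=\Chr(\cM^{\bb,\bc}_\bff)\cup\Chr(\cM^{\ba,\bb}_\bff)$ together with additivity of multiplicities at generic points of components of the middle term. By Theorem \ref{thm:maisonmain}(3), each of the three characteristic varieties is purely of dimension $n+r-1$, so no component of $\Chr(\cM^{\bb,\bc}_\bff)$ or $\Chr(\cM^{\ba,\bb}_\bff)$ can be properly contained in a component of the other; hence every such component actually appears as a component of $\Chr(\cM^{\ba,\bc}_\bff)$. The additivity of multiplicities then upgrades to the cycle-level identity
\[
\CC^\rel_{n+r-1}(\cM^{\ba,\bc}_\bff)=\CC^\rel_{n+r-1}(\cM^{\ba,\bb}_\bff)+\CC^\rel_{n+r-1}(\cM^{\bb,\bc}_\bff),
\]
which is the first claim.

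For the second claim, I pass to zero loci of Bernstein-Sato ideals via Theorem \ref{thm:maisonmain}(5), which identifies $Z(B^{\ba,\bb}_\bff)=p_2(\Chr(\cM^{\ba,\bb}_\bff))$, and similarly for the other two pairs. Applying relative holonomicity (each component of $\Chr$ has the form $\Lambda\times S$ with $\Lambda$ Lagrangian of dimension $n$ and $S\subseteq \bbC^r$ irreducible of dimension $r-1$), the image $p_2$ of each component is equidimensional of dimension $r-1$, so $Z_{r-1}(B^{\ba,\bb}_\bff)=Z(B^{\ba,\bb}_\bff)$ throughout. Taking $p_2$ of the characteristic variety identity yields
\[
Z_{r-1}(B^{\ba,\bc}_\bff)=Z_{r-1}(B^{\ba,\bb}_\bff)\cup Z_{r-1}(B^{\bb,\bc}_\bff).
\]

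There is no real obstacle beyond keeping track of what exactness gives at the level of cycles rather than merely varieties. The one point requiring some care is the purity input from Theorem \ref{thm:maisonmain}(3): without it, a component of $\Chr(\cM^{\bb,\bc}_\bff)$ of dimension $n+r-1$ could in principle be absorbed into a larger-dimensional component of $\Chr(\cM^{\ba,\bc}_\bff)$, and additivity of multiplicities (read off from Theorem \ref{thm:relCh}(2) at generic points of $\Chr(\cM^{\ba,\bc}_\bff)$) would not transparently recover the full cycle identity. Equidimensionality of all three characteristic varieties rules this out and makes the cycle-level identity immediate.
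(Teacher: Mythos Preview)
Your approach is the same as the paper's: the short exact sequence plus Theorem~\ref{thm:relCh}(2) for the cycle identity, then Theorem~\ref{thm:maisonmain}(5) for the zero-locus identity. The first statement is argued correctly.

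There is, however, a gap in your argument for the second statement. You assert that ``each component of $\Chr$ has the form $\Lambda\times S$ with $\dots$ $S\subseteq\bbC^r$ irreducible of dimension $r-1$'', and hence that $Z_{r-1}(B^{\ba,\bb}_\bff)=Z(B^{\ba,\bb}_\bff)$. But Theorem~\ref{thm:maisonmain}(3) only says $\dim\Chr(\cM^{\ba,\bb}_\bff)=n+r-1$; it does \emph{not} say the characteristic variety is equidimensional. In fact the paper explicitly allows for higher-codimension components of $Z(B_\bff)$ later on (see Maisonobe's estimate after Corollary~\ref{cor:codime1} and Proposition~\ref{prop:ZbfpureMf}, which is a \emph{conditional} statement). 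So the equality $Z_{r-1}(B)=Z(B)$ is not available here.

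The fix is immediate and does not change your strategy. Either (i) project the support of the first identity: since each $(n+r-1)$-dimensional component of $\Chr$ is $\Lambda\times S$ with $\dim\Lambda=n$ and hence $\dim S=r-1$, one has $p_2(\Ch^\rel_{n+r-1}(\cM^{\ba,\bb}_\bff))=Z_{r-1}(B^{\ba,\bb}_\bff)$, and applying $p_2$ to the support of the cycle identity gives the claim; or (ii) note that projecting the full characteristic-variety union already yields $Z(B^{\ba,\bc}_\bff)=Z(B^{\ba,\bb}_\bff)\cup Z(B^{\bb,\bc}_\bff)$, and since all three have dimension $\le r-1$, passing to the pure $(r-1)$-dimensional part commutes with this union.
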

\begin{proof}
Since $\ba\ge\bb\ge\bc$, we have a short exact sequence
\[0\to \cM^{\bb,\bc}_\bff\rarr \cM^{\ba,\bc}_\bff\rarr \cM^{\ba,\bb}_\bff\to 0\]
Since $\dim(\Chr(\cM^{\ba,\bb}_\bff))=n+r-1$, the first statement follows from Theorem \ref{thm:relCh}(2). The second then follows from Theorem \ref{thm:maisonmain}(5).
\end{proof}

\begin{theorem}\label{thm:sabgj}
(1) \cite{Sab2} There exists a polynomial
\[b(\bs)=\prod_{L,\alpha}(L\cdot\bs+\alpha)\in B_\bff\]
where the product is over finite many (possibly repeated) $L\in \Z^r_{\ge 0}$ and $\alpha\in \Q$.\\
(2) \cite{Gyo} There exists a polynomial
\[b(\bs)=\prod_{L,\alpha\in \Q_{>0}}(L\cdot\bs+\alpha)\in B_\bff,\]
where the product is over finite many $L\in \Z^r_{\ge 0}$ and $\alpha\in \Q_{>0}$.
\end{theorem}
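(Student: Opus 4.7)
The plan is to prove both parts by reducing to the normal crossing divisor case via a log resolution and carrying out an explicit monomial computation there. Let $\mu\colon Y\to X$ be a log resolution of $D=\bigcup_i(f_i=0)$, so that on $Y$ we have $\mu^*f_i=u_i\prod_k x_k^{a_{ik}}$ in local coordinates, with each $u_i$ a unit and $a_{ik}\in\Z_{\ge 0}$. For each irreducible component $E_k=(x_k=0)$ of the support of $\mu^*D$, set $L_k=(a_{1k},\ldots,a_{rk})\in\Z_{\ge 0}^r$.

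In the NCD setting on $Y$, units do not affect the Bernstein--Sato ideal, so one may replace each $\mu^*f_i$ by the monomial $g_i=\prod_k x_k^{a_{ik}}$. Writing $N_k=\sum_i a_{ik}=L_k\cdot\mathbf 1_r$ and using the identity $\partial_{x_k}^{N_k}(x_k^{L_k\cdot\bs+N_k})=\prod_{j=1}^{N_k}(L_k\cdot\bs+j)\cdot x_k^{L_k\cdot\bs}$ together with the commutativity of derivatives in distinct variables, a direct computation yields
\[
\Bigl(\prod_k\prod_{j=1}^{N_k}(L_k\cdot\bs+j)\Bigr)\cdot(\mu^*\bff)^\bs=\Bigl(\prod_k\partial_{x_k}^{N_k}\Bigr)\cdot(\mu^*\bff)^{\bs+\mathbf 1_r}.
\]
Thus $b(\bs)=\prod_k\prod_{j=1}^{N_k}(L_k\cdot\bs+j)$ lies in the Bernstein--Sato ideal of $\mu^*\bff$ on $Y$ and has exactly the form required in Sabbah's Part (1). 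All shifts $\alpha=j$ are strictly positive integers, so Gyoja's positivity in Part (2) comes for free from this local picture.

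To descend to $X$, I would apply the direct image functor $\mu_+$ to the modules $\shD_Y[\bs](\mu^*\bff)^{\bs+\ba}$ for $\ba\in\{\mathbf 0,\mathbf 1_r\}$, use that $\mu$ is an isomorphism over $U=X\setminus D$, and compare with $\shD_X[\bs]\bff^{\bs+\ba}$ through the common embedding into $j_*(\sO_U[\bs]\bff^\bs)$. Since $\bbC[\bs]$ is central in $\sA_{\bbC[\bs]}$, the $\bbC[\bs]$-annihilator behaves well under $\mu_+$, and the base change property of Proposition \ref{prop:relbasechange} then reduces the problem to a comparison on $X$ up to modules supported on the exceptional locus of $\mu$.

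The main obstacle is precisely this descent step: $\shD_X[\bs]\bff^\bs$ and the zeroth cohomology of $\mu_+(\shD_Y[\bs](\mu^*\bff)^\bs)$ are not a priori equal, only naturally isomorphic over $U$. One must show that the cokernel, supported on $D$, is also annihilated by a polynomial of the same prescribed form; this typically uses Theorem \ref{thm:maisonmain}(1) (relative holonomicity and $n$-purity), an induction on the dimension of the support, and the fact that Bernstein--Sato ideals for lower-dimensional strata are again of the required shape by the inductive hypothesis. An alternative, closer to Sabbah's original argument, avoids log resolution entirely and uses analytic continuation of the distribution $|\bff|^{2\bs}$ together with a Bernstein-style dimension count over good filtrations; this yields Part (1) directly but makes Part (2) less transparent, which is why the log-resolution approach is preferable for the combined statement.
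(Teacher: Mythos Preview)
The paper does not prove this theorem; it is quoted from the literature (Sabbah for Part~(1), Gyoja for Part~(2)) and used as a black box. So there is no proof in the paper to compare against.

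On your sketch itself: the upstairs monomial computation is correct and standard. The entire content lies in the descent along $\mu$, and there your proposal is a plan, not a proof. Two concrete problems.

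First, the appeal to Proposition~\ref{prop:relbasechange} is misplaced: that result is a base-change statement in the $\Spec R$ direction and says nothing about comparing $\cH^0\mu_+\bigl(\shD_Y[\bs](\mu^*\bff)^{\bs+\ba}\bigr)$ with $\shD_X[\bs]\bff^{\bs+\ba}$. The real obstruction is that the image of $\cH^0\mu_+\bigl(\shD_Y[\bs](\mu^*\bff)^{\bs+\mathbf 1_r}\bigr)$ inside $j_*(\sO_U[\bs]\bff^\bs)$ can strictly contain $\shD_X[\bs]\bff^{\bs+\mathbf 1_r}$, so the upstairs functional equation does not descend to one for $\bff$ on the nose.

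Second, your proposed fix by ``induction on the dimension of the support'' is not well-posed: the cokernel you would need to control is not of the form $\cM_{\bff'}^{-\mathbf 1}$ for some tuple $\bff'$ on a lower-dimensional space, so no inductive hypothesis of the stated type applies. Sabbah's actual argument works instead with the relative characteristic variety of $\shD_X[\bs]\bff^\bs$ and shows directly that its image in $\bbC^r$ is a union of affine-linear subspaces with integral slopes; Gyoja's positivity in Part~(2) requires a further argument via the $V$-filtration and is not a byproduct of the normal-crossing computation. Neither step is a formality, and the log-resolution picture, while a good heuristic for the \emph{shape} of the answer, does not by itself constitute a proof.
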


By the above theorem and Theorem \ref{thm:maisonmain} (3), (4) and (5) and Proposition \ref{prop:purezerob}, we immediately have:
\begin{coro}\label{cor:codime1}
There exist decompositions
\[\Ch^\rel_{n+r-1}(\cM^{-\mathbf 1}_\bff)=\sum_{L\ge{\mathbf 0},\alpha>0} \Lambda_{L,\alpha}\times (L\cdot\bs+\alpha=0)\]
and 
\[\Ch^\rel_{n+r-1}(\cM^{\bk,-\mathbf \bk}_\bff)=\sum_{L\ge{\mathbf 0},\alpha\in \Q} \Lambda_{L,\alpha}\times (L\cdot\bs+\alpha=0)\]
for all pairs $\ba\ge \bb$
and $\Lambda_{L,\alpha}$ are conic Lagrangian subvarieties in $T^*X$, and 
$$Z_{r-1}(B_\bff)=\bigcup_{L\ge{\mathbf 0},\alpha>0} (L\cdot\bs+\alpha=0)$$
and 
$$Z_{r-1}(B^{\ba,-\bb}_\bff)=\bigcup_{L\ge{\mathbf 0},\alpha\in \Q} (L\cdot\bs+\alpha=0).$$
\end{coro}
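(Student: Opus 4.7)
The plan is to combine the relative holonomicity and dimension count from Theorem \ref{thm:maisonmain}(3),(4) with the Sabbah--Gyoja structural result (Theorem \ref{thm:sabgj}) on generators of $B_\bff$. Relative holonomicity forces each irreducible component of the relative characteristic variety to split as a product $\Lambda \times S$ of a conic Lagrangian in $T^*X$ with an irreducible subvariety of $\bbC^r$; the dimension count identifies the top-dimensional $S$ as an irreducible hypersurface in $\bbC^r$; and Sabbah--Gyoja restricts any such hypersurface to the shape $(L \cdot \bs + \alpha = 0)$ with $L \in \Z^r_{\ge 0}$ and $\alpha$ of the claimed type.

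Explicitly, by Theorem \ref{thm:maisonmain}(4) the modules $\cM^{-\mathbf 1}_\bff$ and $\cM^{\ba,\bb}_\bff$ are relative holonomic, so every irreducible component of $\Chr$ has the form $\Lambda \times S$ with $\Lambda \subseteq T^*X$ an irreducible conic Lagrangian and $S \subseteq \bbC^r$ an irreducible closed subvariety. Since $\dim \Lambda \le n$ and $\dim S \le r$, Theorem \ref{thm:maisonmain}(3) forces every $(n+r-1)$-dimensional component to satisfy $\dim \Lambda = n$ and $\dim S = r-1$. By Theorem \ref{thm:maisonmain}(5), the projection $p_2$ sends $\Chr(\cM^{-\mathbf 1}_\bff)$ onto $Z(B_\bff)$; and by Theorem \ref{thm:sabgj}(2), $B_\bff$ contains a product $\prod(L\cdot\bs + \alpha)$ with $L \in \Z^r_{\ge 0}$ and $\alpha \in \Q_{>0}$, so $Z(B_\bff)$ is contained in a finite union of hyperplanes $(L\cdot\bs + \alpha = 0)$. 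Each top-dimensional $S$ is then an irreducible $(r-1)$-dimensional subvariety of this finite union, so it equals one of those hyperplanes. Grouping top-dimensional components by their common projection $S$ defines the cycle $\Lambda_{L,\alpha}$ and produces the decomposition of $\Ch^\rel_{n+r-1}(\cM^{-\mathbf 1}_\bff)$; taking $p_2$-images and extracting the codimension-$1$ part then gives $Z_{r-1}(B_\bff) = \bigcup(L \cdot \bs + \alpha = 0)$. The statement for $\cM^{\ba,\bb}_\bff$ is identical, with Theorem \ref{thm:sabgj}(1) replacing \ref{thm:sabgj}(2); this only guarantees $\alpha \in \Q$ rather than $\alpha \in \Q_{>0}$.

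I do not foresee a substantive obstacle: the corollary is essentially an assembly of previously stated results. The one bookkeeping point deserving care is that lower-dimensional components of $\Chr$ should not contribute spurious codimension-$1$ pieces to $Z_{r-1}(B)$, which is automatic because a component $\Lambda' \times S'$ of total dimension $< n+r-1$ must have $\dim S' \le r-2$. Proposition \ref{prop:purezerob} is not strictly necessary for the argument, but it offers a conceptual shortcut: granting purity of $\cM^{-\mathbf 1}_\bff$ (or $\cM^{\ba,\bb}_\bff$), one obtains equi-dimensionality of $Z(B)$ directly, so that separating out the top-dimensional components by hand becomes unnecessary.
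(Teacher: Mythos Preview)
Your approach is correct and essentially the same as the paper's, which simply cites Theorems \ref{thm:sabgj} and \ref{thm:maisonmain}(3)--(5) together with Proposition \ref{prop:purezerob}. Two small remarks. First, since each $\Lambda$ is a conic Lagrangian in $T^*X$ it has dimension exactly $n$, not merely $\le n$; this in fact strengthens your bookkeeping point, since any component $\Lambda'\times S'$ of total dimension $<n+r-1$ automatically has $\dim S'\le r-2$. Second, for the general pair $\ba\ge\bb$ your phrase ``with Theorem \ref{thm:sabgj}(1) replacing \ref{thm:sabgj}(2)'' is not literally sufficient: both parts of Theorem \ref{thm:sabgj} concern $B_\bff=B^{-\mathbf 1}_\bff$ only. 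To reach $B^{\ba,\bb}_\bff$ you should invoke the substitution $\bs\mapsto \bs-\ba$ (which transports Sabbah's polynomial into $B^{\ba,\ba-\mathbf 1}_\bff$, shifting each $\alpha$ by an integer $L\cdot\ba$) and then filter from $\ba$ down to $\bb$ by unit steps, using Proposition \ref{prop:CCdec} to assemble $Z_{r-1}(B^{\ba,\bb}_\bff)$ as a union of integer translates of pieces of $Z_{r-1}(B_\bff)$. This is routine but deserves one sentence.
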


For $i\in \{1,2,...,r\}$, we write by $\bee_i$ the unit vector with 1 at the position $i$. Corollary \ref{cor:codime1} and Proposition \ref{prop:CCdec} says that:
\begin{coro}\label{cor:slopei}
$Z_{r-1}(B^{-\bee_i}_\bff)$ is a finite union of divisors $(L\cdot\bs+\alpha=0)$ with $L>{\mathbf 0}$ and $\alpha>0\in\Q$.
\end{coro}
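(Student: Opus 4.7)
The approach is to deduce Corollary~\ref{cor:slopei} directly from Proposition~\ref{prop:CCdec} combined with the first decomposition of Corollary~\ref{cor:codime1}. The key observation is that $\mathbf 0 \ge -\bee_i \ge -\mathbf 1$, so Proposition~\ref{prop:CCdec} applies to the triple $(\ba,\bb,\bc)=(\mathbf 0,\,-\bee_i,\,-\mathbf 1)$ and yields
\[
Z_{r-1}(B_\bff) \;=\; Z_{r-1}(B^{-\mathbf 1}_\bff) \;=\; Z_{r-1}(B^{-\bee_i}_\bff) \,\cup\, Z_{r-1}(B^{-\bee_i,\,-\mathbf 1}_\bff).
\]
In particular, every codimension-one irreducible component of $Z(B^{-\bee_i}_\bff)$ is a component of $Z_{r-1}(B_\bff)$.

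Next, I would invoke Corollary~\ref{cor:codime1}, which writes $Z_{r-1}(B_\bff)$ as a finite union of hyperplanes of the form $(L\cdot\bs+\alpha=0)$ with $L\in\Z^r_{\ge 0}$ and $\alpha\in\Q_{>0}$. Restricting this collection to those components that actually lie in $Z_{r-1}(B^{-\bee_i}_\bff)$ produces a presentation of $Z_{r-1}(B^{-\bee_i}_\bff)$ as a finite union of hyperplanes of the same shape, with $\alpha$ still in $\Q_{>0}$. A genuine proper hyperplane in $\bbC^r$ forces its defining linear form $L\cdot\bs$ to be non-constant, so the case $L=\mathbf 0$ is automatically excluded; this gives $L>\mathbf 0$ in the paper's convention, as required.

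There is no real obstacle, since both Proposition~\ref{prop:CCdec} (whose proof goes through the additivity of characteristic cycles on the short exact sequence $0\to \cM^{-\bee_i,\,-\mathbf 1}_\bff\to \cM^{-\mathbf 1}_\bff\to \cM^{-\bee_i}_\bff\to 0$) and Corollary~\ref{cor:codime1} (which rests on Gyoja's positivity, Theorem~\ref{thm:sabgj}(2), applied to $B_\bff=B^{-\mathbf 1}_\bff$) have already done the substantive work. The only point to verify is that the chain inequality $\mathbf 0\ge -\bee_i\ge -\mathbf 1$ needed to invoke Proposition~\ref{prop:CCdec} holds, which is immediate; everything else is a bookkeeping argument that transports the sign constraint on $\alpha$ from the ambient $Z_{r-1}(B_\bff)$ down to the subvariety $Z_{r-1}(B^{-\bee_i}_\bff)$.
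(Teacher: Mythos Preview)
Your proposal is correct and follows exactly the route the paper indicates: the paper simply asserts that Corollary~\ref{cor:slopei} is an immediate consequence of Corollary~\ref{cor:codime1} and Proposition~\ref{prop:CCdec}, and your argument spells out precisely this deduction via the triple $(\mathbf 0,-\bee_i,-\mathbf 1)$.
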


For higher codimensional parts, Maisonobe gave the following estimate (see \cite[R\'esultat 3]{Mai}):
\begin{theorem}[Maisonobe]
Every irreducible component of $Z(B_\bff)$ of codimension $>1$ can be translated by an element of $\Z^r$ inside a component of $Z_{r-1}(B_\bff)$.
\end{theorem}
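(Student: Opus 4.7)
The approach I would take combines a shift identity for Bernstein-Sato ideals of the modules $\cM_\bff^{\ba,\bb}$ with the decomposition of codim-$1$ relative characteristic cycles from Proposition \ref{prop:CCdec}. First I would establish the shift identity: for $\ba\ge\bb$ in $\Z^r$, the rule $\bff^{\bs-\ba}\mapsto\bff^\bs$ extends to a $\shD_X$-linear isomorphism $\shD_X[\bs]\bff^{\bs-\ba}\xrightarrow{\sim}\shD_X[\bs]\bff^\bs$ intertwining $s_i$ on the source with $s_i+a_i$ on the target (a direct check using the defining action $v\cdot\bff^{\bs-\ba}=\sum_i(s_i-a_i)(v(f_i)/f_i)\bff^{\bs-\ba}$). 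This sends $\shD_X[\bs]\bff^{\bs-\bb}$ onto $\shD_X[\bs]\bff^{\bs+\ba-\bb}$, so descends to an isomorphism $\cM_\bff^{\ba,\bb}\xrightarrow{\sim}\cM_\bff^{\bb-\ba}$ with the analogous intertwining of the $\bbC[\bs]$-actions. In particular,
\[
Z(B_\bff^{\ba,\bb})=Z(B_\bff^{\bb-\ba})+\ba,
\]
with the same identity holding at the level of relative characteristic cycles.

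Next, I would fix a codim-$>1$ irreducible component $V$ of $Z(B_\bff)=Z(B_\bff^{-\mathbf 1})$ and examine $\cM_\bff^{-k\mathbf 1}$ for large $k$. The chain $\shD_X[\bs]\bff^\bs\supset\shD_X[\bs]\bff^{\bs+\mathbf 1}\supset\cdots\supset\shD_X[\bs]\bff^{\bs+k\mathbf 1}$, combined with the shift identity and the fact that subquotient annihilators contain the annihilator of the whole, gives $V-j\mathbf 1\subseteq Z(B_\bff^{-k\mathbf 1})$ for every $0\le j<k$. Meanwhile, iterating Proposition \ref{prop:CCdec} along the finer chain that adds one $\bee_i$ at a time and invoking Corollary \ref{cor:codime1} yields
\[
Z_{r-1}(B_\bff^{-k\mathbf 1})=\bigcup_{\bc,\,i}\bigl(Z_{r-1}(B_\bff^{-\bee_i})-\bc\bigr),
\]
a finite union of hyperplanes; each $Z_{r-1}(B_\bff^{-\bee_i})$ sits inside $Z_{r-1}(B_\bff)$ because $\shD_X[\bs]\bff^{\bs+\mathbf 1}\subseteq\shD_X[\bs]\bff^{\bs+\bee_i}$ forces $B_\bff\subseteq B_\bff^{-\bee_i}$.

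The crux is then to argue that for $k$ sufficiently large some translate $V-j\mathbf 1$ (with $j\in\{0,\dots,k-1\}$) falls inside the codim-$1$ locus $Z_{r-1}(B_\bff^{-k\mathbf 1})$. Granting this, the union decomposition of the previous paragraph produces an integer vector $\bc\in\Z^r$ and an index $i$ with $V+(\bc-j\mathbf 1)\subseteq Z_{r-1}(B_\bff^{-\bee_i})\subseteq Z_{r-1}(B_\bff)$, giving the desired integer translate of $V$ inside a component of $Z_{r-1}(B_\bff)$.

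The main obstacle will be this absorption step: one must rule out the possibility that all diagonal translates $V-j\mathbf 1$ continue to carve out codim-$>1$ components of $Z(B_\bff^{-k\mathbf 1})$ as $k$ grows. The heuristic is that such components come from the non-$(n+1)$-pure part of $\cM_\bff^{-k\mathbf 1}$, whose ``size'' (measured, for instance, by multiplicities or graded number) stays bounded under integer shifts of $\bs$, whereas the codim-$1$ locus of $Z(B_\bff^{-k\mathbf 1})$ grows with $k$. Making this precise requires a delicate analysis of the $j$-filtration on $\cM_\bff^{-k\mathbf 1}$ and its interaction with the shift automorphisms; this is the technical content of Maisonobe's argument in \cite[R\'esultat 3]{Mai}.
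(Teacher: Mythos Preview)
The paper does not prove this theorem at all: it is stated as Maisonobe's result with a bare citation to \cite[R\'esultat 3]{Mai}, and no argument is given. So there is no ``paper's own proof'' to compare against; the paper and your proposal ultimately agree in deferring to Maisonobe for the actual content.

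That said, your outline has one genuine gap and one minor slip. The slip: your shift formula has the wrong sign. If $b(\bs)\in B_\bff^{\bb-\ba}$ then $b(\bs+\ba)\in B_\bff^{\ba,\bb}$, so $Z(B_\bff^{\ba,\bb})=Z(B_\bff^{\bb-\ba})-\ba$, not $+\ba$; correspondingly the translates of $V$ that appear in $Z(B_\bff^{-k\mathbf 1})$ are $V+j\mathbf 1$, not $V-j\mathbf 1$. This is cosmetic.

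The genuine gap is the absorption step, which you yourself flag. Your heuristic---that the non-$(n{+}1)$-pure part of $\cM_\bff^{-k\mathbf 1}$ has ``bounded size'' while the codim-$1$ locus grows---is suggestive but not a proof, and it is not obvious how to make it rigorous using only the tools assembled in the paper (relative holonomicity, Theorem \ref{thm:relCh}, Proposition \ref{prop:CCdec}). In particular, nothing you have written forces an infinite family of distinct translates $V+j\mathbf 1$ to eventually be absorbed into hyperplane components: the higher-codimension locus of $Z(B_\bff^{-k\mathbf 1})$ could, a priori, also grow with $k$. Closing this gap is exactly the content of Maisonobe's argument, and you correctly identify that you have not reproduced it. So as a standalone proof your proposal is incomplete; as a reading guide to \cite{Mai} it is reasonable and in fact goes further than the paper itself, which offers no sketch whatsoever.
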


When $\cM_\bff^{-\mathbf1}$ is pure, by Proposition \ref{prop:purezerob} one can improve the result in the above theorem as follows.
\begin{prop}\label{prop:ZbfpureMf}
If the $\cM^{-\mathbf 1}_\bff$ is $(n+1)$-pure, then 
\[Z(B_\bff)=Z_{r-1}(B_\bff).\]
\end{prop}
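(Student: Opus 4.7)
The plan is to deduce the statement as a direct specialization of Proposition \ref{prop:purezerob}, applied to the module $\cM = \cM_\bff^{-\mathbf 1}$ with $k=1$. First I would verify that the hypotheses of that proposition are met. By Theorem \ref{thm:maisonmain}(4), $\cM_\bff^{-\mathbf 1}$ is relative holonomic over $\bbC[\bs]$, and by assumption it is $(n+1)$-pure. Thus both conditions needed to invoke Proposition \ref{prop:purezerob} with $k=1$ are in place.

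Invoking the conclusion of that proposition then tells me that $Z(B_\bff) = Z(B_{\cM_\bff^{-\mathbf 1}})$ is equidimensional of codimension $1$ in $\Spec\bbC[\bs] \cong \bbC^r$, i.e., it is of pure dimension $r-1$. Since $Z_{r-1}(B_\bff)$ is by definition the pure $(r-1)$-dimensional part of $Z(B_\bff)$, the equality $Z(B_\bff) = Z_{r-1}(B_\bff)$ follows immediately. (The case where $Z(B_\bff)$ is empty is vacuously fine, although one can already rule it out from Theorem \ref{thm:sabgj}.)

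There is no genuine obstacle here: the substantive work has already been carried out in Proposition \ref{prop:purezerob}, whose proof extracts an equidimensional relative characteristic variety from $(n+k)$-purity via \cite[A:IV. Theorem 4.11 and Proposition 3.7]{Bj} and then projects using Lemma \ref{lm:suppzbf}. The present proposition is simply the specialization of that mechanism to the module $\cM_\bff^{-\mathbf 1}$, whose relative holonomicity is guaranteed by Maisonobe's theorem. The only small point to keep in mind is the indexing convention: codimension $1$ in $\bbC^r$ corresponds to dimension $r-1$, which matches the subscript used in $Z_{r-1}(B_\bff)$ throughout this section.
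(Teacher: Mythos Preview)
Your proposal is correct and matches the paper's own argument: the proposition is stated in the paper as an immediate consequence of Proposition~\ref{prop:purezerob}, and you have supplied precisely that derivation, invoking Theorem~\ref{thm:maisonmain}(4) for relative holonomicity and then applying Proposition~\ref{prop:purezerob} with $k=1$.
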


\subsection{Set of slopes and generalization of the log canonical threshold}
We define the set of slopes
\[S_\bff\coloneqq\{L \textup{ primitive vectors in } \Z^r_{\ge0} \mid (L\cdot\bs+\alpha=0 )\subseteq Z_{r-1}(B_\bff) \textup{ for some } \alpha\in\Q\}\]
and define for $i\in \{1,2,...,r\}$
\[
S_{\bff,i}\coloneqq\{L\textup{ primitive vectors in } \Z^r_{\ge0}\mid (L\cdot\bs+\alpha=0)\subseteq Z_{r-1}(B^{-\bee_i}_\bff) \textup{ for some } \alpha\in\Q\}.\]
Since $L\in S_\bff$ are primitive in the lattice $\Z^r$, $S_\bff$ and $S_{\bff,i}$ are finite sets.  
By Proposition \ref{prop:CCdec}, we have 
\be\label{eq:uslopes}
S_\bff=\bigcup_{i=1}^rS_{\bff,i}.
\ee
For $i\in \{1,2,...,r\}$ and $L>0\in \Z^r$, we define 
\[ \kappa(L,i)=\kappa_\bff(L,i)\coloneqq
\begin{cases}
 \min\{\alpha\mid (L\cdot \bs+\alpha=0)\subseteq Z_{r-1}(B^{-\bee_i}_\bff)\}, \textup{ if } L\in S_{\bff,i}\\
\infty, \textup{ if } L\notin S_{\bff,i}
\end{cases}
\]
and
\[\kappa(L)=\kappa_\bff(L)\coloneqq\min\{\kappa(L,i)\mid i=1,2,\dots,r\}.\]
By Eq.\eqref{eq:uslopes} and Corollary \ref{cor:slopei}, if $L\in S_\bff$, then for some $i$
\be\label{eq:moffset}
0<\kappa(L)=\kappa(L,i)<\infty.
\ee
\begin{remark}
$\kappa(\cL)$ is the generalization of the log canonical threshold of the divisor $(f=0)$ when $r=1$. Indeed, when $r=1$, $S_f=\{1\}$ and $\kappa(1)$ is the log canonical threshold of the divisor $(f=0)$ (see for instance \cite{BSaito} and \cite{ELSVD}).
\end{remark}

\begin{lemma}\label{lm:codoneloc}
Suppose that $(L\cdot\bs+\alpha=0)$ is a divisor in $\bbC^r$ with $L>0\in \Z^r$ and $\alpha\in \Q$ and $q$ is the prime ideal generated by $L\cdot\bs+\alpha$. If  
\[(L\cdot\bs+\alpha=0)\subseteq Z_{r-1}(B_\bff^{\ba,\ba-e_i})\]
then $B_{\bff,q}^{\ba,\ba-e_i}=((L\cdot\bs+\alpha)^{n_{L,\alpha}})\subseteq \bbC[\bs]_q$
for some integer $n_{L,\alpha}>0$. Moreover, if 
\[(L\cdot\bs+\alpha=0)\not\subseteq Z_{r-1}(B_\bff^{\ba,\ba-e_i})\]
then $B_{\bff,q}^{\ba,\ba-\bee_i}=\bbC[\bs]_q$.
\end{lemma}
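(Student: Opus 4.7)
The plan is to reduce everything to one-variable commutative algebra in the local ring $\bbC[\bs]_q$. Since $L\cdot\bs+\alpha$ is an irreducible linear polynomial, the prime $q=(L\cdot\bs+\alpha)$ has height one, so $\bbC[\bs]_q$ is a discrete valuation ring with uniformizer $L\cdot\bs+\alpha$; every nonzero proper ideal of $\bbC[\bs]_q$ is therefore of the form $((L\cdot\bs+\alpha)^n)$ for some $n\ge 1$. By the localization property of Bernstein-Sato ideals (equation \eqref{eq:localbf} applied to $\cM^{\ba,\ba-\bee_i}_\bff$), the object to be computed is simply the ordinary localization $B^{\ba,\ba-\bee_i}_{\bff,q}=(B^{\ba,\ba-\bee_i}_\bff)_q$.

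Next, I would extract from the general theory two facts about the unlocalized ideal $B^{\ba,\ba-\bee_i}_\bff$. By Theorem \ref{thm:maisonmain}(3) and (4), the module $\cM^{\ba,\ba-\bee_i}_\bff$ is relative holonomic and satisfies $\dim\Chr(\cM^{\ba,\ba-\bee_i}_\bff)=n+r-1$. Lemma \ref{lm:suppzbf} then identifies $Z(B^{\ba,\ba-\bee_i}_\bff)=p_2(\Chr(\cM^{\ba,\ba-\bee_i}_\bff))$, which is a proper subvariety of $\bbC^r$; in particular $B^{\ba,\ba-\bee_i}_\bff\ne 0$. Moreover, relative holonomicity decomposes each irreducible component of $\Chr$ as $\Lambda\times S$, where $\Lambda\subset T^*X$ is a conic Lagrangian of dimension $n$ and $S$ is irreducible in $\bbC^r$; the dimension count then forces $\dim S=r-1$, so $Z(B^{\ba,\ba-\bee_i}_\bff)$ is pure of codimension one, i.e.\ $Z(B^{\ba,\ba-\bee_i}_\bff)=Z_{r-1}(B^{\ba,\ba-\bee_i}_\bff)$. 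Consequently the two hypotheses in the lemma translate, respectively, to $B^{\ba,\ba-\bee_i}_\bff\subseteq q$ and $B^{\ba,\ba-\bee_i}_\bff\not\subseteq q$.

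Finally I would conclude in the DVR $\bbC[\bs]_q$. In the first case, the localization $(B^{\ba,\ba-\bee_i}_\bff)_q$ is contained in $q\bbC[\bs]_q$ and is therefore a proper ideal; it remains nonzero because $B^{\ba,\ba-\bee_i}_\bff\ne 0$ and $\bbC[\bs]$ is an integral domain. Hence $B^{\ba,\ba-\bee_i}_{\bff,q}=((L\cdot\bs+\alpha)^{n_{L,\alpha}})$ for some integer $n_{L,\alpha}\ge 1$. In the second case, some element of $B^{\ba,\ba-\bee_i}_\bff$ lies outside $q$ and becomes a unit after localization, so $B^{\ba,\ba-\bee_i}_{\bff,q}=\bbC[\bs]_q$. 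The argument is essentially formal once the two structural inputs (nonvanishing of $B^{\ba,\ba-\bee_i}_\bff$ and pure codimension one of its zero locus) are secured from relative holonomicity; I do not foresee any substantive obstacle.
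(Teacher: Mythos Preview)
Your approach is the paper's: recognize $\bbC[\bs]_q$ as a DVR and classify its ideals. The paper's own proof is two sentences to the same effect.

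One step needs tightening. The assertion that ``the dimension count then forces $\dim S=r-1$'' and hence $Z(B_\bff^{\ba,\ba-\bee_i})=Z_{r-1}(B_\bff^{\ba,\ba-\bee_i})$ is not justified: Theorem~\ref{thm:maisonmain}(3) only gives the \emph{maximal} dimension of $\Chr$, so the correct conclusion is $\dim S\le r-1$. In fact $Z(B_\bff)$ can have components of higher codimension --- that is exactly the content of Maisonobe's estimate quoted after Corollary~\ref{cor:slopei}, and the reason Proposition~\ref{prop:ZbfpureMf} carries a purity hypothesis. Your translation of the second hypothesis to $B_\bff^{\ba,\ba-\bee_i}\not\subseteq q$ uses this unjustified equality.

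The repair is immediate and does not require relative holonomicity at all. Since $B_\bff^{\ba,\ba-\bee_i}\ne 0$, its zero locus is a \emph{proper} closed subset of $\bbC^r$, so every irreducible component has dimension at most $r-1$. An irreducible hypersurface contained in such a set is therefore automatically a component, hence lies in $Z_{r-1}$. Thus $B_\bff^{\ba,\ba-\bee_i}\subseteq q$ already forces $(L\cdot\bs+\alpha=0)\subseteq Z_{r-1}(B_\bff^{\ba,\ba-\bee_i})$, and your equivalence goes through. With this correction the rest of your argument is fine.
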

\begin{proof}
The second case is obvious by definition. In the first case, by definition we know $B_{\bff,q}^{\ba,\ba-\bee_i}\not=\bbC[\bs]_q.$ 
Since $\bbC[\bs]_q$ is a DVR and hence a PID, the required statement then follows.  
\end{proof}


\begin{prop}\label{prop:nonjump}
Let $L\in S_\bff$, $\alpha \in \Z$, and  $q$ be the prime ideal in $\C[\bs]$ generated by $L\cdot\bs+\alpha$. Then for every $\ba \in \Z^r$ such that $L \cdot  \ba < \kappa(L)-\alpha$, the modules
\[\shD_X[\bs]_q\bff^{\bs-\ba}\]
are all the same. 
\end{prop}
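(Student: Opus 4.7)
The plan is to reduce the proposition to a single-step assertion: for every $\ba \in \Z^r$ and every $i \in \{1,\dots,r\}$ such that both $\ba$ and $\ba+\bee_i$ satisfy $L \cdot \bc < \kappa(L) - \alpha$, the inclusion $\shD_X[\bs]_q \bff^{\bs-\ba-\bee_i} \subseteq \shD_X[\bs]_q \bff^{\bs-\ba}$ is an equality, or equivalently the quotient $\cM^{\ba+\bee_i,\ba}_{\bff,q}$ vanishes. The reduction to this single-step claim is a chain argument in the region $\cR = \{\bc \in \Z^r : L \cdot \bc < \kappa(L) - \alpha\}$: for $\ba,\ba' \in \cR$, set $\bc = \min(\ba,\ba')$ coordinatewise; since $L \ge 0$, $L \cdot \bc \le \min(L \cdot \ba, L \cdot \ba') < \kappa(L) - \alpha$, so $\bc \in \cR$. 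One traverses $\ba$ down to $\bc$ by subtracting unit vectors (only decreasing $L \cdot \bullet$) and $\bc$ up to $\ba'$ by adding unit vectors (each intermediate vector is coordinatewise $\le \ba'$), staying in $\cR$ at every step.

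For the single-step claim, Lemma \ref{lm:codoneloc}, applied with index pair $(\ba+\bee_i,\ba)$, says that $\cM^{\ba+\bee_i,\ba}_{\bff,q} = 0$ as soon as the hyperplane $(L \cdot \bs + \alpha = 0)$ is not a component of $Z_{r-1}(B^{\ba+\bee_i,\ba}_\bff)$. To compute that locus I would use the affine substitution $\bt = \bs - \ba - \bee_i$: the identification $\bff^{\bs-\ba-\bee_i} = \bff^\bt$ produces an $\shD_X$-linear isomorphism $\cM^{\ba+\bee_i,\ba}_\bff \simeq \cM^{-\bee_i}_\bff$ along which $s_j$ acts as $t_j + a_j + \delta_{ji}$, and hence the annihilator $B^{\ba+\bee_i,\ba}_\bff \subseteq \bbC[\bs]$ corresponds to $B^{-\bee_i}_\bff \subseteq \bbC[\bt]$. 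Consequently $Z_{r-1}(B^{\ba+\bee_i,\ba}_\bff) = Z_{r-1}(B^{-\bee_i}_\bff) + (\ba+\bee_i)$ inside $\bbC^r$.

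To conclude, I would invoke Corollary \ref{cor:slopei}: $Z_{r-1}(B^{-\bee_i}_\bff)$ is a finite union of hyperplanes $(L' \cdot \bt + \beta = 0)$ with primitive $L' \in \Z^r_{>0}$ and $\beta \in \Q_{>0}$. Since $L \in S_\bff = \bigcup_j S_{\bff,j}$ and every element of each $S_{\bff,j}$ is strictly positive by Corollary \ref{cor:slopei}, the vector $L$ itself is strictly positive. After translation, $(L \cdot \bs + \alpha = 0) \subseteq Z_{r-1}(B^{\ba+\bee_i,\ba}_\bff)$ holds exactly when $L \in S_{\bff,i}$ and $\alpha + L \cdot (\ba + \bee_i)$ equals one of the $\beta$'s that appear for $L$ in the decomposition of $Z_{r-1}(B^{-\bee_i}_\bff)$. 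The smallest such $\beta$ is $\kappa(L,i) \ge \kappa(L)$, while the hypothesis on $\ba + \bee_i$ gives $\alpha + L \cdot (\ba + \bee_i) < \kappa(L)$; so no such $\beta$ exists, the hyperplane is not a component, and the single-step claim follows.

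The main obstacle I anticipate is the shift identification in the second paragraph: verifying carefully that the $\bbC[\bs]$-annihilator $B^{\ba+\bee_i,\ba}_\bff$ really corresponds to $B^{-\bee_i}_\bff$ under the affine substitution $s_j = t_j + a_j + \delta_{ji}$, and that $\cM^{\ba+\bee_i,\ba}_\bff$ is genuinely $\shD_X$-isomorphic to $\cM^{-\bee_i}_\bff$ as formal modules built from the symbol $\bff^{\bullet}$. Once this is in place the rest is a direct comparison of hyperplanes and minima.
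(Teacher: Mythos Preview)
Your argument is correct and follows essentially the same route as the paper: reduce to a one-step comparison, use the translation identity $Z_{r-1}(B^{\ba+\bee_i,\ba}_\bff)=Z_{r-1}(B^{-\bee_i}_\bff)+(\ba+\bee_i)$, and then invoke Lemma~\ref{lm:codoneloc} together with the minimality of $\kappa(L,i)\ge\kappa(L)$. One trivial slip: the inclusion goes the other way, $\shD_X[\bs]_q\bff^{\bs-\ba}\subseteq\shD_X[\bs]_q\bff^{\bs-\ba-\bee_i}$, consistent with your quotient $\cM^{\ba+\bee_i,\ba}_{\bff,q}$; and the shift identity you worry about is exactly what the paper uses without comment (``since one is replacing $\bs$ by $\bs-\ba$'').
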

\begin{proof}
It suffices to prove that, for all $\ba$ satisfying $L \cdot  \ba < \kappa(L)-\alpha$, and for every $i\in \{1,\cdots,r\}$, we have 
\[\shD_X[\bs]_q\bff^{\bs-\ba} = \shD_X[\bs]_q\bff^{\bs-(\ba-\bee_i)}. \]
Indeed, if so, then all lattice points with $L \cdot  \ba < \kappa(L)-\alpha$ are connected by these equalities. 

The $\supseteq$ direction is clear, we now prove $\subseteq$.  Note that \[ Z_{r-1}(B_\bff^{\ba,\ba-\bee_i}) = Z_{r-1}(B_\bff^{-\bee_i}) + \ba, \] 
since one is replacing $\bs$ by $\bs-\ba$ in all the equations. Thus, the ``first" hyperplane in the $L$ direction that one encounters in $Z_{r-1}(B_\bff^{\ba,\ba-\bee_i})$ becomes $(L \cdot (\bs - \ba) + \kappa(L,i) =0)$. Since 
\[ \alpha < \kappa(L) - L \cdot \ba \leq \kappa(L,i) - L \cdot \ba, \]
we have \[(L\cdot\bs+\alpha=0)\not\subseteq Z_{r-1}(B_\bff^{\ba,\ba-\bee_i}).\]
By Lemma \ref{lm:codoneloc}, we obtain that $\cM^{\ba,\ba-\bee_i}_{\bff,q}=0$ and hence 
$$\shD_X[\bs]_q\bff^{\bs-\ba+\bee_i} = \shD_X[\bs]_q\bff^{\bs-\ba}.$$
\end{proof}

\subsection{The maximal extension and the $j_!$ extension}\label{subsec:j_*j_!}
The module $j_*(\sO_U[\bs]\bff^\bs)$, called the maximal extension, is the ambient module where all $\shD_X[\bs]\bff^{\bs+\ba}$ live inside. Globally, it is not coherent over $\shD_X[\bs]$. However, after localization we have:  
\begin{theorem}\label{thm:j_*loc}
Let $q$ be a prime ideal in $\bbC[\bs]$.
Then $j_*(\sO_U[\bs]\bff^{\bs})_q$ is $n$-Cohen-Macaulay and 
\[j_*(\sO_U[\bs]\bff^{\bs})_{q}=\shD_X[\bs]_q\bff^{\bs-\bk}\]
for all $k\gg 0$, where $\bk=(k,k,\dots,k)$.
\end{theorem}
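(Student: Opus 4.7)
Plan: My plan is to prove the two assertions in the theorem separately: first the stabilization
\[j_*(\sO_U[\bs]\bff^{\bs})_q = \shD_X[\bs]_q\bff^{\bs-\bk}\]
for $k\gg 0$, and then the $n$-Cohen-Macaulayness of this stable module.

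For the stabilization, I first observe that $j_*(\sO_U[\bs]\bff^\bs)=\bigcup_{k\ge 0}\shD_X[\bs]\bff^{\bs-k\mathbf 1}$, since any local section takes the form $(g(\bs)/(\prod_i f_i)^k)\,\bff^\bs=g(\bs)\,\bff^{\bs-k\mathbf 1}$ for some $k\ge 0$ and $g\in\sO_X[\bs]$. Since localization commutes with filtered colimits, it suffices to show $\cM^{(k+1)\mathbf 1, k\mathbf 1}_{\bff,q}=0$ for all $k\gg 0$. By Theorem \ref{thm:maisonmain}(4) the quotient is relative holonomic, and Lemma \ref{lm:suppzbf} identifies its $\bbC[\bs]$-support with $Z(B^{(k+1)\mathbf 1, k\mathbf 1}_\bff)$. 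A direct substitution $\bs\mapsto\bs-(k+1)\mathbf 1$ realizes this as the translate $Z(B_\bff)+(k+1)\mathbf 1$. By Theorem \ref{thm:sabgj}(1), $Z(B_\bff)$ sits inside a finite union of affine hyperplanes $\{L\cdot\bs+\alpha=0\}$ with $L\in\Z^r_{\ge 0}\setminus\{\mathbf 0\}$. For an irreducible $V(q)$ to land in a shifted hyperplane $L\cdot\bs+\alpha-(k+1)|L|=0$, the image of $L\cdot\bs$ in $\bbC[\bs]/q$ must equal the constant $(k+1)|L|-\alpha$; since $|L|>0$, this fixes $k$ uniquely per pair $(L,\alpha)$. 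With only finitely many such pairs occurring, $V(q)\not\subseteq Z(B^{(k+1)\mathbf 1, k\mathbf 1}_\bff)$ for all $k\gg 0$, yielding stabilization.

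For the Cohen-Macaulay part, the plan is to reduce to the normal crossing setting via a log resolution $\mu\colon Y\to X$ of $D=(\prod f_i=0)$. On $Y$, the pulled-back relative module $\sO_Y(*\mu^{-1}D)[\bs](\mu^*\bff)^\bs$ admits an explicit Koszul-type resolution over $\shD_Y[\bs]$ of length $n$ (via the logarithmic de Rham complex twisted by $\bs$), directly exhibiting it as $n$-Cohen-Macaulay on $Y$. Using Theorem \ref{thm:pshfdualcmm}, duality commutes with the proper pushforward; so if $\mu_+$ of the NCD-model is concentrated in cohomological degree $0$ and agrees with $j_*(\sO_U[\bs]\bff^\bs)$ after localizing at $q$, then $\D$ applied to the target is concentrated in degree $0$ as well, yielding $n$-CM. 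Controlling the higher direct images $R^i\mu_*$ to justify the concentration in degree $0$ can be approached via Proposition \ref{prop:relbasechange} combined with the standard vanishing for the proper birational $\mu$ in the absolute $\shD$-module setting.

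The main obstacle is this upgrade from $n$-purity (given by Theorem \ref{thm:maisonmain}(1)) to $n$-Cohen-Macaulayness: purity bounds only the minimal nonvanishing $\Ext^j$, while Cohen-Macaulayness demands vanishing in every degree $j\ne n$. Lemma \ref{lm:sescm} applied to the short exact sequences
\[0\to\shD_X[\bs]_q\bff^{\bs-k\mathbf 1}\to\shD_X[\bs]_q\bff^{\bs-(k+1)\mathbf 1}\to\cM^{(k+1)\mathbf 1, k\mathbf 1}_{\bff,q}\to 0\]
produces CM of the codimension-one quotients, not of the main terms, so a naive induction on $k$ does not close the loop; a genuinely global input such as the log-resolution/pushforward argument sketched above appears necessary.
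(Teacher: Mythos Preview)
Your stabilization argument is correct and more explicit than the paper's own proof, which simply cites \cite[Theorem 5.3(ii)]{WZ} for the case of a maximal ideal $m$ and then reduces the general prime ideal $q$ to that case by choosing any $m\supseteq q$ and localizing further (using that localization commutes with $\D$, cf.\ \S\ref{subsec:dualreald}). Your direct approach via the finitely many linear hyperplanes bounding $Z(B_\bff)$ works cleanly without that reduction.

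The Cohen--Macaulay sketch, however, has a genuine gap, and as written the argument is circular. You want to conclude: $M_Y:=\widetilde j_*(\sO_U[\bs]\tilde\bff^\bs)_q$ is $n$-CM on $Y$ by the NCD computation; $\mu_+ M_Y\simeq M_X:=j_*(\sO_U[\bs]\bff^\bs)_q$ is concentrated in degree $0$ because $\mu\circ\widetilde j=j$ and both open immersions are affine; therefore $\D(M_X)$ is concentrated in degree $0$. But Theorem~\ref{thm:pshfdualcmm} only gives $\D(M_X)\simeq\mu_+(\D M_Y)$, and knowing that $\mu_+ M_Y$ lives in degree $0$ tells you nothing about $\mu_+(\D M_Y)$. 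Unwinding, $\D M_Y=\widetilde j_!\bigl(\D_U((\sO_U[\bs]\bff^\bs)_q)\bigr)$, so $\mu_+(\D M_Y)=j_!\bigl(\D_U((\sO_U[\bs]\bff^\bs)_q)\bigr)$, and asking that this be a sheaf is \emph{exactly} the statement that $M_X$ is $n$-CM---you have returned to the starting point. Your appeal to ``standard vanishing for the proper birational $\mu$ in the absolute setting'' does not help: for a general holonomic $\shD_Y$-module, $\mu_+$ certainly has higher cohomology, and the base-change suggestion via Proposition~\ref{prop:relbasechange} does not lift pointwise vanishing back to the relative statement without an additional flatness or Nakayama-type input that you have not supplied.

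The paper itself does not give a self-contained argument for the CM part either---it defers to the proof of \cite[Theorem 5.3(ii)]{WZ} and to \cite[\S 5]{BVWZ2}---so you are right that an external input beyond $n$-purity is required; but the specific mechanism you propose does not close the loop.
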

\begin{proof}
For a maximal ideal $m$, the equality 
\[j_*(\sO_U[\bs]\bff^{\bs})_{m}=\shD_X[\bs]_{m}\bff^{\bs-\bk}\]
is a special case of \cite[Theorem 5.3(ii)]{WZ}. The $n$-Cohen-Macaulayness is contained in its proof in \emph{loc. cit.} See also \cite[\S 5]{BVWZ2}.

In general, we take a maximal ideal so that $q\subseteq m$. Since duality and localization commute (cf. \S \ref{subsec:dualreald}), the proof is done. 
\end{proof}
For a prime ideal $q\subset \bbC[\bs]$, we now define 
\[j_!(\sO_U[\bs]\bff^\bs)_q\coloneqq \D_X (j_*(\D_U((\sO_U[\bs]\bff^\bs)_q).\]
One can easily see that $\sO_U[\bs]_{q}\bff^\bs$ 
is $n\textup{-Cohen-Macaulay}$ 
(cf. \cite[Lemma 5.3.1]{BVWZ2}). Therefore, by Theorem \ref{thm:j_*loc}, $j_!(\sO_U[\bs])_{q}$ is a sheaf (instead of a complex) and it is $n$-Cohen-Macaulay. Since $\D\circ\D$ is identity, using the adjunction pair $(j^{-1},j_*)$, we obtain a natural morphism 
\[j_!(\sO_U[\bs]\bff^\bs)_q\rightarrow j_*(\sO_U[\bs]\bff^\bs)_q.\]
\begin{theorem}\label{thm:j_!loc}

Let $q$ be a prime ideal in $\bbC[\bs]$.
Then $j_!(\sO_U[\bs]\bff^\bs)_{m_\bal}$ is $n$-Cohen-Macaulay, the natural morphism 
\[j_!(\sO_U[\bs]\bff^\bs)_q\hookrightarrow j_*(\sO_U[\bs]\bff^\bs)_{q}\]
is injective and 
\[j_!(\sO_U[\bs]\bff^\bs)_{q}=\shD_X[\bs]_{q}\bff^{\bs+\bk}\]
for all $k\gg 0$, where $\bk=(k,k,\dots,k)$. 
\end{theorem}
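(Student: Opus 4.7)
The plan is to peel off the definition
\[j_!(\sO_U[\bs]\bff^\bs)_q \;=\; \D_X\bigl(j_*(\D_U((\sO_U[\bs]\bff^\bs)_q))\bigr)\]
in three stages, applying Theorem \ref{thm:j_*loc} and the $\bs \mapsto -\bs$ symmetry of the construction. First, on $U$ the module $(\sO_U[\bs]\bff^\bs)_q$ is a rank-one flat $\sO_U[\bs]_q$-bundle with connection $\nabla = d + \sum_i s_i\, d\log f_i$; a Spencer-type computation together with the $\omega_U^{-1}[n]$ twist in Definition \ref{def:purecm}(1) identifies
\[\D_U((\sO_U[\bs]\bff^\bs)_q) \;\simeq\; (\sO_U[\bs]\bff^{-\bs})_q,\]
a sheaf concentrated in degree zero, and its $n$-Cohen-Macaulayness is recorded in the excerpt via \cite[Lemma 5.3.1]{BVWZ2}.

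Second, the module $(\sO_U[\bs]\bff^{-\bs})_q$ has exactly the same form as $(\sO_U[\bs]\bff^\bs)_q$ under the involution $\sigma\colon\bs\mapsto -\bs$ of $\bbC[\bs]$, so applying Theorem \ref{thm:j_*loc} to $\sigma^*$ of the module with $\sigma(q)$ in place of $q$ yields, for $k\gg 0$,
\[j_*((\sO_U[\bs]\bff^{-\bs})_q) \;=\; \shD_X[\bs]_q\bff^{-\bs-\bk},\]
with this pushforward $n$-Cohen-Macaulay. Since $\D_X$ preserves $n$-Cohen-Macaulayness and commutes with localization (cf.\ \S\ref{subsec:dualreald}), $j_!(\sO_U[\bs]\bff^\bs)_q$ is then an honest coherent sheaf concentrated in degree zero and is $n$-Cohen-Macaulay.

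Third, I would establish the explicit identification
\[\D_X(\shD_X[\bs]_q\bff^{-\bs-\bk}) \;\simeq\; \shD_X[\bs]_q\bff^{\bs+\bk}\]
for $k\gg 0$. Both sides are $n$-pure coherent $\sA_{\bbC[\bs]_q}$-modules---the right by Theorem \ref{thm:maisonmain}(1), the left by the preceding step---whose restrictions to $U$ both equal $(\sO_U[\bs]\bff^\bs)_q$. I expect this to be the main obstacle of the proof. My intended strategy is to run the Spencer--Koszul calculation of \cite[\S 5]{BVWZ2} in parallel for the dual, tracking that the defining relations on the generator $\bff^{-\bs-\bk}$ dualize to the defining relations on $\bff^{\bs+\bk}$; alternatively one could characterize both as the unique $n$-pure coherent extension of $(\sO_U[\bs]\bff^\bs)_q$ with a prescribed generator inside the ambient maximal extension $j_*(\sO_U[\bs]\bff^\bs)$.

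Finally, once this identification is in place, the natural morphism $j_!(\sO_U[\bs]\bff^\bs)_q \to j_*(\sO_U[\bs]\bff^\bs)_q$ becomes the literal inclusion $\shD_X[\bs]_q\bff^{\bs+\bk} \hookrightarrow \shD_X[\bs]_q\bff^{\bs-\bk}$ induced by the evident relation $\bff^{\bs+\bk} = \bff^{2\bk}\cdot\bff^{\bs-\bk}$ inside $j_*(\sO_U[\bs]\bff^\bs)_q = \shD_X[\bs]_q\bff^{\bs-\bk}$. Restricting to $U$ and invoking uniqueness of the adjunction-induced map shows it coincides with this inclusion, so injectivity is automatic.
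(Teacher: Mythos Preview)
The paper's own proof is much terser than your proposal: it simply invokes \cite[Theorem 5.4(iii)]{WZ} for the case of a maximal ideal $m$ (giving both injectivity and the identification $j_!(\sO_U[\bs]\bff^\bs)_m = \shD_X[\bs]_m\bff^{\bs+\bk}$), and then passes to an arbitrary prime $q$ by choosing $m \supseteq q$ and localizing further, using that duality commutes with localization (\S\ref{subsec:dualreald}). Your route---unpacking $j_!$ via the involution $\sigma\colon\bs\mapsto-\bs$, reducing to Theorem \ref{thm:j_*loc} for the $j_*$ side, and then dualizing back---is a legitimate and more self-contained strategy, and your steps 1, 2, and 4 are essentially correct.

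However, your step 3 is where all the content lies, and neither of your two sketches resolves it. The Spencer--Koszul computation you propose is precisely what is carried out in the cited references \cite{WZ} and \cite[\S5]{BVWZ2}, so following that route you are re-deriving the external input the paper cites rather than bypassing it. Your alternative---characterizing $\shD_X[\bs]_q\bff^{\bs+\bk}$ as ``the unique $n$-pure coherent extension of $(\sO_U[\bs]\bff^\bs)_q$ with a prescribed generator''---does not work as stated: $n$-purity alone does not single out a unique extension (both $j_*$ and $j_!$ are $n$-pure by Theorems \ref{thm:j_*loc} and \ref{thm:j_!loc}, and they are generally distinct), and the added clause about a ``prescribed generator inside the ambient maximal extension'' is not a well-posed condition without already knowing the module sits inside $j_*$, which is part of what you are proving. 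A related issue arises in step 4: even after establishing the abstract isomorphism in step 3, you must check it is compatible with the adjunction morphism $j_!\to j_*$; your appeal to restriction to $U$ is the right idea, but uniqueness of an extension of a map from $U$ requires control on torsion that again depends on the structure you are trying to establish.
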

\begin{proof}
For a maximal ideal $m$, the injectivity and the identity
\[j_!(\sO_U[\bs]\bff^\bs)_{m}=\shD_X[\bs]_{m}\bff^{\bs+\bk}\]
for all $k\gg 0$ follows from \cite[Theorem 5.4(iii)]{WZ}. One then takes a maximal ideal $m$ such that $q\subseteq m$ and takes further localization.
\end{proof}

\section{Diagonal Specializations}\label{sec:spe}
We continue using notations introduced in \S \ref{sec:rr} and suppose that $\bff=(f_1,\dots,f_r)$ is a $r$-tuple of regular functions (or holomorphic function) on a smooth complex variety ( or a complex manifold) $X$ for some $r\ge 2$.

Let us consider the following diagonal embedding:
\[ \Delta: \C^{r-1} \to \C^r, \bw \mapsto \bs, \quad s_j = w_j \textup{ for } j\le r-1 \textup{ and } s_r=w_{r-1}, \]
where $\bw=(w_1,\dots,w_{r-1})$ is the algebraic coordinates of the affine space $\bbC^{r-1}$.
We write by 
\[\bg=(f_1,\dots,f_{r-2}, f_{r-1}f_{r})\textup{ and }\bg^\bw=\prod_{i=1}^{r-1} g_i^{w_i}.\]

We assume that $\cM$ is a relative $\shD$-module over $\bbC^r$.  Since it is particularly a $\bbC[\bs]$-module, we consider the pullback complex 
$$\L\Delta^*\cM\coloneqq\cM\otimes^L_{\bbC[\bs]}\C[\bw]$$
is a complex of relative $\shD$-module over $\bbC^{r-1}$.
We also write 
\[\Delta^*\cM=\L^0\Delta^*\cM\simeq\cM\otimes_{\bbC[\bs]}\C[\bw].\]

\begin{lemma}\label{lm:pbj_*}
With notations as above, we have 
$$\Delta^*(j_*(\sO_U[\bs]_{}\bff^\bs))\simeq j_*(\sO_U[\bw]\bg^\bw)$$ 
as $\shD_X[\bw]$-modules.
\end{lemma}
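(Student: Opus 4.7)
The plan is to write down the obvious specialization map and verify, using the freeness of the module over $j_*(\sO_U[\bs])$ and the Leibniz rule, that it is an isomorphism of $\shD_X[\bw]$-modules.

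First, I would unwind the definitions on both sides. By construction, $j_*(\sO_U[\bs]\bff^\bs)$ is freely generated over $j_*(\sO_U[\bs])=j_*\sO_U\otimes_\bbC \bbC[\bs]$ by the single symbol $\bff^\bs$. Since tensoring commutes with extension of scalars,
\[
\Delta^*(j_*(\sO_U[\bs]\bff^\bs))
= j_*(\sO_U[\bs]\bff^\bs)\otimes_{\bbC[\bs]}\bbC[\bw]
\simeq (j_*\sO_U\otimes_\bbC \bbC[\bw])\cdot e,
\]
where $e$ is the image of $\bff^\bs$. On the other hand, $j_*(\sO_U[\bw]\bg^\bw)$ is freely generated over $j_*\sO_U\otimes_\bbC \bbC[\bw]$ by $\bg^\bw$. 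So I define the $j_*(\sO_U[\bw])$-linear map by sending $e\mapsto \bg^\bw$; it is automatically an isomorphism of $j_*(\sO_U[\bw])$-modules.

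The content of the lemma is that this map is compatible with the $\shD_X$-action, and this is exactly what the identification $\Delta^*\bs=\bw$ (i.e.\ $s_j=w_j$ for $j\le r-1$ and $s_r=w_{r-1}$) enforces. For a vector field $v$ on $X$,
\[
v(\bff^\bs) = \Bigl(\sum_{i=1}^{r} s_i\,\frac{v(f_i)}{f_i}\Bigr)\bff^\bs .
\]
After applying $\Delta^*$ and substituting $s_j\mapsto w_j$ for $j\le r-1$ and $s_r\mapsto w_{r-1}$, the coefficient becomes
\[
\sum_{j=1}^{r-2} w_j\,\frac{v(f_j)}{f_j} + w_{r-1}\Bigl(\frac{v(f_{r-1})}{f_{r-1}}+\frac{v(f_r)}{f_r}\Bigr)
= \sum_{j=1}^{r-2} w_j\,\frac{v(f_j)}{f_j} + w_{r-1}\,\frac{v(f_{r-1}f_r)}{f_{r-1}f_r},
\]
where in the last equality I use the Leibniz rule $v(f_{r-1}f_r)=v(f_{r-1})f_r+f_{r-1}v(f_r)$. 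This coefficient is precisely $v(\bg^\bw)/\bg^\bw$, so the map intertwines the $\shD_X$-actions. Combined with the obvious $\bbC[\bw]$-linearity, this shows $\shD_X[\bw]$-linearity.

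I do not expect any genuine obstacle here: the lemma is really the statement that the presentation of the maximal extension as a free rank-one $j_*(\sO_U[\bs])$-module behaves well under any base change of the parameter ring, and the only point to check by hand is the Leibniz-rule compatibility above. The lemma will later combine with Proposition~\ref{prop:relbasechange} (and the $j_!$-version via duality) to transport good filtrations and characteristic cycle information through the specialization.
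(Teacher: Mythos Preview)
Your argument is correct and follows the same approach as the paper: both use the rank-one freeness of $j_*(\sO_U[\bs]\bff^\bs)$ over $j_*(\sO_U[\bs])$ to identify the underlying $j_*(\sO_U[\bw])$-modules, and then check that the $\shD_X$-actions match. The only difference is that the paper declares the $\shD_X$-compatibility ``obvious,'' whereas you spell out the Leibniz-rule computation for a vector field $v$ explicitly.
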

\begin{proof}
Since $\sO_U[\bs]\bff^\bs$ is a free $\sO_U[\bs]$-module generated by $\bff^\bs$ and the functor $j_*$ is exact, $\Delta^*(j_*(\sO_U[\bs]\bff^\bs))$ is a free $j_*(\sO_U[\bw])$-module of rank 1 and hence  
\[\Delta^*(j_*(\sO_U[\bs]\bff^\bs))\simeq j_*(\sO_U[\bw]\bg^\bw).\]
It is obvious that the $P$-actions on both $\Delta^*(j_*(\sO_U[\bs]\bff^\bs)$ and $j_*(\sO_U[\bw]\Delta^*\bff^\bs)$ are compatible with the above isomorphism for differential operators $P\in \shD_X$, and hence the above isomorphism of $j_*(\sO_U[\bw])$-modules is indeed an isomorphism of $\shD_X[\bw]$-modules. 

\end{proof}

\begin{lemma}\label{lm:sujpb}
With notations in Lemma \ref{lm:pbj_*}, we have an natural surjective morphisms 
\[\Delta^*(\shD_X[\bs]\bff^{\bs+\ba})\to \shD_X[\bw](\bff^\ba\cdot\bg^\bw).\]
\end{lemma}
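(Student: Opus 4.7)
The plan is straightforward: I would construct the surjection by applying the right-exact functor $\Delta^* = (-)\otimes_{\bbC[\bs]}\bbC[\bw]$ to the natural inclusion
\[\iota\colon \shD_X[\bs]\bff^{\bs+\ba}\hookrightarrow j_*(\sO_U[\bs]\bff^{\bs}),\]
which is valid because $\bff^{\bs+\ba}=\bff^\ba\cdot \bff^\bs$ already lies inside the maximal extension. This yields a $\shD_X[\bw]$-linear morphism
\[\Delta^*(\iota)\colon \Delta^*(\shD_X[\bs]\bff^{\bs+\ba})\to \Delta^*(j_*(\sO_U[\bs]\bff^{\bs})),\]
and Lemma \ref{lm:pbj_*} identifies the target with $j_*(\sO_U[\bw]\bg^\bw)$ as a $\shD_X[\bw]$-module.

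Next I would trace the image of the canonical generator. The source is generated as a $\shD_X[\bw]$-module by the single element $\bff^{\bs+\ba}\otimes 1$, and under the specialization $\bs\mapsto \Delta(\bw)$ encoded by Lemma \ref{lm:pbj_*}, this element is sent to $\bff^\ba\cdot \bg^\bw$, since $\prod_i f_i^{s_i+a_i}$ specializes to $\prod_i f_i^{a_i}\cdot f_1^{w_1}\cdots f_{r-2}^{w_{r-2}}(f_{r-1}f_r)^{w_{r-1}}=\bff^\ba\cdot \bg^\bw$. Consequently the image of $\Delta^*(\iota)$ is exactly the $\shD_X[\bw]$-submodule $\shD_X[\bw](\bff^\ba\cdot \bg^\bw)\subseteq j_*(\sO_U[\bw]\bg^\bw)$, and restricting the codomain of $\Delta^*(\iota)$ to this image gives the desired surjection.

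There is no serious obstacle here: the lemma is a formal consequence of right-exactness of the tensor product together with Lemma \ref{lm:pbj_*} and the observation that $\Delta^*$ of a cyclic $\shD_X[\bs]$-module is cyclic over $\shD_X[\bw]$ with a canonical generator obtained by tensoring the original generator with $1$. The only mild subtlety worth flagging is that $\Delta^*$ is not left exact, so the constructed map need not be an isomorphism onto $j_*(\sO_U[\bw]\bg^\bw)$, and the surjection may well have a nontrivial kernel; but this is consistent with the statement, which only claims surjectivity onto the cyclic submodule $\shD_X[\bw](\bff^\ba\cdot\bg^\bw)$.
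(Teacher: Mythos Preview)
Your proposal is correct and follows essentially the same approach as the paper: apply $\Delta^*$ to the inclusion $\shD_X[\bs]\bff^{\bs+\ba}\hookrightarrow j_*(\sO_U[\bs]\bff^\bs)$, use Lemma \ref{lm:pbj_*} to identify the target with $j_*(\sO_U[\bw]\bg^\bw)$, and observe that the image is generated by $\bff^\ba\cdot\bg^\bw$. Your write-up is in fact more detailed than the paper's, which simply says the image ``is obviously generated by $\bff^\ba\cdot\bg^\bw$''; your remark about the possible kernel (due to non-left-exactness of $\Delta^*$) is also a useful clarification not made explicit in the paper.
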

\begin{proof}
Pulling back the inclusion
\[\shD_X[\bs]\bff^{\bs+\ba)}\hookrightarrow j_*(\sO_U[\bs]\bff^\bs),\]
by Lemma \ref{lm:pbj_*} we get a natural morphism of $\shD_X[\bw]$-modules,
\[\Delta^*(\shD_X[\bs]\bff^{\bs+\ba})\rightarrow j_*(\sO_U[\bw]\bg^\bw)\simeq\Delta^*(j_*(\sO_U[\bs]\bff^\bs))),\]
whose image is obviously generated by $\bff^\ba\cdot\bg^\bw$. 

\end{proof}

\begin{theorem}\label{thm:diagpbcm}
With notations in Lemma \ref{lm:pbj_*}, we assume that $\shD_X[\bs]\bff^\bs$ is $n$-Cohen-Macaulay over $\shD_X[\bs]$. Then 
$\shD_X[\bw]\bg^\bw$ is $n$-Cohen-Macaulay over $\shD_X[\bw]$ and
\[Z(B_{\bg})=\Delta^{-1}(Z(B_{\bff})).\]
\end{theorem}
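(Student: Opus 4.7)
The proof has three steps: (i) show that $a:=s_{r-1}-s_r$ acts injectively on $\cM:=\shD_X[\bs]\bff^\bs$ and identify $\Delta^*\cM$ with $\shD_X[\bw]\bg^\bw$; (ii) upgrade Cohen-Macaulayness across the base change $\bbC[\bs]\to\bbC[\bw]=\bbC[\bs]/(a)$; and (iii) propagate the identification to the torsion quotient $\cM^{-\mathbf 1}_\bff$ to deduce the zero-locus equality.

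For step (i), $n$-Cohen-Macaulayness of $\cM$ implies $n$-purity, and Theorem \ref{thm:maisonmain}(2) gives $\CC^\rel(\cM)=\CC(j_*\sO_U)\times\bbC^r$, which contains no component inside $T^*X\times V(a)$. Any nonzero kernel of $a$ on $\cM$ would be a submodule with characteristic variety of dimension $\le n+r-1$, contradicting $n$-purity; thus $\ker(a\colon\cM\to\cM)=0$. Lemma \ref{lm:sescm} applied to $0\to\cM\xrightarrow{a}\cM\to\Delta^*\cM\to 0$ then shows $\Delta^*\cM$ is $(n+1)$-Cohen-Macaulay over $\shD_X[\bs]$. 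Because $V(a)$ meets every component of $\Chr(\cM)$ transversally, $\CC^\rel(\Delta^*\cM)$ viewed over $\shD_X[\bw]$ equals $\CC(j_*\sO_U)\times\bbC^{r-1}$ with matching multiplicities, agreeing with $\CC^\rel(\shD_X[\bw]\bg^\bw)$ from Theorem \ref{thm:maisonmain}(2) applied to $\bg$. Consequently the surjection of Lemma \ref{lm:sujpb} at $\ba=\mathbf 0$ has kernel with vanishing characteristic cycle and is thus an isomorphism.

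For step (ii), set $E:=\Ext^n_{\shD_X[\bs]}(\cM,\shD_X[\bs])$. Cohen-Macaulayness of $\cM$ means $\mathscr{R}hom_{\shD_X[\bs]}(\cM,\shD_X[\bs])\simeq E[-n]$. Since $a$ is a non-zero-divisor on $\cM$, $\Delta^*\cM=\cM\otimes^\L_{\bbC[\bs]}\bbC[\bw]$, and by adjunction
\[\mathscr{R}hom_{\shD_X[\bw]}(\Delta^*\cM,\shD_X[\bw])\simeq\mathscr{R}hom_{\shD_X[\bs]}(\cM,\shD_X[\bw]).\]
Resolving $\shD_X[\bw]\simeq[\shD_X[\bs]\xrightarrow{a}\shD_X[\bs]]$ reduces the right-hand side to a two-term complex whose cohomologies are $\ker(a:E\to E)$ in degree $n-1$ and $E/aE$ in degree $n$. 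The crucial input is that $a$ is a non-zero-divisor on $E$: by biduality $E\otimes\omega_X^{-1}\simeq\D(\cM)$ is again an $n$-pure relative holonomic module with $\Chr(\D(\cM))=\CC(j_*\sO_U)\times\bbC^r$, so the purity argument from step (i) applies verbatim to $E$. Thus $\Ext^{n-1}_{\shD_X[\bw]}(\Delta^*\cM,\shD_X[\bw])=0$, $\Ext^n_{\shD_X[\bw]}(\Delta^*\cM,\shD_X[\bw])=E/aE\neq 0$, and all other Ext-groups vanish; hence $\shD_X[\bw]\bg^\bw\simeq\Delta^*\cM$ is $n$-Cohen-Macaulay.

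For step (iii), apply the right-exact functor $\Delta^*$ to $0\to\shD_X[\bs]\bff^{\bs+\mathbf 1_r}\to\shD_X[\bs]\bff^\bs\to\cM^{-\mathbf 1}_\bff\to 0$. Using the step (i) isomorphism and the identity $\bff^{\Delta(\bw)+\mathbf 1_r}=\bg^{\bw+\mathbf 1_{r-1}}$ (which follows from $\prod_i f_i=\prod_j g_j$), the image of $\Delta^*(\shD_X[\bs]\bff^{\bs+\mathbf 1_r})$ inside $\Delta^*(\shD_X[\bs]\bff^\bs)\simeq\shD_X[\bw]\bg^\bw$ is the $\shD_X[\bw]$-submodule generated by $\bg^{\bw+\mathbf 1_{r-1}}$, namely $\shD_X[\bw]\bg^{\bw+\mathbf 1_{r-1}}$. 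Therefore $\Delta^*\cM^{-\mathbf 1}_\bff\simeq\cM^{-\mathbf 1}_\bg$. Both modules are relative holonomic by Theorem \ref{thm:maisonmain}(4), so Lemma \ref{lm:suppzbf} gives $Z(B_\bff)=\supp_{\bbC[\bs]}\cM^{-\mathbf 1}_\bff$ and $Z(B_\bg)=\supp_{\bbC[\bw]}\cM^{-\mathbf 1}_\bg$. A Nakayama argument for the quotient map $\bbC[\bs]\twoheadrightarrow\bbC[\bw]$ yields the standard support base-change $\supp_{\bbC[\bw]}(\Delta^*\cM^{-\mathbf 1}_\bff)=\Delta^{-1}(\supp_{\bbC[\bs]}\cM^{-\mathbf 1}_\bff)$, producing the desired equality. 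The main delicate point of the argument is the biduality step in (ii): this is where the full Cohen-Macaulay hypothesis (rather than purity alone) is genuinely required to conclude that $a$ is a non-zero-divisor on $E$.
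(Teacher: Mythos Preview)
Your three-step architecture matches the paper's, and steps (i) and (ii) are essentially correct, though implemented differently. For the identification $\Delta^*\cM\simeq\shD_X[\bw]\bg^\bw$ you compare characteristic cycles and invoke purity to kill the kernel, whereas the paper embeds $\Delta^*(\shD_X[\bs]\bff^{\bs-\bk})$ into $\Delta^*(j_*(\sO_U[\bs]\bff^\bs))\simeq j_*(\sO_U[\bw]\bg^\bw)$ via a $3\times 3$ diagram and a direct-limit argument. For the Cohen--Macaulay transfer you unpack the adjunction and need $a$ regular on $E$ via biduality; the paper instead cites Rees' theorem to get $\Ext^{l}_{\shD_X[\bw]}(\Delta^*\cM,\shD_X[\bw])\simeq\Ext^{l+1}_{\shD_X[\bs]}(\Delta^*\cM,\shD_X[\bs])$ directly from the $(n{+}1)$-Cohen--Macaulayness over $\shD_X[\bs]$ already established, avoiding the biduality detour entirely. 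Both routes are valid; yours is more hands-on, the paper's more economical.

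There is, however, a genuine gap in step (iii). Your ``Nakayama argument'' for the support base-change
\[
\supp_{\bbC[\bw]}(\Delta^*\cM^{-\mathbf1}_\bff)=\Delta^{-1}(\supp_{\bbC[\bs]}\cM^{-\mathbf1}_\bff)
\]
does not go through as stated: $\cM^{-\mathbf1}_\bff$ is \emph{not} finitely generated over $\bbC[\bs]$, so Nakayama's lemma gives no obstruction to $a\cdot(\cM^{-\mathbf1}_\bff)_m=(\cM^{-\mathbf1}_\bff)_m$ at a maximal ideal $m\ni a$. The easy inclusion $\subseteq$ is fine, but the hard inclusion $\supseteq$ needs more. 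The paper repairs this by passing to the graded level: since $\cM^{-\mathbf1}_\bff$ is $(n{+}1)$-Cohen--Macaulay (from Lemma~\ref{lm:sescm}) and $a=s_{r-1}-s_r$ lies in no minimal prime of $B_\bff$ (by Proposition~\ref{prop:ZbfpureMf} and Corollary~\ref{cor:codime1}), one invokes \cite[Lemma~3.4.2]{BVWZ} to produce a good filtration with $a$ injective on $\gr^F_\bullet\cM^{-\mathbf1}_\bff$. Then $\gr(\cM^{-\mathbf1}_\bg)=\gr(\cM^{-\mathbf1}_\bff)/a\cdot\gr(\cM^{-\mathbf1}_\bff)$, and now Nakayama \emph{does} apply to the coherent graded module, yielding $\Chr(\cM^{-\mathbf1}_\bg)=\Chr(\cM^{-\mathbf1}_\bff)\cap V(a)$; projecting via $p_2$ and Lemma~\ref{lm:suppzbf} gives $Z(B_\bg)=\Delta^{-1}(Z(B_\bff))$. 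In short, your instinct to use Nakayama is right, but it must be applied on the associated graded, not on the module itself.
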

\begin{proof}
By construction of $\Delta$, $\Delta(\bbC^{r-1})$ is the smooth divisor $$(s_{r-1}-s_r=0).$$ 
Then for $\bbC[\bs]$-module $\cM$, we have
\[\Delta^* \cM\simeq\cM\otimes_{\bbC[\bs]}\frac{\bbC[\bs]}{(s_{r-1}-s_r)}.\]
For $k\ge 0$ and $\bk\coloneqq(\underbrace{k,k,\dots,k}_r)$, we consider the following commutative diagram
\[
\begin{tikzcd}
   &0\arrow[d] & 0\arrow[d] & 0\arrow[d]& \\
  0 \arrow[r]& \shD_X[\bs]\bff^{\bs-\bk+\mathbf 1}\arrow[r]\arrow[d, "\cdot (s_{r-1}-s_r)"] &\shD_X[\bs]\bff^{\bs-\bk}\arrow[r] \arrow[d, "\cdot (s_{r-1}-s_r)"]&\cM^{\bk,\bk-\mathbf{1}}_{\bff} \arrow[r] \arrow[d, "\cdot (s_{r-1}-s_r)"]& 0  \\
    0 \arrow[r]& \shD_X[\bs]\bff^{\bs-\bk+\mathbf 1}\arrow[r]\arrow[d] &\shD_X[\bs]\bff^{\bs-\bk}\arrow[r]\arrow[d] &\cM^{\bk,\bk-\mathbf{1}}_{\bff} \arrow[r] \arrow[d]& 0  \\
 0 \arrow[r]& \Delta^*(\shD_X[\bs]\bff^{\bs-\bk+\mathbf 1})\arrow[r] \arrow[d]&\Delta^*(\shD_X[\bs]\bff^{\bs-\bk})\arrow[r]\arrow[d] &\Delta^*(\cM^{\bk,\bk-\mathbf{1}}_{\bff}) \arrow[r] \arrow[d]& 0.  \\
        & 0&0&0&   
\end{tikzcd}
\]
Since $\shD_X[\bs]\bff^{\bs+\bk}\subseteq j_*(\sO_U[\bs])\bff^\bs$ and the later is free over $\bbC[\bs]$, $\shD_X[\bs]\bff^{\bs+\bk}$ is torsion free for every $\bk=(k,k,\dots,k)\in\Z^r$ and hence the first two columns are exact.  Since  $\shD_X[\bs]\bff^{\bs}$ is $n$-Cohen-Macaulay, so are $\shD_X[\bs]\bff^{\bs+\bk}$ by substitution. Hence, by Lemma \ref{lm:sescm}, $\cM_\bff^{\bk,\bk-\mathbf1}$ are $(n+1)$-Cohen-Macaulay. We then can apply \cite[Lemma 3.4.2]{BVWZ} and conclude that the third column is also exact. Therefore, by $3\times 3$ Lemma, the third row is also exact. 
We hence obtained a direct system of inclusions
\[\{\Delta^*(\shD_X[\bs]\bff^{\bs-\bk+\mathbf 1})\hookrightarrow\Delta^*(\shD_X[\bs]\bff^{\bs-\bk})\}_{k\ge 0}.\]
Since 
$$\lim_{k\to\infty}\shD_X[\bs]\bff^{\bs-\bk}=j_*(\sO_U[\bs])\bff^\bs,$$
using the fact that direct limit functor is exact and commute with tensor-product we have inclusions 
\[\Delta^*(\shD_X[\bs]\bff^{\bs+\mathbf 1_r})\hookrightarrow \Delta^*(\shD_X[\bs]\bff^{\bs})\hookrightarrow \Delta^*(j_*(\sO_U[\bs])\bff^\bs).\]
By Lemma \ref{lm:pbj_*} and Lemma \ref{lm:sujpb}, we thus obtain that 
$$\Delta^*(\shD_X[\bs]\bff^{\bs+\mathbf 1_r})\simeq\shD_X[\bw]\bg^{\bw+\mathbf{1}_{r-1}}\textup{ and }\Delta^*(\shD_X[\bs]\bff^{\bs})\simeq\shD_X[\bw]\bg^\bw,$$
where $\mathbf 1_r=(1,1,\dots,1)\in\Z^r$. Hence, 
\[\Delta^*(\cM_\bff^{-\mathbf 1_r})\simeq \cM_\bg^{-\mathbf 1_{r-1}}.\]
Since $\Delta^*(\shD_X[\bs]\bff^{\bs})$ is annihilated by $(s_{r-1}-s_r)$, we can apply the Rees theorem in homological algebra (see for instance \cite[Theorem 8.34]{Rotm}) and conclude that 
\[\Ext^l_{\shD_X[\bw]}(\Delta^*(\shD_X[\bs]\bff^{\bs}),\shD_X[\bw])\simeq\Ext^{l+1}_{\shD_X[\bs]}(\Delta^*(\shD_X[\bs]\bff^{\bs}),\shD_X[\bs]).\]
We consider the first column (for $k=1$) of the above $3\times3$ diagram and conclude that $\Delta^*(\shD_X[\bs]\bff^{\bs})$ is $(n+1)$-Cohen-Macaulay over $\shD_X[\bs]$ by Lemma \ref{lm:sescm}(1). Thus, $\Delta^*(\shD_X[\bs]\bff^{\bs})$ and hence $ \shD_X[\bw]\bg^{\bw}$
are $n$-Cohen-Macaulay over $\shD_X[\bw]$.

We now consider the third column of the $3\times 3$-diagram above for $k=0$. Since $\cM^{-\mathbf 1_r}_\bff$ is $(n+1)$-Cohen-Macaulay (and hence $(n+1)$-pure), we can apply Proposition \ref{prop:ZbfpureMf}. We thus conclude that $s_{r-1}-s_r$ is not contained in any minimal prime ideal containing $B_{\bff}$ by Corollary \ref{cor:codime1}. Then we consider the morphism by multiplication,
\[\cM^{-\mathbf 1_r}_\bff\xrightarrow{\cdot (s_{r-1}-s_r)}\cM^{-\mathbf 1_r}_\bff.\]
Thanks to \cite[Lemma 3.4.2]{BVWZ} again, we have a relative good filtration $F_\bullet(\cM^{-\mathbf 1_r}_\bff)$ over $F^\rel_\bullet\shD_X[\bs]$ so that 
\[\gr^F_\bullet(\cM^{-\mathbf 1_r}_\bff)\xrightarrow{\cdot (s_{r-1}-s_r)} \gr^F_\bullet(\cM^{-\mathbf 1_r}_\bff)\]
is also injective. Therefore, 
\be\label{eq:inductivechr}
\Ch^\rel(\cM_\bg^{-\mathbf1_{r-1}})=\Ch^\rel(\Delta^*(\cM^{-\mathbf1}))=\Ch^\rel(\cM^{-\mathbf1_r}))|_{s_{r-1}=s_r}.
\ee
By Lemma \ref{lm:suppzbf}, we thus have
\[Z(B_{\bg})=\Delta^{-1}(Z(B_{\bff})).\]
\end{proof}
Recently, Bath \cite[Proposition 2.29]{Bath} studied a similar specialization problem but with the Cohen-Macaulay hypothesis replaced by some geometric requirement (but more restrictive). 
\begin{proof}[Proof of Theorem \ref{thm:maindiasp}]
We inductively apply Theorem \ref{thm:diagpbcm} until $r=1$ and Theorem \ref{thm:maindiasp} follows.
\end{proof}

\section{Hyperplane arrangements}\label{sec:apptohyparr}
\subsection{A Cohen-Macaulay criterion}
We recall a Cohen-Macaulay criterion of $\shD_X[\bs]\bff^\bs$ with the help of using free divisors (in the sense of K. Saito \cite{KSaito}). Since freeness is an analytic notion, we suppose that $\bff=(f_1,\dots,f_r)$ is a $r$-tuple of holomorphic functions on $X=\bbC^n$ for some $r\ge 1$.  We keep the notation from \S \ref{sec:rr} but under the analytic setting (see Remark \ref{rmk:gaga}).

\begin{definition}\label{def:freediv}
Suppose that $h$ is a holomorphic function on $X$ and $D_h$ is the divisor $(h=0)$. Denote by $\sI_{D_h}$ the ideal sheaf of $D_h$. We let $\shTA_X(-\log D_h)$ be the sheaf of holomorphic logarithmic vector fields along $D_h$, that is , the sheaf is generated by vector fields $v$ so that $v\cdot \sI_{D_h}\subseteq \sI_{D_h}$. Then $h$, as well as $D_h$, is called free if $\shTA_X(-\log D_h)$ is a locally free $\sO_X$-module.
\end{definition}
The following theorem is first observed by Narva\'ez-Macarro \cite{NM15} when $r=1$; Maisonobe \cite{Maihyp} generalized it 
in general. 
\begin{theorem}[Maisonobe]\label{thm:freecm}
suppose that $\bff=(f_1,\dots,f_r)$ is an $r$-tuple of holomorphic functions on $X=\bbC^n$. 
If $\prod_{i=1}^r f_i$ is locally quasi-homogeneous and free, then $\shD_X[\bs]\bff^\bs$ is $n$-Cohen-Macaulay. 
\end{theorem}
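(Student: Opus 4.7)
The plan is to construct an explicit free resolution of $\shD_X[\bs]\bff^\bs$ of length $n$ via the logarithmic Spencer complex, and then combine this with the $n$-purity already supplied by Theorem \ref{thm:maisonmain}(1).

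Set $h=\prod_{i=1}^r f_i$, so $D_h$ is a free divisor by hypothesis. Working locally, pick an $\sO_X$-basis $v_1,\dots,v_n$ of $\shTA_X(-\log D_h)$. For each $v\in \shTA_X(-\log D_h)$, the quotient $v(h)/h$ is a regular function, and more generally $v(f_i)/f_i$ is regular (this is a standard consequence of freeness together with the logarithmic condition). I would first verify that the operators
\[
\theta_v \;\coloneqq\; v-\sum_{i=1}^{r} s_i\,\frac{v(f_i)}{f_i}\;\in\;\shD_X[\bs]
\]
kill $\bff^\bs$, so together with the generator $1$ they produce a surjection $\shD_X[\bs]\twoheadrightarrow \shD_X[\bs]\bff^\bs$ whose kernel contains the left ideal $J$ generated by $\theta_{v_1},\dots,\theta_{v_n}$. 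The first key step is to show $J$ equals the full annihilator, i.e.\ $\shD_X[\bs]\bff^\bs\simeq \shD_X[\bs]/J$; this is exactly the point at which local quasi-homogeneity enters, via the Calderón--Moreno--Narváez-Macarro comparison theorem identifying the logarithmic de~Rham complex with the full de~Rham complex of $\sO(\ast D_h)$.

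Next I would form the logarithmic Spencer/Koszul complex
\[
\mathrm{Sp}^\bu:\quad 0\to \shD_X[\bs]\otimes_{\sO}\wedge^{n}\shTA_X(-\log D_h)\to\cdots\to \shD_X[\bs]\otimes_{\sO}\shTA_X(-\log D_h)\to \shD_X[\bs]\to 0,
\]
whose differential is built from the $\theta_{v_i}$'s in the standard Koszul fashion (exploiting that the $\theta_{v_i}$ quasi-commute modulo lower order so that their symbols form a regular sequence). Freeness of $\shTA_X(-\log D_h)$ as an $\sO$-module makes each term a free $\shD_X[\bs]$-module of the expected rank. The core technical step, and the main obstacle, is to prove that $\mathrm{Sp}^\bu$ is acyclic and resolves $\shD_X[\bs]/J$. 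Local quasi-homogeneity guarantees the existence of an Euler-type logarithmic derivation and is precisely what is needed to carry out the symbol-level Koszul argument: after passing to $\gr^{\rel}$, the sequence of principal symbols of $\theta_{v_1},\dots,\theta_{v_n}$ is a regular sequence in $\gr^{\rel}\shD_X[\bs]\simeq\sO_{T^*X}[\bs]$, and one deduces acyclicity of $\mathrm{Sp}^\bu$ by the standard lifting argument from graded to filtered.

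Granted the resolution, $\shD_X[\bs]\bff^\bs$ has projective dimension $\le n$ over $\shD_X[\bs]$, so $\Ext^{k}_{\shD_X[\bs]}(\shD_X[\bs]\bff^\bs,\shD_X[\bs])=0$ for $k>n$. On the other hand, Theorem \ref{thm:maisonmain}(1) tells us $\shD_X[\bs]\bff^\bs$ is $n$-pure, i.e.\ $j(\shD_X[\bs]\bff^\bs)=n$, which forces the lower $\Ext^{k}$ ($k<n$) to vanish as well. Combining the two vanishing ranges gives $\Ext^{k}_{\shD_X[\bs]}(\shD_X[\bs]\bff^\bs,\shD_X[\bs])=0$ for all $k\ne n$, which is precisely the $n$-Cohen-Macaulay condition of Definition \ref{def:purecm}(4). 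The entire argument is local on $X$, which is acceptable because Cohen-Macaulayness is a local property.
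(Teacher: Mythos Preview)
Your proposal is correct and follows essentially the same route as the paper: for the case $r\ge 2$ the paper simply cites \cite[Proposition~5]{Maihyp}, which is precisely the logarithmic Spencer complex resolution you sketch, and then combines projective dimension $\le n$ with $j(\shD_X[\bs]\bff^\bs)=n$ from Theorem~\ref{thm:maisonmain}(1) to conclude $n$-Cohen--Macaulayness. One remark worth noting: the paper additionally observes that when $r=1$ the conclusion holds \emph{unconditionally} (no freeness or quasi-homogeneity needed), via a short dimension-counting argument using relative holonomicity of the $\Ext$ sheaves over $\bbC[s]$; this is not required for the theorem as stated but is a useful strengthening you do not mention.
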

\begin{proof}
This lemma indeed holds unconditionally when $r=1$. We first prove this case. By Theorem \ref{thm:maisonmain}(1), $\shD_X[s]\bff^\bs$ is $n$-pure. When $r=1$, by for instance \cite[Lemma 4.3.3(2)]{BVWZ} and Auslander regularity, we know that $$j(\mathscr{E}xt^{n+j}_{\shD_X[s]}(\shD_X[s]f^s,\shD_X[s]))>n+j.$$ If $\mathscr{E}xt^{n+j}_{\shD_X[s]}(\shD_X[s]f^s,\shD_X[s])\not= 0$ for $j>0$, then by Theorem \ref{thm:relCh}(1), $$\dim(\Ch^\rel(\mathscr{E}xt^{n+j}_{\shD_X[s]}(\shD_X[s]f^s,\shD_X[s])))<n-j+1.$$
But by \cite[Lemma 3.2.4(2)]{BVWZ}, $\mathscr{E}xt^{n+j}_{\shD_X[s]}(\shD_X[s]f^s,\shD_X[s])$ is relative holonomic over $\bbC[s]$ and hence 
\[\dim(\Ch^\rel(\mathscr{E}xt^{n+j}_{\shD_X[s]}(\shD_X[s]f^s,\shD_X[s])))\ge n.\]
This is a contradiction and hence $\mathscr{E}xt^{n+j}_{\shD_X[s]}(\shD_X[s]f^s,\shD_X[s])=0$ for $j\ge 0$, which implies that $\shD_X[s]f^s$ is $n$-Cohen-Macaulay.  

So the real content of this lemma is for the case $r\ge 2$. When $r\ge 2$, it is implied immediately by  
\cite[Proposition 5]{Maihyp}. Indeed, the Spencer complex in \cite[Proposition 5]{Maihyp} is a length-$n$ free resolution of $\shD_X[\bs]\bff^\bs$ as $\shD_X[\bs]$-modules. Hence,
\[\mathscr{E}xt^{n+j}_{\shD_X[\bs]}(\shD_X[\bs]\bff^\bs, \shD_X[\bs])=0, \textup{ for } j>0.\]
Since $j(\shD_X[\bs]\bff^\bs)=n$,  $\shD_X[\bs]\bff^\bs$ is $n$-Cohen-Macaulay. 
\end{proof}

\subsection{Proof of Theorem \ref{thm:mainrootoffha}}
Maisonobe obtained the following formula to compute the Bernstein-Sato ideals for free hyperplane arrangements. We write by $\bff_D=(f_1,f_2,\dots)$ a complete factorization of $f_D$ for a hyperplane arrangement $D$.
\begin{theorem}\cite[Th\'eor\`eme 1]{Maihyp}\label{thm:mainBffha}
Suppose that $D$ is a free hyperplane arrangement in $\bbC^n$. Then the Bernstein-Sato ideal $B_{\bff_D}$ is principal and generated by 
\[\prod_{\textup{dense} W\in L(D)}\prod_{j=0}^{2(|J(W)|-\textup{rank}(W))}(\sum_{i\in J(W)}s_i+\textup{rank}(W)+j).\]
\end{theorem}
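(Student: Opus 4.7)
The plan is to follow Maisonobe's original strategy: build the length-$n$ Spencer resolution furnished by freeness, deduce principality of $B_\bff$ from Cohen--Macaulayness, produce a first batch of linear factors using logarithmic Euler-type derivations supported near dense edges, and obtain the remaining factors together with the exact range from a Narv\'aez-Macarro style self-duality of the Spencer complex.

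First, because $D$ is free and $f_D$ is homogeneous (so locally quasi-homogeneous), Theorem~\ref{thm:freecm} supplies an explicit length-$n$ Spencer resolution of $\shD_X[\bs]\bff^\bs$ built from any global $\sO_X$-basis $\theta_1,\ldots,\theta_n$ of $\shTA_X(-\log D)$; each $\theta_k\cdot\bff^\bs=\sigma_k(\bs)\bff^\bs$ with $\sigma_k$ linear in $\bs$ (the coefficient matrix being the logarithmic Jacobian $(\theta_k(f_i)/f_i)$). Feeding the short exact sequence $0\to\shD_X[\bs]\bff^{\bs+\mathbf 1}\to\shD_X[\bs]\bff^\bs\to\cM_\bff^{-\mathbf 1}\to 0$ into Lemma~\ref{lm:sescm} upgrades the $n$-Cohen--Macaulayness of $\shD_X[\bs]\bff^\bs$ to $(n+1)$-Cohen--Macaulayness of $\cM_\bff^{-\mathbf 1}$. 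Then Proposition~\ref{prop:ZbfpureMf} and Corollary~\ref{cor:codime1} force $Z(B_\bff)$ to be a codimension-one equidimensional union of rational hyperplanes, and the unmixedness transferred from the Cohen--Macaulay property to the $\bbC[\bs]$-annihilator, combined with $\bbC[\bs]$ being a UFD (where every unmixed height-one ideal is principal), yields that $B_\bff=(b(\bs))$ is principal.

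Next I would produce the linear factors themselves. For each dense edge $W$, work in a formal neighbourhood of its generic point, using the splitting of $\shTA_X(-\log D)$ into derivations tangent to $W$ and derivations lifted from $\shTA_{X/W}(-\log D^W)$ along the transverse essential central arrangement $D^W$. Applying the Euler field of $X/W$ together with successive logarithmic derivations produces operators $P_{W,j}\in\shD_X[\bs]$ satisfying
\[P_{W,j}\cdot\bff^\bs=\Bigl(\sum_{i\in J(W)}s_i+\textup{rank}(W)+j\Bigr)\cdot Q_{W,j}\cdot\bff^{\bs+\mathbf 1},\]
giving an initial batch of linear factors of $b(\bs)$. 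The remaining factors, and the exact upper endpoint $2(|J(W)|-\textup{rank}(W))$, come from a self-duality of the Spencer complex: for reduced locally quasi-homogeneous free divisors, Narv\'aez-Macarro's functional equation $b(\bs)=\pm b(-\bs-2\cdot\mathbf 1_r)$ lifts to the relative setting as a consequence of the Cohen--Macaulayness pairing the Spencer complex with its dual. This symmetry pairs each factor $\sum_i s_i+\textup{rank}(W)+j$ with $\sum_i s_i+2|J(W)|-\textup{rank}(W)-j$ and fills out exactly the range $0\le j\le 2(|J(W)|-\textup{rank}(W))$. Equality in the displayed formula, rather than mere divisibility into $b(\bs)$, is then forced by a degree count: the top term of the Spencer resolution has rank one, yielding an upper bound on $\deg b(\bs)$ equal to $\sum_{\text{dense }W}(2(|J(W)|-\textup{rank}(W))+1)$, precisely the degree of Maisonobe's product.

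The main obstacle is the self-duality step: lifting the scalar-coefficient Narv\'aez-Macarro functional equation to the $\shD_X[\bs]$-module setting with enough precision both to produce the upper-half factors and to match the degree count. Steps 1--2 are bookkeeping on the explicit Spencer complex, and the dense-edge factors of Step 3 can be produced by hand; what is genuinely delicate is ruling out extraneous linear factors, which is exactly what the self-duality provides and which uses freeness rather than just Cohen--Macaulayness of $\shD_X[\bs]\bff^\bs$.
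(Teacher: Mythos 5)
You should first note that the paper does not prove this statement at all: it is quoted verbatim as Maisonobe's result \cite[Th\'eor\`eme 1]{Maihyp} and used as a black box (together with Theorem \ref{thm:maindiasp} and Theorem \ref{thm:freecm}) to deduce Theorem \ref{thm:mainrootoffha}. So the only meaningful comparison is with Maisonobe's own argument, and while your sketch gestures at the right circle of ideas (the Spencer resolution coming from freeness and local quasi-homogeneity, Cohen--Macaulayness, a Narv\'aez-Macarro type symmetry), it has genuine gaps at the two places where the real work lies.

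First, principality. Cohen--Macaulayness of $\shD_X[\bs]\bff^\bs$, via Lemma \ref{lm:sescm} and Proposition \ref{prop:purezerob}, only tells you that the \emph{zero locus} $Z(B_\bff)$ is equidimensional of codimension one; it says nothing about the primary structure of the ideal $B_\bff$ itself. "Unmixedness transferred from the Cohen--Macaulay property to the $\bbC[\bs]$-annihilator" is precisely the assertion you would need to prove, and it does not follow from anything stated: an ideal such as $(s_1^2,\,s_1s_2)$ has pure codimension-one zero locus but is not principal, and purity of the module does not by itself rule out this behaviour for its $\bbC[\bs]$-annihilator. Maisonobe's principality is obtained by an actual computation of the ideal, not by this general principle. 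Second, the identification of the factors. Writing down operators $P_{W,j}$ realizing every factor $\sum_{i\in J(W)}s_i+\textup{rank}(W)+j$ for all $0\le j\le 2(|J(W)|-\textup{rank}(W))$ is the heart of the computation and is merely asserted; more seriously, nothing in your argument excludes extraneous components, neither hyperplanes with other slopes $L$ (this requires control of the relative characteristic variety, cf. Corollary \ref{cor:codime1} and Maisonobe's results on $\Ch^\rel$) nor extra constants $\alpha$ along the slopes $\sum_{i\in J(W)}s_i$. The proposed mechanism for this, a multivariable functional equation $b(\bs)=\pm b(-\bs-2\cdot\mathbf 1)$ plus a degree bound "because the top term of the Spencer resolution has rank one," does not work as stated: the multivariable symmetry for free arrangements is itself a nontrivial theorem (essentially part of what Maisonobe and later Bath prove, so invoking it here is close to circular), and the rank of the last term of the Spencer complex gives no bound on the degree of a generator of $B_\bff$, let alone one matching $\sum_{\textup{dense }W}\bigl(2(|J(W)|-\textup{rank}(W))+1\bigr)$. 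As written, the proposal establishes divisibility of a generator by some of the listed factors at best, not the stated equality.
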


Combining Theorem \ref{thm:maindiasp}, Theorem \ref{thm:freecm} and Theorem \ref{thm:mainBffha} together, we conclude Theorem \ref{thm:mainrootoffha}.

\subsection{Zero loci of Bernstein-Sato ideals for hyperplane arrangements}
The following lemma is a special case of \cite[Lemma 6.4]{Budur}.
\begin{lemma}\label{lm:mchyp}
Let $\bff=(f_1,\dots,f_r)$ be a complete factorization of an irreducible, essential, central hyperplane arrangement $f$. Then 
\[\sum_{i=1}^r s_i+k=0\]
defines an irreducible component of $Z(B_{\bff})$ for some $k\in \Z_{>0}$.
\end{lemma}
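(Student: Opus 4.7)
The plan is to locate an irreducible component of $\Chr(\cM^{-\mathbf 1}_\bff)$ lying over the conormal $T^*_{\{0\}}X$ to the deepest edge, and to use the Euler homogeneity of $f$ to force its image under $p_2$ to be a hyperplane of the form $(\sum_i s_i+k=0)$.

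\textbf{Step 1 (denseness of $\{0\}$).} Because $D$ is essential and irreducible, $\{0\}$ is a dense edge of $L(D)$ with $\textup{rank}(\{0\})=n$ and $J(\{0\},\bff)=\{1,\dots,r\}$. By \cite[Proposition 2.6]{STV}, the multiplicity of $T^*_{\{0\}}X$ in $\Ch(j_*\sO_U)$ equals $|(-1)^{n-1}\chi(\P(X)\setminus\bigcup_i\P(D_i))|>0$, so by Theorem \ref{thm:maisonmain}(2) the cycle $T^*_{\{0\}}X\times\bbC^r$ appears in $\Chr(\shD_X[\bs]\bff^{\bs+\ba})$ with this positive multiplicity for every $\ba\in\Z^r$.

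\textbf{Step 2 (Euler restricts the slope).} Since each $f_i$ is linear, the Euler field $E=\sum_j x_j\partial_{x_j}$ satisfies $E(f_i)=f_i$, so $\vartheta:=E-\sum_i s_i$ annihilates $\bff^\bs$. For weight-homogeneous $P\in\shD_X$ of Euler weight $w$, a commutator computation gives $\vartheta(P\bff^\bs)=[E,P]\bff^\bs=w\cdot P\bff^\bs$, so $\cM^{-\mathbf 1}_\bff$ splits as a $\bbC[\bs]$-module into integer-weight $\vartheta$-eigenspaces. Any piece of $\cM^{-\mathbf 1}_\bff$ set-theoretically supported on $\{0\}\subset X$ is, by Kashiwara's equivalence, a sum of copies of $\shD_X/\shD_X(x_1,\dots,x_n)\otimes_\bbC\bbC[\bs]$, on which $E$ acts by $-(|\alpha|+n)$ on $[\partial^\alpha]$. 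Combining with $E=\vartheta+\sum_i s_i$ then forces the $\bbC[\bs]$-support of each $\vartheta$-eigenspace to lie in a hyperplane $(\sum_i s_i+k=0)$ with $k\in\Z$. Hence any top-dimensional component of $\Chr(\cM^{-\mathbf 1}_\bff)$ of the form $T^*_{\{0\}}X\times H$ must have $H=(\sum_i s_i+k=0)$ for some $k\in\Z$.

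\textbf{Step 3 (existence of such a component).} By Theorem \ref{thm:maisonmain}(3) and Corollary \ref{cor:codime1}, $\Chr_{n+r-1}(\cM^{-\mathbf 1}_\bff)$ is nonempty and consists of components $T^*_{W_i}X\times H_i$ for edges $W_i\in L(D)$. I claim $W_i=\{0\}$ must occur among these. Otherwise, in a microlocal neighborhood of $T^*_{\{0\}}X\times\bbC^r$ the short exact sequence $0\to\shD_X[\bs]\bff^{\bs+\mathbf 1}\to\shD_X[\bs]\bff^\bs\to\cM^{-\mathbf 1}_\bff\to 0$ would have vanishing third term; iterating this fact for the ascending chain $\{\shD_X[\bs]\bff^{\bs-k\mathbf 1}\}_{k\ge 0}$, whose direct limit is $j_*(\sO_U[\bs]\bff^\bs)$ by Theorem \ref{thm:j_*loc}, would force $\shD_X[\bs]\bff^\bs\simeq j_*(\sO_U[\bs]\bff^\bs)$ microlocally at the origin, contradicting the positive multiplicity at $T^*_{\{0\}}X$ from Step 1 (this multiplicity is precisely what obstructs $j_*(\sO_U[\bs]\bff^\bs)$ from being $\shD_X[\bs]$-coherent at the deepest stratum).

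Combining Steps 1--3 yields a component $T^*_{\{0\}}X\times(\sum_i s_i+k=0)$ of $\Chr(\cM^{-\mathbf 1}_\bff)$; Corollary \ref{cor:codime1} gives $k>0$ and Step 2 gives $k\in\Z$, so $k\in\Z_{>0}$; and Lemma \ref{lm:suppzbf} then identifies $(\sum_i s_i+k=0)$ as an irreducible component of $Z(B_\bff)$. \textbf{Main obstacle:} Step 3 is the technical crux -- rigorously producing the component at $T^*_{\{0\}}X$ requires carefully tracking how the shift $\bff^{\bs+\mathbf 1}\subsetneq\bff^\bs$ cannot be trivialized microlocally at the origin, and the clean way to do this is via a $V$-filtration or Euler--Koszul argument along the deepest dense edge.
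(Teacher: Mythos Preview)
The paper does not prove this lemma at all; it simply records it as a special case of \cite[Lemma 6.4]{Budur} and uses it as a black box. Your attempt is therefore a genuinely independent argument, and Steps~1 and~2 are essentially correct (in Step~2 the Kashiwara-equivalence description should read $i_+M$ for a finitely generated $\bbC[\bs]$-module $M$, not necessarily free, but the eigenvalue bookkeeping survives this).

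The real problem is Step~3, and the gap you flag there is fatal as written. Your contradiction hinges on microlocalizing at the generic point $p$ of $T^*_{\{0\}}X\times\bbC^r$ and arguing that the ascending chain $\shD_X[\bs]\bff^{\bs-\bk}$ cannot stabilize there. But $p$ has dimension $n+r$, while $\Chr(\cM^{-\mathbf 1}_\bff)$ has dimension $n+r-1$ by Theorem~\ref{thm:maisonmain}(3); hence $(\cM^{-\mathbf 1}_\bff)_p=0$ \emph{unconditionally}, regardless of whether any component $T^*_{\{0\}}X\times H$ is present. Equivalently, Theorem~\ref{thm:maisonmain}(2) already tells you that all $\shD_X[\bs]\bff^{\bs-\bk}$ have the \emph{same} multiplicity at $p$, so the chain always stabilizes microlocally at $p$ and no contradiction with Step~1 can be extracted from this. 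The non-coherence of $j_*(\sO_U[\bs]\bff^\bs)$ is a global phenomenon in the $\bbC[\bs]$-direction (Theorem~\ref{thm:j_*loc} shows it becomes coherent after any localization in $\bs$), not something visible at the generic microlocal point over $\{0\}$.

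To rescue Step~3 you would need to work one dimension down, i.e.\ at a generic point of $T^*_{\{0\}}X\times H$ for an \emph{unknown} hyperplane $H$, which is circular; or else import an external input guaranteeing that the conormal of a dense edge must contribute. This is exactly what Budur's Lemma~6.4 supplies via the topological description of $Z(B_\bff)$ through Sabbah's specialization and cohomology support loci of rank-one local systems. Absent that input (or the $V$-filtration / Euler--Koszul machinery you mention), your outline does not close.
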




We denote for a factorization $\bff=(f_1,\dots,f_r)$ of a hyperplane arrangment $f$ and for $\ba\in \Z^r$ 
\[\cN_\ba\coloneqq \dfrac{\shD_X[\bs]\bff^{\bs-\ba}}{\sum_{i=1}^r\shD_X[\bs]\bff^{\bs-\ba+\bee_i} }.\]
\begin{lemma}\label{lm:sumb}
Let $\bff=(f_1,\dots,f_r)$ be a complete factorization of a central hyperplane arrangment $f$. If a subset of all irreducible components of $f$ gives a coordinate system of $X=\bbC^n$, then  
$$\sum_{i=1}^r(s_i-a_i)+n\in B_{\cN_\ba}.$$
\end{lemma}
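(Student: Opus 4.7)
The plan is to exhibit the polynomial $\sum_{i=1}^r(s_i-a_i)+n$ as an annihilator of the cyclic generator $\bff^{\bs-\ba}$ of $\cN_\ba$ by an explicit Euler-type computation, and then to conclude from the centrality of $\bbC[\bs]$ in $\shD_X[\bs]$ that it annihilates all of $\cN_\ba$. After a linear change of coordinates on $X=\bbC^n$, I may assume that the subset of $\bff$ forming a coordinate system is $f_1,\dots,f_n$, so that $\partial_{f_j}$ are well-defined commuting derivations with $[\partial_{f_j},f_j]=1$. Since $f$ is central, each remaining factor $f_k$ (for $k>n$) is a linear combination $f_k=\sum_{j=1}^n c_{kj} f_j$, and Euler's relation gives $\sum_{j=1}^n f_j\partial_{f_j}(f_k)=f_k$; the same identity holds trivially for $k\le n$.

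First I would apply $E\coloneqq\sum_{j=1}^n f_j\partial_{f_j}$ to $\bff^{\bs-\ba}$. By the chain rule,
\[
E(\bff^{\bs-\ba})=\sum_{k=1}^r(s_k-a_k)\,\frac{\sum_{j=1}^n f_j\partial_{f_j}(f_k)}{f_k}\,\bff^{\bs-\ba}=\Bigl(\sum_{k=1}^r(s_k-a_k)\Bigr)\bff^{\bs-\ba}.
\]
Then I would rewrite $E=\sum_{j=1}^n\partial_{f_j}f_j-n$ using $[\partial_{f_j},f_j]=1$, which yields
\[
\sum_{j=1}^n\partial_{f_j}\bigl(f_j\,\bff^{\bs-\ba}\bigr)=\Bigl(\sum_{k=1}^r(s_k-a_k)+n\Bigr)\bff^{\bs-\ba}.
\]
Because $f_j\bff^{\bs-\ba}=\bff^{\bs-\ba+\bee_j}$ for $j=1,\dots,n$, the left-hand side lies in $\sum_{i=1}^r\shD_X[\bs]\bff^{\bs-\ba+\bee_i}$.

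Hence, in $\cN_\ba$, the element $\bigl(\sum_{k=1}^r(s_k-a_k)+n\bigr)\cdot\bff^{\bs-\ba}$ is zero. Since $\cN_\ba$ is a quotient of $\shD_X[\bs]\bff^{\bs-\ba}$ and is therefore generated as a $\shD_X[\bs]$-module by the image of $\bff^{\bs-\ba}$, and since $\bbC[\bs]$ sits in the center of $\shD_X[\bs]$, the polynomial $\sum_{k=1}^r(s_k-a_k)+n$ annihilates all of $\cN_\ba$, so it belongs to $B_{\cN_\ba}$. No real obstacle arises: the argument is a one-line Euler identity, made possible precisely by the two hypotheses that the selected $f_j$'s are linearly independent coordinates and that each $f_k$ is linear and hence Euler-homogeneous.
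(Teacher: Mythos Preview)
Your proof is correct and follows essentially the same Euler-operator argument as the paper: apply $E=\sum_{j=1}^n f_j\partial_{f_j}$ to $\bff^{\bs-\ba}$, use homogeneity to get $\sum_k(s_k-a_k)$, then commute to $\sum_j\partial_{f_j}f_j-n$ and read off membership in $\sum_i\shD_X[\bs]\bff^{\bs-\ba+\bee_i}$. The only cosmetic difference is that the paper first treats $\ba=\mathbf{0}$ and then substitutes, whereas you work with general $\ba$ directly and make the centrality step explicit.
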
 
\begin{proof}
Without loss of generality we can assume that $(f_1,f_2,\dots,f_n)$ give a coordinate system of $\bbC^n$, which we rename to $w_i$ to avoid confusion.
One first observes that 
\[0 = (\sum_{i=1}^r s_i - \sum_{j=1}^n w_i \pa_{w_i})\bff^{\bs}= (\sum_{i=1}^r s_i + n - \sum_{j=1}^n  \pa_{w_i} w_i)\bff^{\bs} \]
Hence
\[
  (\sum_{i=1}^r s_i + n) \bff^\bs = \sum_{i=1}^n \partial_{w_i}\bff^{\bs+\bee_i}
\]
Thus, 
\[\sum_{i=1}^rs_i+n\in B_{\cN_{\mathbf 0}}.\]
For the general case, one substitutes $s_i$ by $s_i-a_i$.

\end{proof}

\begin{theorem}\label{thm:main1}
Let $\bff=(f_1,\dots,f_r)$ be a complete factorization of an irreducible, essential, central hyperplane arrangement $f$. Then $\kappa(\mathbf 1)=\kappa(\mathbf1,i)=n$, where 
$\mathbf 1=(1,1,\dots,1)\in\Z^r$. In particular,
$$\sum_{i=1}^r s_i+n=0$$
defines a component of $Z_{r-1}(B_\bff)$.
\end{theorem}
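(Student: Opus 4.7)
The plan is to split $\kappa(\mathbf{1}) = \kappa(\mathbf{1}, i) = n$ into an upper bound $\kappa(\mathbf{1}, i) \le n$, established through the quotient $\cN_{\mathbf{0}} := \shD_X[\bs]\bff^\bs/\sum_i \shD_X[\bs]\bff^{\bs+\bee_i}$ featured in Lemma~\ref{lm:sumb}, and a matching lower bound, established via Lemma~\ref{lm:mchyp} combined with Proposition~\ref{prop:nonjump}. For the upper bound, the essentiality of $f$ lets us pick $n$ of the forms $f_1, \ldots, f_r$ forming a coordinate system on $X = \bbC^n$, so Lemma~\ref{lm:sumb} with $\ba = \mathbf{0}$ places $\sum_j s_j + n$ in $B_{\cN_{\mathbf{0}}}$. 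I would then verify: (i) $\cN_{\mathbf{0}} \neq 0$, since $\bigcap_i D_i = \{0\}$ for essential central $f$ blocks $1 \in (f_1, \ldots, f_r)\shD_X[\bs]$ locally at the origin; (ii) $\supp_X \cN_{\mathbf{0}} = \{0\}$, since away from the origin some $f_i$ is a unit and $\shD_X[\bs]\bff^\bs = \shD_X[\bs]\bff^{\bs+\bee_i}$ locally; and (iii) $\cN_{\mathbf{0}}$ is relative holonomic as a quotient of the relative holonomic $\cM^{-\mathbf{1}}_\bff$ (Theorem~\ref{thm:maisonmain}(4)) in the abelian category of such modules. These combined with Lemma~\ref{lm:suppzbf} give $\Chr(\cN_{\mathbf{0}}) \subseteq T^*_{\{0\}}X \times (\sum_j s_j + n = 0)$; a dimension count via Theorem~\ref{thm:relCh}(1) and $j(\cM^{-\mathbf{1}}_\bff) = n+1$ (Theorem~\ref{thm:maisonmain}(3)) then upgrades this inclusion to an equality of varieties, yielding $Z(B_{\cN_{\mathbf{0}}}) = (\sum_j s_j + n = 0)$. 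Since $\cN_{\mathbf{0}}$ is a quotient of $\cM^{-\bee_i}_\bff$ for each $i$, the containment $B_\bff^{-\bee_i} \subseteq B_{\cN_{\mathbf{0}}}$ forces $(\sum_j s_j + n = 0) \subseteq Z_{r-1}(B_\bff^{-\bee_i})$, giving $\kappa(\mathbf{1}, i) \le n$ for every $i$.

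For the lower bound, Lemma~\ref{lm:mchyp} supplies $k \in \Z_{>0}$ with $(\sum_j s_j + k = 0)$ a component of $Z_{r-1}(B_\bff)$, so $\mathbf{1} \in S_\bff$ and $\kappa(\mathbf{1}) < \infty$. I would then apply Proposition~\ref{prop:nonjump} with $L = \mathbf{1}$: for any integer $\alpha$ with $0 < \alpha < \kappa(\mathbf{1})$, the proposition forces $\cM^{-\bee_i}_{\bff, q} = 0$ at $q = (\sum_j s_j + \alpha)$, excluding $(\sum_j s_j + \alpha = 0)$ from $Z_{r-1}(B_\bff^{-\bee_i})$. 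Combining this with the upper bound and the integrality of the $L = \mathbf{1}$-slopes—read off from the conormal decomposition of $\Ch(j_*\sO_U)$ in Theorem~\ref{thm:maisonmain}(2), using that the only edge $W$ with $J(W, \bff) = \{1, \ldots, r\}$ in the essential irreducible case is $W = \{0\}$, so the corresponding hyperplanes take the form $(\sum_j s_j + \alpha = 0)$ with $\alpha \in \Z$—forces $\kappa(\mathbf{1}) = n$. Since $\kappa(\mathbf{1}) = \min_i \kappa(\mathbf{1}, i)$ with each $\kappa(\mathbf{1}, i) \le n$, we conclude $\kappa(\mathbf{1}, i) = n$ for every $i$, and the ``in particular'' statement follows from Corollary~\ref{cor:codime1}.

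The main obstacle I anticipate is the dimension-match step that promotes $\Chr(\cN_{\mathbf{0}}) \subseteq T^*_{\{0\}}X \times (\sum_j s_j + n = 0)$ to an equality: this is the $(n+1)$-purity statement for $\cN_{\mathbf{0}}$, which does not follow automatically from $\cN_{\mathbf{0}}$ being a quotient of the module $\cM^{-\mathbf{1}}_\bff$ with $j$-number $n+1$. I would address it via the $\Ext$ long exact sequence applied to $0 \to K \to \cM^{-\mathbf{1}}_\bff \to \cN_{\mathbf{0}} \to 0$, using the $n$-purity of each $\shD_X[\bs]\bff^{\bs+\ba}$ from Theorem~\ref{thm:maisonmain}(1) and the inductive device of Lemma~\ref{lm:sescm}.
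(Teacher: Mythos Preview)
Your lower-bound argument does not establish $\kappa(\mathbf 1)\ge n$. Applying Proposition~\ref{prop:nonjump} at integers $\alpha$ with $0<\alpha<\kappa(\mathbf 1)$ is tautological: by the very definition of $\kappa(\mathbf 1,i)$, the hyperplane $(\sum_j s_j+\alpha=0)$ is already absent from $Z_{r-1}(B_\bff^{-\bee_i})$ for such $\alpha$, so this step adds nothing. The ``integrality'' you then invoke cannot be read off from Theorem~\ref{thm:maisonmain}(2): that result says $\CC^\rel(\shD_X[\bs]\bff^{\bs+\ba})=\CC(j_*\sO_U)\times\bbC^r$, with no constraint whatsoever on the $\bbC^r$-factor, and tells you nothing about which offsets $\alpha$ occur in $Z_{r-1}(B_\bff^{-\bee_i})$. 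So nothing rules out $\kappa(\mathbf 1)<n$. Your upper-bound route also stalls at the point you flagged: Lemma~\ref{lm:sescm} needs Cohen--Macaulay hypotheses on $K$ and $\cM^{-\mathbf 1}_\bff$ that are not available for a general arrangement, and $n$-purity of the modules $\shD_X[\bs]\bff^{\bs+\ba}$ does not propagate to the needed $(n{+}1)$-purity of $\cN_{\mathbf 0}$.

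The paper avoids both problems with a single contradiction, and the key difference is \emph{where} Proposition~\ref{prop:nonjump} is applied. One assumes $\kappa(\mathbf 1)\neq n$ and localizes at $q=(\mathbf 1\cdot\bs+\kappa(\mathbf 1))$. Applying Proposition~\ref{prop:nonjump} with $\alpha=\kappa(\mathbf 1)$ (so the condition reads $\mathbf 1\cdot\ba<0$) gives $\shD_X[\bs]_q\bff^{\bs+\bee_i}=\shD_X[\bs]_q\bff^{\bs+\mathbf 1}$ for every $i$, hence $\cN_{\mathbf 0,q}=\cM^{-\mathbf 1}_{\bff,q}$. Since $\mathbf 1\cdot\bs+n$ annihilates $\cN_{\mathbf 0}$ by Lemma~\ref{lm:sumb} and is a unit in $\bbC[\bs]_q$, one gets $\cN_{\mathbf 0,q}=0$; but $\cM^{-\mathbf 1}_{\bff,q}\neq 0$ by the definition of $\kappa(\mathbf 1)$, a contradiction. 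This never requires knowing $\dim\Chr(\cN_{\mathbf 0})$ or that $\cN_{\mathbf 0}\neq 0$---the localization and the identification $\cN_{\mathbf 0,q}=\cM^{-\mathbf 1}_{\bff,q}$ do all the work.
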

\begin{proof}
We first prove that $\kappa({\mathbf 1})=n$. By Lemma \ref{lm:mchyp}, we know $0<\kappa(\mathbf 1)<\infty$.
Suppose $\kappa(\mathbf1) \neq n$ and let $q$ be the prime ideal generated by $\mathbf 1\cdot \bs+\kappa(\mathbf1)$. Then we have $ \cM_{\bff,q}^{-\mathbf1}\not=0.$ Since $\mathbf 1\cdot \bs+n\not\in q$, and it annihilates $\cN_{\bf 0}$ by Lemma \ref{lm:sumb}, hence $\cN_{\mathbf 0,q}=0.$ 
However, thanks to Proposition \ref{prop:nonjump}, taking $\alpha = \kappa(\mathbf1)$, we have
\[\shD_X[\bs]_{q}\bff^{\bs+\bee_i}=\shD_X[\bs]_{q}\bff^{\bs+\mathbf 1}, \quad  \forall i=1,\cdots,r.\]
Hence $\cN_{\mathbf 0,q}=\cM^{-\mathbf 1}_{\bff,q}.$
which is a contradiction. Hence $\kappa({\mathbf 1})=n$. 
$$(s_1+\dots+s_r+n=0)$$
is a component of $Z_{r-1}(B_\bff)$.

By symmetry and Proposition \ref{prop:CCdec}, one can easily see $\kappa(\mathbf1)=\kappa(\mathbf1,i)$ for all $i$.
\end{proof}
\begin{proof}[Proof of Theorem \ref{thm:maingndcf}]
We first deal with the case that $f$ is central, essential and irreducible. For simplicity we write $\mathbf1_l=(\underbrace{1,1,\dots,1}_l,0,0,\dots,0)\in \Z^r.$ For $0<l\le r$,  we consider $\cM^{-\mathbf1_{l-1},-\mathbf1_l}_\bff$, which is a subquotient of $\cM_\bff$.  By Theorem \ref{thm:main1}, we know 
\[(\sum_{j=1}^rs_j+n=0)\subset Z(B_\bff^{\bee_i}).\]
By substitution, we hence know for $0<l\le r$
\[(\sum_{j=1}^rs_j+n+l-1=0)\subset Z(B_\bff^{-\mathbf1_{l-1},-\mathbf1_l}).\]
By Proposition \ref{prop:CCdec}, we hence know 
\[(\sum_{j=1}^rs_j+n+l=0)\subset Z(B_\bff)\]
for $0\le l<r$. Since $f$ is essential, one observes that the component $(\sum_{j=1}^rs_j+n+l=0)$ is supported at the origin of $\bbC^n$, that is, it is not a component of $Z(B_f)$ over neighborhood away from the origin. 

In general, we choose a dense edge and set 
$$\bff_W=(f_j)_{j\in J(W,\bff)} \textup{ and } f_W=\prod_{j\in J(W,\bff)}f_j.$$
We then consider the complete factorization $\bff_W$ on $X/W$. Since $f_W$ is central, essential and irreducible on $X/W$,
\[(\sum_{j\in J(W,\bff)}s_j+\textup{rank}(W)+l=0)\subset Z(B_{\bff_W})\]
for $0\le l<|J(W,\bff)|$ and each component $(\sum_{j\in J(W,\bff)}s_j+\textup{rank}(W)+l=0)$ is supported on $W$. Moreover, by definition one can see that $B_\bff$ and $B_{\bff_W}$ are the same over $X\setminus \cup_{D_j\not\supset W}D_j$. Since the component $(\sum_{j\in J(W,\bff)}s_j+\textup{rank}(W)+l=0)$ of $Z(B_{\bff_W})$ is supported on $W$, we have 
\be\label{eq:aabbcc11}
(\sum_{j\in J(W,\bff)}s_j+\textup{rank}(W)+l=0)\subset Z(B_{\bff})
\ee
for $0\le l<|J(W,\bff)|$.  
\end{proof}

\subsection{Chacracteristic cycles for hyperplane arrangements}\label{subsec:5.4}
Suppose that $\bff$ is a complete factorization of a central essential irreducible hyperplane arrangement $f$ on $X=\bbC^n$ with $\supp(f=0)=D$. The canonical log resolution of $(X,D)$ is the morphism $\mu\colon Y\to X$ obtained by composition of the blowups along (the proper transform of) the union of dense edges in the increasing order of dimensions of dense edges. The canonical log resolution $\mu$ is a log resolution \cite[Theorem 3.1]{STV}. We write 
\[\tilde\bff=\mu^*\bff, \tilde f=\mu^*f \textup{ and } \widetilde j\colon U=X\setminus D\hookrightarrow Y.\]
We denote 
\[\supp(\mu^{*}D)=\sum_{i\in S} E_i\textup{ and } E_I=\bigcap_{i\in I} E_i \textup{ for } I\subseteq S.\]
We set $q$ to be the prime ideal generated by $\sum_{j=1}^r s_j+1$ in $\bbC[\bs]$.

Now, we study the relative characteristic cycle $\CC^\rel(\cM_\bff)$.
\begin{lemma}\label{lm:upstairmf}
With notations as above, we have:
\begin{enumerate}
    \item $\widetilde j_*(\sO_U[\bs]{\tilde f}^\bs)_q\simeq\shD_Y[\bs]_q{\tilde f}^\bs$;
    \item $\widetilde j_!(\sO_U[\bs]{\tilde f}^\bs)_q\simeq\shD_Y[\bs]_q{\tilde f}^{\bs+\mathbf1_r}$;
    \item for a general point $\bal=(\alpha_1,\alpha_2,\dots,\alpha_r)\in(\sum_{j=1}^r s_j+1=0)\subset \bbC^r$, the $\shD_Y$-module $\cM_{\tilde\bff,m}$ is supported on $E$ and the $\shD_Y$-module $\iota^*(\cM_{\tilde\bff,m})$ are holonomic and supported on $E$ with
    \[\CC(\iota^*(\cM_{\tilde\bff,m}))=\sum_{E_I\subseteq E} T^*_{E_I}Y,\]
    where $E$ is the exceptional divisor over the origin $0\in X=\bbC^n$, $m$ is the maximal ideal of $\bal$ in $\bbC[\bs]$ and $\iota\colon \bbC\hookrightarrow \bbC^r$ the closed embedding defined by 
    \[s\mapsto (s,\alpha_2,\alpha_3,\dots,\alpha_r);\]
    \item $\CC^\rel(\cM_{\tilde \bff,q})=\sum_{E_I\subseteq E} T^*_{E_I}Y\times (\sum_{j=1}^r s_j+1=0).$ 
\end{enumerate}
\end{lemma}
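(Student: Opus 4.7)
Plan:

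The strategy is to work locally on $Y$ using the SNC structure of $\supp(\mu^*D)$ on the canonical log resolution. At every point $y\in Y$ one may choose coordinates $x_1,\ldots,x_n$ in which each $\tilde f_j=u_j\prod_i x_i^{a_{ij}}$ with $u_j$ a unit and $a_{ij}\in\{0,1\}$, and the slope vectors $L_i=(a_{ij})_{j=1}^r\in\Z_{\ge 0}^r$ are explicit: $L_{E_W}=\sum_{j\in J(W)}\bee_j$ for an exceptional divisor over a dense edge $W$, and $L_{\tilde D_j}=\bee_j$ for a proper transform. Since $f$ is essential and irreducible, $\{0\}$ is the unique dense edge with $J(W)=\{1,\ldots,r\}$, so the only $L_i$ that is a positive multiple of $\mathbf 1:=\sum_j\bee_j$ is $L_{E_{\{0\}}}=\mathbf 1$.

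For (1) and (2), I would establish iteratively, for every $k\in\{1,\ldots,r\}$, that
\[\shD_Y[\bs]_q\tilde\bff^{\bs-\bee_k}=\shD_Y[\bs]_q\tilde\bff^\bs,\]
and, for every $\bk\ge\mathbf 1_r$, that $\shD_Y[\bs]_q\tilde\bff^{\bs+\bk}=\shD_Y[\bs]_q\tilde\bff^{\bs+\mathbf 1_r}$; combined with Theorem \ref{thm:j_*loc} and Theorem \ref{thm:j_!loc} this gives the desired equalities with $\tilde j_*$ and $\tilde j_!$. The key local identity is, modulo unit corrections,
\[\pa_{x_i}^{a_{ik}}\tilde\bff^{\bs'}=P_{i,k}(\bs')\cdot x_i^{-a_{ik}}\tilde\bff^{\bs'},\qquad P_{i,k}(\bs')=\prod_{l=0}^{a_{ik}-1}(L_i\cdot\bs'-l),\]
and combining these over all $i$ passing through the point one recovers $\tilde\bff^{\bs'-\bee_k}$ from $\tilde\bff^{\bs'}$ provided each $P_{i,k}(\bs')$ is invertible in $\bbC[\bs]_q$. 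A linear factor $L_i\cdot\bs'-l$ can be a nonzero scalar multiple of $\sum_{j=1}^r s_j+1$ only when $L_i=\mathbf 1$; the residual integrality condition on $l$ is then out of range for both the shift $\bs'=\bs$ needed in (1) and every shift $\bs'=\bs+\ba$ with $\ba\ge\mathbf 1_r$ needed in (2), so every $P_{i,k}$ is invertible at $q$.

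For (3) and (4), parts (1) and (2) identify $\cM_{\tilde\bff,q}=\tilde j_*(\sO_U[\bs]\tilde\bff^\bs)_q/\tilde j_!(\sO_U[\bs]\tilde\bff^\bs)_q$. By Corollary \ref{cor:codime1}, $\Ch^\rel_{n+r-1}(\cM_{\tilde\bff})$ is a finite union of divisors $\Lambda_{L,\alpha}\times(L\cdot\bs+\alpha=0)$, and after localization at $q=(\sum_{j=1}^r s_j+1)$ only the component with $L=\mathbf 1$, $\alpha=1$ survives, so the task reduces to identifying the Lagrangian $\Lambda_{\mathbf 1,1}\subset T^*Y$. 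Analyzing the SNC local model at an arbitrary stratum $E_I$, the twisted relative local system has trivial monodromy precisely along $E_{\{0\}}$ (since $L_{E_{\{0\}}}=\mathbf 1$ is the only slope parallel to $\mathbf 1$) and generic non-trivial monodromy along every other $E_i\in I$; the standard Deligne-extension description of the cokernel $\tilde j_*/\tilde j_!$ for a rank-one twisted local system on an SNC complement then produces a conormal contribution $T^*_{E_I}Y$ with multiplicity one exactly when the configuration of components in $I$ yields a stratum lying over the origin, that is, when $E_I\subseteq E=\mu^{-1}(0)$. This gives the cycle formula in (4), and (3) follows by specializing the relative cycle at the closed point $\bal$ of the hyperplane in $\Spec\bbC[\bs]_q$.

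The main obstacle is the fine characteristic-cycle computation in (4): while (1) and (2) reduce to the clean local invertibility checks above, and Corollary \ref{cor:codime1} constrains the shape of $\Ch^\rel_{n+r-1}(\cM_{\tilde\bff,q})$, identifying $\Lambda_{\mathbf 1,1}$ as $\sum_{E_I\subseteq E}T^*_{E_I}Y$ with every multiplicity equal to one requires careful Deligne-extension bookkeeping at mixed-monodromy SNC strata together with a combinatorial verification that the strata lying over the origin are exactly those appearing on the right-hand side.
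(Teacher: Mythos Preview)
Your approach to (1) and (2) is essentially the paper's: both rest on the local SNC computation of $B_{\tilde\bff}$ and the observation that the only slope vector proportional to $\mathbf 1$ is $L_{E_{\{0\}}}=\mathbf 1$. The paper phrases this as ``by local computation $B_{\tilde\bff}$ is principal and reduced, and $\sum_j s_j+l$ divides the generator only for $1\le l\le r$,'' which is exactly what your explicit functional equations establish.

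For (3) and (4) the routes diverge. The paper does \emph{not} pass through the identification $\cM_{\tilde\bff,q}\simeq\tilde j_*/\tilde j_!$ and Deligne-extension bookkeeping. Instead it first invokes Theorem~\ref{thm:freecm} (SNC divisors are free and locally quasi-homogeneous) to conclude that $\shD_Y[\bs]\tilde\bff^\bs$, and hence $\cM_{\tilde\bff}$ via Lemma~\ref{lm:sescm}, is Cohen--Macaulay. This is the key structural input you omit. With Cohen--Macaulayness in hand, the paper simply reads off $\CC^\rel(\cM_{\tilde\bff,m})$ directly from the SNC local model at $m$, obtaining (4), and then uses the Cohen--Macaulay hypothesis a second time---via the argument proving \eqref{eq:inductivechr} in Theorem~\ref{thm:diagpbcm}---to justify both $\L\iota^*(\cM_{\tilde\bff,m})\simeq\iota^*(\cM_{\tilde\bff,m})$ and $\CC^\rel(\iota^*\cM)=\iota^*\CC^\rel(\cM)$. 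Holonomicity of $\iota^*(\cM_{\tilde\bff,m})$ over $\shD_Y$ is then deduced from the fact that it is annihilated by a power of $(s-\alpha_1)$ (Lemma~\ref{lm:suppzbf}), so a good filtration over $\shD_Y$ is already good over $\shD_Y[s]$.

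Your plan has a genuine gap at exactly this specialization step. Saying ``(3) follows by specializing the relative cycle at the closed point $\bal$'' presumes that $\CC$ commutes with $\iota^*$, which is false in general; one needs the multiplication-by-$(s_i-\alpha_i)$ maps to be injective on the associated graded for a suitable good filtration, and the paper secures this precisely through the Cohen--Macaulay property of $\cM_{\tilde\bff}$ together with \cite[Lemma 3.4.2]{BVWZ}. Your Deligne-extension computation, even if carried out, would give the characteristic cycle of the \emph{specialized} module $j_*\tilde\bff^{\bal}/j_!\tilde\bff^{\bal}$ at a single point $\bal$; lifting this back to the relative cycle over $(\sum s_j+1=0)$, and identifying that specialized module with $\iota^*(\cM_{\tilde\bff,m})$ in the first place, both require the same Cohen--Macaulay/flatness input. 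Once you add that ingredient your route and the paper's essentially converge, but without it the derivation of (3) is incomplete.
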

\begin{proof}
Since $\mu^* D$ is normal crossing, $\shD_X[\bs]\tilde\bff^\bs$ is Cohen-Macaulay and hence $\cM_{\tilde\bff}$ is also Cohen-Macaulay by Lemma \ref{lm:sescm} and we can calculate Bernstein-Sato ideals for $\tilde\bff$ locally around any analytic neighborhood.
Furthermore, by local computation, one can see that $B_{\tilde\bff}$ (on an analytic neighborhood) is principal, that its generator is reduced, and that $\sum_{j=1}^r s_j+l$ is a factor of the generator only when $1\le l\le r.$ Therefore, by substitution
\be\label{eq:allzeroY}
\cM_{\tilde\bff,q}^{\bk_r, \bk_r-\mathbf1_r }=\frac{\shD_Y[\bs]_q\tilde\bff^{\bs-\bk_r}}{\shD_Y[\bs]_q\tilde\bff^{\bs-\bk_r+\mathbf1_r}}=0
\ee
for $k\not= 0$, where $\bk_r=(\underbrace{k,k,\dots,k}_r)$. Hence, Part (1) follows. Using the inclusion 
\[\widetilde j_*(\sO_U[\bs]{\tilde f}^\bs)_q\hookrightarrow \widetilde j_!(\sO_U[\bs]{\tilde f}^\bs)_q\]
in Theorem \ref{thm:j_!loc} and duality, we see that $j_!(\sO_U[\bs]{\tilde f}^\bs)_q$ is the minimal extension of $\sO_U[\bs]_q\tilde \bff^\bs$. By minimality and \eqref{eq:allzeroY} for $k<0$, Part (2) follows.

Now we prove Part (3) and Part (4). We pick a general point $\bal$ on $(\sum_{j=1}^r s_j+1=0)$ and write by $m$ its maximal ideal. Since $\bal$ is general, we can assume $\bal$ is away from the components of $Z(B_{\tilde\bff})$ other than $(\sum_{j=1}^rs_j+1=0)$ (with the local $B_{\tilde\bff}$) and hence $\cM_{\tilde\bff,m}$ is supported on $E$.
We now consider the embedding
\[\iota\colon \bbC=\Spec \bbC[s]\hookrightarrow \bbC^r\]
defined by 
\[s\mapsto (s,\alpha_2,\alpha_3,\dots,\alpha_r).\]
Since $\cM_{\tilde\bff}$ is Cohen-Macaulay, one can apply the method proving \eqref{eq:inductivechr} in the proof of Theorem \ref{thm:diagpbcm} (inductively using $s_i-\alpha_i=0$ to cut $\bbC^r$ for $i\ge 2$) and obtain
\be\label{eq:projCCrel}
\L\iota^*(\cM_{\tilde\bff,m})\stackrel{q.i.}{\simeq}\iota^*(\cM_{\tilde\bff,m})\textup{ and }\CC^\rel(\iota^*(\cM_{\tilde\bff,m}))=\iota^{*}(\CC^\rel(\cM_{\tilde\bff,m})).
\ee
Since $\mu^*D$ is normal crossing, we can calculate the relative characteristic cycles of $\cM_{\tilde\bff}$ and $\cM_{\tilde\bff,m}$ explicitly and obtain 
\[
\CC^\rel(\cM_{\tilde \bff,m})=\sum_{E_I\subseteq E} T^*_{E_I}Y\times (\sum_{j=1}^r s_j+1=0)\subseteq T^*X\times \Spec\bbC[\bs]_m.
\]
In particular, Part (4) follows.
We also obtain
\[\CC^\rel(\iota^*(\cM_{\tilde \bff,m}))=\sum_{E_I\subseteq E} T^*_{E_I}Y\times (s-\alpha_1)\subseteq T^*X\times \Spec\bbC[s]_{\bar m}.\]
where $\bar m$ is the maximal ideal of $\alpha_1\in\bbC$. Thanks to Lemma \ref{lm:suppzbf}, by considering its Bernstein-Sato ideal over $\bbC[\bs]_{\bar m}$, $\iota^*(\cM_{\tilde \bff,m})$ is annihilated by $(s-\alpha_1)^m$ for some $m\ge 1$. Hence, $\iota^*(\cM_{\tilde \bff,m})$ is coherent over $\shD_Y$ and thus a good filtration of $\iota^*(\cM_{\tilde \bff,m})$ over $\shD_Y$ is also good over $\shD_Y[s]$. We further conclude 
\[\CC(\iota^*(\cM_{\tilde \bff,m}))=\sum_{E_I\subseteq E} T^*_{E_I}Y\subset T^*Y\]
and that $\iota^*(\cM_{\tilde \bff,m})$ is a holonomic $\shD_Y$-module (it is indeed regular holonomic). 

\end{proof}

\begin{lemma}\label{lm:mupfj*}
For every $\bal\in\bbC^r$, we have 
\begin{enumerate}
    \item $\mu_+(\widetilde j_*(\sO_U[\bs]{\tilde f}^\bs)_m)\stackrel{q.i.}{\simeq} j_*(\sO_U[\bs]{f}^\bs)_m$
    \item $\mu_+(\widetilde j_!(\sO_U[\bs]{\tilde f}^\bs)_m)\stackrel{q.i.}{\simeq} j_!(\sO_U[\bs]{f}^\bs)_m$
    \item $\mu_+(\cM_{\tilde\bff,m})\stackrel{q.i.}{\simeq} \frac{j_*(\sO_U[\bs]{f}^\bs)_m}{j_!(\sO_U[\bs]{f}^\bs)_m},$
    where $m$ is the maximal ideal of $\bal$ in $\bbC[\bs]$.
\end{enumerate}
\end{lemma}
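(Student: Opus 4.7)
The plan is to prove (1) directly, deduce (2) via duality, and derive (3) from a short exact sequence on $Y$. The common reduction invokes Theorems \ref{thm:j_*loc} and \ref{thm:j_!loc}: for the maximal ideal $m$ of $\bal$ and $k\gg 0$, one has $\widetilde j_*(\sO_U[\bs]\tilde f^\bs)_m \simeq \shD_Y[\bs]_m\tilde f^{\bs-\bk}$ and $\widetilde j_!(\sO_U[\bs]\tilde f^\bs)_m \simeq \shD_Y[\bs]_m\tilde f^{\bs+\bk}$, and analogously on $X$, all being $n$-Cohen--Macaulay coherent relative $\shD$-modules.

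For (1), the claim becomes $\mu_+(\shD_Y[\bs]_m\tilde f^{\bs-\bk}) \simeq \shD_X[\bs]_m f^{\bs-\bk}$ for $k\gg 0$. First I would establish the degree-$0$ identification: the factorization $j = \mu\circ\widetilde j$ gives $\mu_*\widetilde j_* = j_*$ at the level of ordinary sheaves, so $\mu_*(\shD_Y[\bs]_m\tilde f^{\bs-\bk}) = \mu_*\widetilde j_*(\sO_U[\bs]\tilde f^\bs)_m = j_*(\sO_U[\bs]f^\bs)_m = \shD_X[\bs]_m f^{\bs-\bk}$ as $\shD_X[\bs]_m$-modules. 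What remains is the vanishing of higher $\mu_+$ on $\shD_Y[\bs]_m\tilde f^{\bs-\bk}$; this would rely on the Cohen--Macaulay property of that module (via Theorem \ref{thm:freecm} applied to the free, locally quasi-homogeneous normal crossing divisor $\mu^{-1}D$) together with a Grauert--Riemenschneider-type vanishing along the positive-codimension fibers of the birational proper morphism $\mu$.

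Part (2) follows formally from (1) by duality: Theorem \ref{thm:pshfdualcmm} gives $\D_X\circ\mu_+ = \mu_+\circ\D_Y$ and $j_! = \D_X\circ j_*\circ\D_U$ by definition, so applying (1) to the dual local system $\D_U((\sO_U[\bs]f^\bs)_m)$ on $U$ yields
\[\mu_+\widetilde j_!(\sO_U[\bs]\tilde f^\bs)_m = \D_X\mu_+\widetilde j_*\D_U(\cdots)_m = \D_X j_*\D_U(\cdots)_m = j_!(\sO_U[\bs]f^\bs)_m.\]
For (3), I would apply $\mu_+$ to the short exact sequence $0\to\widetilde j_!(\sO_U[\bs]\tilde f^\bs)_m\to\widetilde j_*(\sO_U[\bs]\tilde f^\bs)_m\to\cM_{\tilde\bff,m}\to 0$ on $Y$. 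The resulting distinguished triangle, combined with the degree-$0$ concentration supplied by (1) and (2) and the injectivity of the natural $j_!\to j_*$ map on $X$ (Theorem \ref{thm:j_!loc}), forces $\mu_+\cM_{\tilde\bff,m}$ into degree $0$ via the long exact cohomology sequence and identifies it with $j_*(\sO_U[\bs]f^\bs)_m/j_!(\sO_U[\bs]f^\bs)_m$.

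The hard part will be the degree-$0$ concentration of $\mu_+$ on $\shD_Y[\bs]_m\tilde f^{\bs-\bk}$. In the classical absolute setting this is the well-known vanishing for twisted log $\shD$-modules under a log resolution, but the relative version over $\bbC[\bs]_m$ is more delicate. The cleanest route is probably to verify the vanishing at every closed point $\bal$ via base change along $\iota\colon\Spec\bbC\to\Spec\bbC[\bs]$ (Proposition \ref{prop:relbasechange}), reducing to the classical statement fiberwise, and then to upgrade to the localized statement using coherence and the Cohen--Macaulay property of $\shD_Y[\bs]_m\tilde f^{\bs-\bk}$.
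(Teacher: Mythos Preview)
Your arguments for (2) and (3) coincide with the paper's: (2) via commutation of $\D$ with $\mu_+$ (Theorem~\ref{thm:pshfdualcmm}) applied to the definition of $\widetilde j_!$, and (3) from the distinguished triangle on $Y$ together with the injectivity of $j_!\to j_*$ from Theorem~\ref{thm:j_!loc}.

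For (1), however, you are working far harder than necessary. The paper dispatches it in one line: since $\mu$ is the identity over $U$, functoriality of the $\shD$-module direct image gives $\mu_+\circ\widetilde j_+=(\mu\circ\widetilde j)_+=j_+$ as derived functors. Both $\widetilde j$ and $j$ are affine open embeddings (complements of divisors), so their $\shD$-module pushforwards agree with the underived sheaf pushforwards; hence $\widetilde j_+(\sO_U[\bs]\bff^\bs)=\widetilde j_*(\sO_U[\bs]\tilde f^\bs)$ and $j_+(\sO_U[\bs]\bff^\bs)=j_*(\sO_U[\bs]f^\bs)$ are already sheaves, and the functoriality identity \emph{is} the desired quasi-isomorphism in degree~$0$. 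Localization at $m$ commutes with $\mu_+$ because $\bbC[\bs]$ is central. No Grauert--Riemenschneider-type vanishing, no fiberwise base change, and no Cohen--Macaulay input are required; the step you flag as ``the hard part'' simply does not arise once you recognize the source module as itself a pushforward from $U$. Your detour through Theorems~\ref{thm:j_*loc}--\ref{thm:j_!loc} and an explicit higher-vanishing argument could presumably be completed, but it obscures this one-line reason. Note also that your degree-$0$ identification conflates the sheaf-theoretic $\mu_*$ with $\cH^0\mu_+$; these differ by the transfer bimodule, so even that preliminary step would need additional justification under your approach.
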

\begin{proof}
Since $\mu$ is identical over $U$, Part (1) is obvious. 

By Theorem \ref{thm:pshfdualcmm}, we know that $\D$ and $\mu_+$ commute. Thus, Part (2) follows from the definition of $\widetilde j_!$ (cf. \S\ref{subsec:j_*j_!}).

Since $\mu_+$ is an exact derived functor, by Lemma \ref{lm:upstairmf} (1) and (2) we have a distinguish triangle
\[\mu_+(\widetilde j_!(\sO_U[\bs]{\tilde f}^\bs)_q)\to \mu_+(\widetilde j_*(\sO_U[\bs]{\tilde f}^\bs)_q)\to \mu_+(\cM_{\tilde\bff,q})\xrightarrow{+1}.\]
Taking the associated long exact sequence of cohomology sheaves, by Part (1) and Part (2) that we have just proved, we obtain a long exact sequence 
\[0\to \cH^{-1}(\mu_+(\cM_{\tilde\bff,q}))\rightarrow j_!(\sO_U[\bs]{\tilde f}^\bs)_q\rightarrow j_*(\sO_U[\bs]{\tilde f}^\bs)_q\rightarrow \cH^{0}(\mu_+(\cM_{\tilde\bff,q})\to0.\]
By Theorem \ref{thm:j_!loc},
\[j_!(\sO_U[\bs]{\tilde f}^\bs)_q\rightarrow j_*(\sO_U[\bs]{\tilde f}^\bs)_q\]
is injective. Thus, Part (3) follows. 
\end{proof}
\begin{proof}[Proof of Theorem \ref{thm:mainmchr}]
We first prove the case that $f$ is central essential and irreducible. By substitution, it is enough to assume $l=1$. We pick a general point $\bal\in (\sum_{j=1}^r s_j+1=0)$, and write $m$ the maximal ideal of $\bal$ in $\bbC[\bs]$. By Lemma \ref{lm:mchyp}, $(\sum_{j=1}^rs_j+1=0)$ is a component of $Z(B_\bff^{\bk_r,-\bk_r})$ for all $k\gg1$. By Theorem \ref{thm:j_*loc} and Theorem \ref{thm:j_!loc}, we have 
\[\cM_{\bff,q}^{\bk_r,-\bk_r}=\dfrac{\shD_X[\bs]_q\bff^{\bs-\bk_r}}{\shD_X[\bs]_q\bff^{\bs+\bk_r}}=\dfrac{j_*(\sO_U[\bs]{f}^\bs)_q}{j_!(\sO_U[\bs]{f}^\bs)_q}\]
and 
\[\cM_{\bff,m}^{\bk_r,-\bk_r}=\dfrac{\shD_X[\bs]_m\bff^{\bs-\bk_r}}{\shD_X[\bs]_m\bff^{\bs+\bk_r}}=\dfrac{j_*(\sO_U[\bs]{f}^\bs)_m}{j_!(\sO_U[\bs]{f}^\bs)_m}\]
for all $k\gg1$. Since $\bbC[\bs]_q$ is a DVR and hence a PID, $B_{\bff,q}^{\bk_r,-\bk_r}$ is generated by $(\sum_{j=1}^rs_j+1=0)^m$ for some integer $m\ge 1$. 
Upstairs, $\cM_{\tilde\bff,m}$ is supported on $E$ as a sheaf on $Y$ by Lemma \ref{lm:upstairmf}(3). By Lemma \ref{lm:mupfj*}(3), $\dfrac{j_*(\sO_U[\bs]{f}^\bs)_m}{j_!(\sO_U[\bs]{f}^\bs)_m}$ is thus supported at the origin $0\in X$ and hence so are $\cM_{\bff,m}^{\bk_r,-\bk_r}$ for all $k\gg 1$. Therefore, by Lemma \ref{lm:suppzbf} and relative holonomicity, 
\[T^*_{\{0\}}X\times (\sum_{j=1}^rs_j+1=0)\]
is a component of $\Ch^\rel(\cM_{\bff}^{\bk_r,-\bk_r})$ for every $k\gg 1$. We assume that its multiplicity is $\ell$. Then since relative characteristic cycles localize, 
\[\CC^\rel(\cM_{\bff,m}^{\bk_r,-\bk_r})=\CC^\rel(\dfrac{j_*(\sO_U[\bs]{f}^\bs)_m}{j_!(\sO_U[\bs]{f}^\bs)_m})=\ell\cdot T^*_{\{0\}}X\times (\sum_{j=1}^rs_j+1=0).\]
Combining Theorem \ref{thm:j_*loc}, Theorem \ref{thm:j_!loc} and Lemma \ref{lm:sescm}, we see that $\dfrac{j_*(\sO_U[\bs]{f}^\bs)_m}{j_!(\sO_U[\bs]{f}^\bs)_m}$ is Cohen-Macaulay. Similar to \eqref{eq:projCCrel}, we obtain
\[\L\iota^*(\cM_{\bff,m}^{\bk_r,-\bk_r})\stackrel{q.i.}{\simeq}\iota^*(\cM_{\bff,m}^{\bk_r,-\bk_r})\textup{ and }\CC^\rel(\iota^*(\cM_{\bff,m}^{\bk_r,-\bk_r}))=\iota^{*}(\CC^\rel(\cM_{\bff,m}^{\bk_r,-\bk_r}))\]
for $k\gg1$. Similar to the way we prove that $\iota^*(\cM_{\tilde\bff,m})$ is coherent over $\shD_Y$ in the proof of Lemma \ref{lm:upstairmf} (3), we can also obtain that $\iota^*(\cM_{\bff,m}^{\bk_r,-\bk_r})$ is coherent over $\shD_X$ for every $k\gg1$. Therefore,
\[\CC(\iota^*(\cM_{\bff,m}^{\bk_r,-\bk_r}))=\ell\cdot T^*_{\{0\}}X.\]

We now consider the following commutative diagram
\[
\begin{tikzcd}
E \arrow[r,"\eta_E"]\arrow[d,"\mu^E"] & Y\arrow[d,"\mu"] \\
\{0\}\arrow[r,"\eta"] & X.
\end{tikzcd}
\]
By Lemma \ref{lm:upstairmf}(3), $\iota*(\cM_{\tilde\bff,m})$ is a holonomic $\shD_Y$-module supported on $E$, by Kaishiwara's equivalence (see for instance \cite[Theorem 1.6.1]{HTT}),
\[\iota^*(\cM_{\tilde\bff,m})\simeq \eta_{E,+}\cN_E, \textup{ and } \CC(\cN_E)=\sum_{E_I\subseteq E} T^*_{E_I}E\subset T^*E.\]
for some holonomic $\shD_E$-module $\cN_E$. Similarly, 
\[\iota^*(\cM_{\bff,m}^{\bk_r,-\bk_r})\simeq \eta_+\cN_{\{0\}}\]
for some $\bbC$-vector space $\cN_{\{0\}}$ of dimension $\ell$. Since 
\[\L\iota^*(\cM_{\bff,m}^{\bk_r,-\bk_r})\stackrel{q.i.}{\simeq}\iota^*(\cM_{\bff,m}^{\bk_r,-\bk_r})\textup{ and } \L\iota^*(\cM_{\tilde\bff,m})\stackrel{q.i.}{\simeq}\iota^*(\cM_{\tilde\bff,m}),\]
by Proposition \ref{prop:relbasechange} and Lemma \ref{lm:mupfj*}(3), we have 
\[\iota^*(\cM_{\bff,m}^{\bk_r,-\bk_r})\simeq \mu_+(\iota^*(\cM_{\tilde\bff,m}))\]
for $k\gg 1$ and hence 
\be\label{eq:muEqi}
\mu^E_+(\cN_E)\stackrel{q.i.}\simeq \cN_{\{0\}}.
\ee
We now apply the Dubson-Kashiwara index theorem (see for instance \cite[Theorem 9.1]{Gil} and also \cite[Theorem 1.6]{WZ} in the log case) and obtain that 
\[\sum_{i}(-1)^ih^i(\mu_+^E(\cN_{E)}))=T^*_{E}E\cdot\sum_{E_I\subseteq E} T^*_{E_I}E\]
where $h^i$ denote the dimension of the $i$-th cohomology and the intersection number on the right hand side is the degree of the zero cycle in $T^*_EE\simeq E$.
It is well known that 
\[T^*_EE\cdot\sum_{E_I\subseteq E} T^*_{E_I}E=(-1)^{n-1}\chi(E^o),\]
where $E^o=E\setminus \bigcup_{E_I\subsetneq E}E_I$. By the construction of $\mu$, 
\[E^o= \P(X)\setminus \bigcup_{H\in D^{\{0\}}}\P(H).\]
By \eqref{eq:muEqi}, we then know the dimension of $\cN_{\{0\}}$ and hence $\ell$ are both $$(-1)^{n-1}\chi(\P(X)\setminus \bigcup_{H\in D^{\{0\}}}\P(H)).$$

We thus have proved the case that $f$ is central essential and irreducible. In general, one can pick a dense edge $W$ and replace $f$ by $f_W$ and $X$ by $X/W$. Since $X\to X/W$ is smooth, we get $T^*_WX$ by pulling back $T^*_{\{0\}}(X/W)$ and the general case follows.  
\end{proof}

\bibliographystyle{amsalpha}
\bibliography{mybib}

\newcommand{\etalchar}[1]{$^{#1}$}
\providecommand{\bysame}{\leavevmode\hbox to3em{\hrulefill}\thinspace}
\providecommand{\MR}{\relax\ifhmode\unskip\space\fi MR }
\providecommand{\MRhref}[2]{%
  \href{http://www.ams.org/mathscinet-getitem?mr=#1}{#2}
}
\providecommand{\href}[2]{#2}
\begin{thebibliography}{BVWZ20}

\bibitem[Bat19]{Bath}
Daniel Bath, \emph{Combinatorially determined zeroes of bernstein-sato ideals
  for tame and free arrangements}, arXiv preprint arXiv:1909.00547 (2019).

\bibitem[Bj93]{Bj}
Jan-Erik Bj\"ork, \emph{Analytic {${\mathcal D}$}-modules and applications},
  Mathematics and its Applications, vol. 247, Kluwer Academic Publishers Group,
  Dordrecht, 1993. \MR{1232191}

\bibitem[BMT11]{BMT}
Nero Budur, Mircea Musta\c{t}\u{a}, and Zach Teitler, \emph{The monodromy
  conjecture for hyperplane arrangements}, Geom. Dedicata \textbf{153} (2011),
  131--137. \MR{2819667}

\bibitem[BS05]{BSaito}
Nero Budur and Morihiko Saito, \emph{Multiplier ideals, {V}-filtration, and
  spectrum}, J. Algebraic Geometry \textbf{14} (2005), 269--282.

\bibitem[Bud15]{Budur}
Nero Budur, \emph{Bernstein-sato ideals and local systems}, Annales de
  l'Institut Fourier \textbf{65} (2015), no.~2, 549--603 (en).

\bibitem[BVWZ19]{BVWZ}
Nero Budur, Robin Veer, Lei Wu, and Peng Zhou, \emph{Zero loci of
  {B}ernstein-{S}ato ideals}, arXiv preprint arXiv:1907.04010 (2019).

\bibitem[BVWZ20]{BVWZ2}
\bysame, \emph{Zero loci of {B}ernstein-{S}ato ideals {II}}, arXiv preprint
  arXiv:2001.05728v1 (2020).

\bibitem[DGPS19]{DGPS}
Wolfram Decker, Gert-Martin Greuel, Gerhard Pfister, and Hans Sch\"onemann,
  \emph{{\sc Singular} {4-1-2} --- {A} computer algebra system for polynomial
  computations}, \url{http://www.singular.uni-kl.de}, 2019.

\bibitem[ELS{\etalchar{+}}04]{ELSVD}
Lawrence Ein, Robert Lazarsfeld, Karen~E Smith, Dror Varolin, et~al.,
  \emph{Jumping coefficients of multiplier ideals}, Duke Mathematical Journal
  \textbf{123} (2004), no.~3, 469--506.

\bibitem[FK00]{FK}
J.~Franecki and M.~Kapranov, \emph{The {G}auss map and a noncompact
  {R}iemann-{R}och formula for constructible sheaves on semiabelian varieties},
  Duke Math. J. \textbf{104} (2000), no.~1, 171--180. \MR{1769729}

\bibitem[Gin86]{Gil}
Victor Ginsburg, \emph{Characteristic varieties and vanishing cycles}, Invent.
  Math. \textbf{84} (1986), no.~2, 327--402. \MR{833194}

\bibitem[Gyo93]{Gyo}
Akihiko Gyoja, \emph{Bernstein-sato’s polynomial for several analytic
  functions}, Journal of Mathematics of Kyoto University \textbf{33} (1993),
  no.~2, 399--411.

\bibitem[HTT08]{HTT}
Ryoshi Hotta, Kiyoshi Takeuchi, and Toshiyuki Tanisaki, \emph{{$D$}-modules,
  perverse sheaves, and representation theory}, Progress in Mathematics, vol.
  236, Birkh\"{a}user Boston, Inc., Boston, MA, 2008, Translated from the 1995
  Japanese edition by Takeuchi. \MR{2357361}

\bibitem[Kas77]{KasBf}
Masaki Kashiwara, \emph{{$B$}-functions and holonomic systems. {R}ationality of
  roots of {$B$}-functions}, Invent. Math. \textbf{38} (1976/77), no.~1,
  33--53. \MR{0430304}

\bibitem[KS13]{KSbook}
Masaki Kashiwara and Pierre Schapira, \emph{Sheaves on manifolds: With a short
  history {\guillemotleft}{L}es d{\'e}buts de la th{\'e}orie des
  faisceaux{\guillemotright}. {B}y {C}hristian {H}ouzel}, vol. 292, Springer Science
  \& Business Media, 2013.

\bibitem[Mac15]{NM15}
Luis~Narv{\'a}ez Macarro, \emph{A duality approach to the symmetry of
  {B}ernstein--{S}ato polynomials of free divisors}, Advances in Mathematics
  \textbf{281} (2015), 1242--1273.

\bibitem[Mai16a]{Mai}
Philippe Maisonobe, \emph{Filtration {R}elative, l'{I}d{\'e}al de {B}ernstein
  et ses pentes}, arXiv preprint arXiv:1610.03354 (2016).

\bibitem[Mai16b]{Maihyp}
\bysame, \emph{L'{I}d{\'e}al de {B}ernstein d'un arrangement libre d'hyperplans
  lin{\'e}aires}, arXiv preprint arXiv:1610.03356 (2016).

\bibitem[Rot08]{Rotm}
Joseph~J Rotman, \emph{An introduction to homological algebra}, Springer
  Science \& Business Media, 2008.

\bibitem[Sab87]{Sab2}
Claude Sabbah, \emph{Proximit\'{e} \'{e}vanescente. {II}. \'{E}quations
  fonctionnelles pour plusieurs fonctions analytiques}, Compositio Math.
  \textbf{64} (1987), no.~2, 213--241. \MR{916482}

\bibitem[Sai80]{KSaito}
Kyoji Saito, \emph{Theory of logarithmic differential forms and logarithmic
  vector fields}, J. Fac. Sci. Univ. Tokyo Sect. IA Math \textbf{27} (1980),
  no.~2, 265--291.

\bibitem[Sai16]{SaitoHyp}
Morihiko Saito, \emph{Bernstein--{S}ato polynomials of hyperplane arrangements},
  Selecta Mathematica \textbf{22} (2016), no.~4, 2017--2057.

\bibitem[STV95]{STV}
Vadim Schechtman, Hiroaki Terao, and Alexander Varchenko, \emph{Local systems
  over complements of hyperplanes and the {K}ac-{K}azhdan conditions for
  singular vectors}, J. Pure Appl. Algebra \textbf{100} (1995), 93--102.

\bibitem[Wal17]{Wal}
Uli Walther, \emph{The {J}acobian module, the {M}ilnor fiber, and the
  {$D$}-module generated by {$f^s$}}, Invent. Math. \textbf{207} (2017), no.~3,
  1239--1287. \MR{3608290}

\bibitem[WZ19]{WZ}
Lei Wu and Peng Zhou, \emph{Log {D}-modules and index theorems}, arXiv preprint
  arXiv:1904.09276 (2019).

\end{thebibliography}
\end{document}